%% file: Acounting.tex
\documentclass[12pt]{amsart}
\input{style}

\title{Counting Surface Subgroups in Cusped Hyperbolic 3-Manifolds}
\author{Xiaolong Hans Han}
\address{Center for Mathematics and Interdisciplinary Sciences, Fudan University, Shanghai, 200433, China\newline\indent
Shanghai Institute for Mathematics and Interdisciplinary Sciences (SIMIS), Shanghai, 200433, China}
\email{xhh@simis.cn}

\author{Zhenghao Rao}
\address{Department of Mathematics, Rutgers University--New Brunswick, Piscataway, NJ 08854, USA}
\email{zhenghao.rao@rutgers.edu}

\author{Jia Wan}
\address{Department of Mathematics, University of Wisconsin--Madison, Madison, WI 53706, USA}
\email{wan44@math.wisc.edu}

\date{}

\begin{document}

\begin{abstract}
Let $M =\mathbb{H}^3/\Gamma$ be a finite-volume, noncompact hyperbolic $3$-manifold.  We show that the number of quasi-Fuchsian surface subgroups of $\Gamma$ (up to conjugacy and commensurability) of genus at most $g$ is bounded both above and below by functions of the form $(cg)^{2g}$. As a corollary, for all $h\geq 4$, the number of purely pseudo-Anosov closed surface subgroups of genus at most $g$ of the mapping class group $\mathrm{Mod}(S_{h,0})$ is bounded below by $(Cg)^{2g}$ for a universal constant $C$. In contrast, for some $g \geq 2$, we construct infinitely many conjugacy classes of genus-$g$ surface subgroups of $\Gamma$ with accidental parabolics.
\end{abstract}

\maketitle

\input{Introduction}
\input{Upper-new}
\input{Lower}
\input{pA}

\input{Coannular}

\bibliographystyle{alpha}
\bibliography{references} 
\noindent 

\noindent

\end{document}

%% file: style.tex
\usepackage{amsmath, amssymb, amscd, amsthm, amsfonts, mathrsfs, tikz-cd}
\usepackage{graphicx}
\usepackage{indentfirst}
\usepackage{thmtools,mathtools}
\usepackage{thm-restate}
\usepackage[colorlinks, bookmarks=true, backref=page]{hyperref}
\hypersetup{
	colorlinks,
	citecolor=blue,
	linkcolor=blue,
	urlcolor=blue}
\usepackage{cleveref}
\usepackage{cite}
\usepackage{verbatim}
\usepackage{chngcntr}

\usepackage{autonum}

\newtheorem{theorem}{Theorem}[section]
\newtheorem{lemma}[theorem]{Lemma}

\newtheorem{corollary}[theorem]{Corollary}

\newtheorem{lem}[theorem]{Lemma}

\newtheorem{q}[theorem]{Question}

\newtheorem{conj}[theorem]{Conjecture}
\theoremstyle{definition} 
\newtheorem{defi}[theorem]{Definition}
\newtheorem{rem}[theorem]{Remark}

\newtheorem*{remark}{Remark}

\counterwithout{equation}{section}

\newcommand{\al}{\alpha}

\DeclareMathOperator{\foot}{foot}
\DeclareMathOperator{\hl}{\mathbf{hl}}

\newcommand{\poi}{$\pi_1$-injective }
\newcommand{\poin}{$\pi_1$-injective}

\newcommand{\sys}{\tn{sys}}

\newcommand{\psltc}{\tn{PSL}(2,\C)}

\newcommand{\im}{\tn{Im}}

\newcommand{\C}{\mathbb{C}}

\newcommand{\inj}{\textnormal{inj}}

\newcommand{\me}{M_{\epsilon}}

\newcommand{\R}{\mathbb{R}}

\newcommand{\tn}{\textnormal}

\newcommand{\vol}{\textnormal{vol}}
\newcommand{\Z}{\mathbb{Z}}

\DeclareMathOperator{\PSL}{\tn{PSL}}

\newcommand{\gt}{\mathcal{G}_2}

\renewcommand{\H}{\mathbb H}

\newcommand{\from}{\colon}

\usepackage{enumitem}

\setlist{itemsep=3pt plus 1pt minus 1pt}

\makeatletter
\newcommand{\proofpart}[2]{%
  \par
  \addvspace{\medskipamount}%
  \noindent\emph{Part #1: #2}\par\nobreak
  \addvspace{\smallskipamount}%
  \@afterheading
}
\makeatother

\newcommand{\bs}{\backslash}
\newcommand{\TT}{\mathcal T}

%% file: Introduction.tex
\section{Introduction}
\subsection*{Main Results}
A hyperbolic manifold is \emph{cusped} if it is complete, noncompact, and has finite volume. Let $M=\mathbb{H}^3/\Gamma$ be a cusped hyperbolic $3$-manifold and $S_g$ a closed surface of genus $g$ with $g\geq 2$. For a continuous, $\pi_1$-injective map $f\from S_g\to M$, the image $H = f_*(\pi_1(S_g))$ is called a \emph{surface subgroup of} $\Gamma$. A surface subgroup $H$ is \emph{coannular} if $H$ contains an accidental parabolic element, i.e., there exists a non-peripheral $\alpha\in\pi_1(S_g)$ such that $f_*(\alpha)$ is parabolic in $\Gamma$. By the work of Thurston~\cite{thurston1997geometry} and Bonahon~\cite{bonahon1986bouts}, any geometrically finite surface subgroup of $\Gamma$ is either quasi-Fuchsian or coannular. 

For $g\geq2$, let $s(g,M)$ (resp.\ $n(g,M)$) denote the number of conjugacy (resp. commensurability) classes of quasi-Fuchsian surface subgroups in $\Gamma$ of genus at most $g$. Then clearly, $n(g,M)\leq s(g,M)$. Our first result is the following.
\begin{theorem}\label{thm1}
Let $M=\mathbb{H}^3/\Gamma$ be a cusped hyperbolic $3$-manifold. Then there exist positive constants $C_1,C_2$, depending only on $M$, such that for all sufficiently large $g$,
    \begin{equation}\label{main-estimate}
         (C_2g)^{2g}\leq n(g,M) \leq s(g,M)\leq (C_1g)^{2g}.
    \end{equation}
In fact, the constant $C_1$ depends only on the systole and the volume of a compact core of $M$.
\end{theorem}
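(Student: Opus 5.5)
The proof splits into the upper bound on $s(g,M)$ and the lower bound on $n(g,M)$. I take the compact case (Kahn--Marković, Masters) as the model for both.

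\textbf{Upper bound.} Each quasi-Fuchsian subgroup $H$ of genus $h\le g$ is realized by a pleated (or simplicial hyperbolic) surface $f\from S_h\to M$. Such a surface is $1$-Lipschitz and has area $2\pi(2h-2)$.

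The main obstacle is that, unlike the closed case, $f(S_h)$ may travel arbitrarily deep into the cusps, so there is no a priori bound on its diameter. I would handle this as follows. Since $H$ has no accidental parabolics, every closed curve of $S_h$ has geodesic representative of length bounded below in terms of the systole of $M$. A thin-part argument then shows that the preimage of a deep horoball neighborhood consists of disks and annuli. An annulus is excluded, because its core would be an accidental parabolic or a curve of arbitrarily short length. A disk sitting deep inside a cusp can be homotoped out of the cusp, since the cusp is Euclidean times a ray and one can project onto a horotorus, at bounded area cost. After this modification $f$ lands in a fixed compact core $M_0$, with area $O(g)$ and with the injectivity radius of the domain bounded below.

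Next I take an $\epsilon$-net of $f(S_h)$ of $O(g)$ points, with $\epsilon$ fixed by the systole of $M$. This yields a triangulation of $S_h$ with $O(g)$ vertices and bounded valence, mapped into $M_0$. Its vertices are assigned to points of a fixed finite net of $M_0$, whose size depends on $\vol(M_0)$ and the systole, and its edges are assigned to homotopy classes of bounded-length paths, of which there are finitely many. The number of such combinatorial triangulations of genus $h$ with $N=O(g)$ triangles, up to the $O(g)!$-type labelings, is at most $(cg)^{2g}$: the factorial comes from gluing patterns, as in Masters' count of pants decompositions. The edge and vertex choices contribute only $C^{O(g)}$. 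Since the conjugacy class of $H$ is determined by the combinatorial data, summing over $h\le g$ gives $s(g,M)\le (C_1 g)^{2g}$.

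\textbf{Lower bound.} I would use the cusped good pants construction of Kahn--Wright: for large $R$ there are many $(R,\epsilon)$-good pants in $M$. These can be glued along good curves with shift $\approx 1$, and any such gluing produces a quasi-Fuchsian surface group, the cusps being handled by Kahn--Wright's umbrella/correction pieces. Fixing $R$, I count the ways to glue $2g-2$ pants drawn from a fixed finite set of good pants, choosing matchings of cuffs with matching boundary curves. This gives roughly $(cg)^{2g}$ distinct surfaces, as in Kahn--Marković's lower-bound count for closed manifolds.

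To pass to commensurability classes, I would note that a commensurability class contains at most $(g')^{O(g')}$-fewer subgroups of genus at most $g$ in a controlled way. Concretely, the number of genus $\le g$ subgroups commensurable with a fixed maximal one is bounded by the count of finite-index subgroups of degree $\le g$ of a surface group, which is $\le (cg)^{g}$-ish. Dividing leaves $(C_2 g)^{2g}$ after adjusting constants. Alternatively, I would restrict to surfaces that are not covers by using a genericity argument on the gluing pattern.
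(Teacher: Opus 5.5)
\textbf{Upper bound.} Your upper bound is essentially the paper's argument. The paper triangulates the thick pleated surface with bounded valence and short edges, records the vertices in a finite cover of the compact core, and uses that the preimage of the cusps consists of disks carrying no $\pi_1$. The one step to drop is pushing those disks onto a horotorus ``at bounded area cost''. Radial projection onto a horosphere stretches horizontal lengths by $e^{t}$ at depth $t$, so you lose the short-edge property needed to read off the homotopy class. It is also unnecessary: as the paper does, simply discard the vertices landing in the cusps, since the subgraph on the remaining vertices still determines $f_*$.

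\textbf{Lower bound, first gap: the count.} Gluing $2g-2$ genuine pants drawn from a fixed finite set, with shears pinned near $1+i\pi$, yields only about $(cg)^{g}$ surfaces, not $(cg)^{2g}$. The only data are the pants types, the choice of feet, and the gluing pattern. The dual graph has $2g-2$ vertices and $3g-3$ edges. After choosing a spanning tree (exponentially many choices), you pair the $2g$ leftover cuff slots, which gives at most $C^{g}(2g-1)!!\approx (cg)^{g}$ patterns. More generally, the dual graph of any decomposition of $S_g$ into genuine pieces has rank at most $g$, so adding hamster wheels does not help. The missing factor $g^{g}$ comes from finite covers that are nontrivial over the pieces, i.e.\ from generalized components. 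A genus-$g_0$ surface group has about $2n(n!)^{2g_0-2}$ subgroups of index $n$ (M\"uller--Puchta), which is $(cg)^{2g}$ at genus $g=n(g_0-1)+1$. Covers trivial over every pair of pants contribute only about $(n!)^{g_0-1}$. Kahn--Markovi\'c's lower bound is a count of covers, not of pants gluings.

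\textbf{Lower bound, second gap: passing to commensurability classes.} Dividing is false. Finite-index subgroups of a maximal QF group $H_0$ are conjugate in $\Gamma$ only through the stabilizer of the limit set, which is $H_0$ itself. Hence a commensurability class whose maximal element has genus $2$ contains roughly $((g-1)!)^{2}\approx (g/e)^{2g}$ conjugacy classes of genus at most $g$. Dividing a count of order $(Cg)^{2g}$ by this leaves only exponential growth. The ``genericity'' alternative is not an argument; you need a construction that forces maximality. The paper does this as follows.
\begin{enumerate}
\item Take two Kahn--Wright surfaces of equal genus $g_0$ sharing a primitive, nonseparating good curve $\gamma$. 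They meet at bending angle near $\pi/2$, with wide collars about $\gamma$.
\item Pass to primitive degree-$n$ covers in which $\gamma$ has a lift of degree $k<n$.
\item Cut along these lifts and cross-glue, giving genus $\hat g=n(2g_0-2)+1$.
\item The result is QF by the bending theorem, and maximal by the Kahn--Markovi\'c argument.
\item The estimate $m_n(\Gamma_0,k)\ge (n!)^{2g_0-2}$ then gives at least $\bigl((n!)^{2g_0-2}\bigr)^{2}\ge (C\hat g)^{2\hat g}$ such subgroups.
\item Non-conjugate maximal QF subgroups are non-commensurable, since each commensurability class has a unique maximal element up to conjugacy.
\end{enumerate}
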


One application of \Cref{thm1} is to count the number of purely pseudo-Anosov closed surface subgroups in the mapping class group up to commensurability. Taking $M$ to be the figure-eight knot complement, and combining this with the results in \cite{kent2024atoroidal}, we have the following lower bound.

\begin{corollary}\label{cor3}
There exists a positive constant $C$ such that for all $h\geq 4$, the number $p(g)$ of commensurability classes of purely pseudo-Anosov closed surface subgroups of $\mathrm{Mod}(S_{h,0})$ of genus at most $g$ satisfies  
    \begin{equation}
        p(g)\geq (Cg)^{2g}.
    \end{equation}
As a consequence, the number of commensurability classes of closed, atoroidal $S_h$-bundles over surfaces of genus at most $g$ is also bounded below by $(Cg)^{2g}$.
\end{corollary}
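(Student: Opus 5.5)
We deduce \Cref{cor3} from the lower bound in \Cref{thm1} applied to the figure-eight knot complement $M=\mathbb{H}^3/\Gamma$. The bridge to mapping class groups comes from \cite{kent2024atoroidal}. As we use it, that result gives, for every $h\ge 4$, a homomorphism $\rho_h\from \Gamma\to \mathrm{Mod}(S_{h,0})$ with two properties. First, $\rho_h$ is injective on quasi-Fuchsian surface subgroups, or at least on those without accidental parabolics. Second, it sends every element of $\Gamma$ that is not conjugate into a cusp subgroup to a pseudo-Anosov. One natural construction: $M$ fibers over the circle with punctured-torus fiber, and one fills or branches to realize $\Gamma$, or a finite-index subgroup of it, inside $\mathrm{Mod}(S_{h,0})$ via a Birman-exact-sequence / point-pushing type embedding. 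If only a finite-index subgroup $\Gamma'$ is realized, we replace $M$ by the finite cover $M'=\mathbb{H}^3/\Gamma'$. This changes nothing, since \Cref{thm1} holds for $M'$ with a constant depending on $M'$. We fix $M$ (or $M'$) once for all $h$, so $C_2$ is a single constant.

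\textbf{Step 1.} Let $H\le\Gamma$ be a quasi-Fuchsian surface subgroup of genus at most $g$. Then $H$ is closed and contains no parabolics, so every nontrivial element of $H$ is loxodromic. Loxodromics are not peripheral, so $\rho_h$ sends each of them to a pseudo-Anosov. Hence $\rho_h(H)$ is purely pseudo-Anosov, and by injectivity it is a closed surface subgroup of the same genus.

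\textbf{Step 2.} We must show that commensurability classes are not collapsed too much. If $\rho_h$ is injective on all of $\Gamma$, then $\rho_h(H_1)$ and $\rho_h(H_2)$ are commensurable (up to conjugacy) in $\mathrm{Mod}(S_{h,0})$ if and only if $H_1,H_2$ are commensurable in $\Gamma$. There is one caveat: the conjugating element might lie outside $\rho_h(\Gamma)$. This is the main obstacle. I would handle it in one of two ways.
\begin{enumerate}
\item Show that $\rho_h(\Gamma)$ has finite index in its commensurator. Then each commensurability class in $\mathrm{Mod}(S_{h,0})$ meets only boundedly many classes coming from $\Gamma$.
\item Use the fact that commensurability classes of surface subgroups of genus at most $g$ number at least $(C_2g)^{2g}$. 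A bounded-to-one, or even $e^{O(g)}$-to-one, identification only changes the constant.
\end{enumerate}
Either way, $p(g)\ge n(g,M)/K\ge (C_2g)^{2g}/K\ge (Cg)^{2g}$ for large $g$. Shrinking $C$ covers the small $g$.

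\textbf{Step 3.} For the bundle statement, a purely pseudo-Anosov surface subgroup $\pi_1(S_g)\to\mathrm{Mod}(S_h)$ determines an $S_h$-bundle $E$ over $S_g$. $E$ is atoroidal exactly when the monodromy group is purely pseudo-Anosov; this is the standard Kent--Leininger criterion. Commensurable monodromy groups give commensurable bundles, and conversely, since $\pi_1(S_h)$ is the unique normal surface kernel by rigidity of fibrations for genus $h\ge 2$. So distinct commensurability classes of subgroups give distinct classes of bundles, and the same lower bound follows.
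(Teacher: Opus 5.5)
Your overall reduction matches the paper's: apply the lower bound of Theorem~\ref{thm1} to the figure-eight knot complement and push forward along Kent--Leininger's type-preserving map. Step~1 is fine. In fact injectivity is automatic, since the kernel of a type-preserving map is a normal subgroup of $\Gamma$ with no loxodromics, hence trivial.

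\textbf{The gap is Step~2.} This is where the content of the corollary lies, and neither of your options closes it.

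\emph{First option.} It is not proved, and even if granted it does not give what you claim. Suppose $\phi\,\rho_h(H_2)\,\phi^{-1}$ is commensurable with $\rho_h(H_1)$ for some $\phi\in\mathrm{Mod}(S_{h,0})$. Then $\phi$ only conjugates a finite-index subgroup of $\rho_h(H_2)$ into $\rho_h(\Gamma)$. Nothing forces $\phi$ to commensurate $\rho_h(\Gamma)$. So finiteness of $[\mathrm{Comm}(\rho_h(\Gamma)):\rho_h(\Gamma)]$ says nothing about how many $\Gamma$-classes get merged in $\mathrm{Mod}(S_{h,0})$.

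\emph{Second option.} It simply assumes a bounded (or $e^{O(g)}$)-to-one identification, which is exactly the statement that needs proof.

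As written, your argument produces many purely pseudo-Anosov surface subgroups, but not many commensurability classes of them.

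\textbf{What the paper uses instead.} It imports the missing input from the proof of Kent--Leininger's Theorem~6. The map induced by $\Delta$ from conjugacy classes of convex cocompact surface subgroups to conjugacy classes of purely pseudo-Anosov subgroups is at most $m$-to-$1$, with $m=m(h)$ after passing to the finite-index subgroup landing in $\mathrm{Mod}(S_{h,0})$. This upgrades to commensurability by a finite-intersection argument:
\begin{itemize}
\item Suppose $H_1,\dots,H_{m+1}$ are pairwise noncommensurable, but each $\rho_h(H_i)$ is commensurable with $\rho_h(H_1)$ via $\phi_i$, with $\phi_1=1$.
\item Set $L=\rho_h(H_1)\cap\bigcap_i\phi_i\rho_h(H_i)\phi_i^{-1}$ and $K_i=\rho_h^{-1}(\phi_i^{-1}L\phi_i)\le H_i$.
\item The $K_i$ are $m+1$ pairwise nonconjugate convex cocompact surface subgroups whose images are all conjugate to $L$, a contradiction.
\end{itemize}
Hence $p(g)\ge n(g,M_8)/m$.

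\textbf{Smaller points.}
\begin{itemize}
\item Your claim that a single $M'$ serves all $h$ is not supported by the cited construction. In the paper, both the finite-index subgroup and $m$ depend on $h$.
\item In Step~3, uniqueness of the fibration of a surface bundle over a surface is not a general fact; such bundles can admit several fiberings. It should not carry the bundle statement.
\item The point-pushing picture you sketch would not be type-preserving. Nonperipheral simple closed curves in the fiber are loxodromic in $M$, but their point-pushes are reducible.
\end{itemize}
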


The finite number of conjugacy classes of surface subgroups up to genus $g$ in \cite{kahn2012counting} and \Cref{thm1} may be consistent with the finite number of conjugacy classes of elements in $\pi_1(M)$ up to bounded length, but this philosophy fails in general. In a standard Euclidean $3$-torus, we can ``spin" (analogous to Dehn twists) a totally geodesic $2$-torus along an orthogonal direction to obtain infinitely many non-homotopic $2$-tori. Thurston observed this analogy and remarked~[\citenum{thurston1997geometry}, p.~203] that there are likely infinitely many conjugacy classes of coannular surface subgroups of some fixed genus in $\Gamma$ by spinning around boundary tori, which was also discussed in \cite{Baker-Cooper08} for compact surfaces with boundary. We explicitly construct such an infinite collection of coannular closed surfaces, using ideas from Cooper--Long--Reid~\cite{clrEssentialClosedSurfaceCuspedM} and Maskit's Kleinian combination theorem. 

\begin{theorem}\label{thm:coannular}
    Let $M=\mathbb{H}^3/\Gamma$ be a cusped hyperbolic $3$-manifold. 
    There exists $g\geq2$ such that $\pi_1(M)$ contains infinitely many conjugacy classes of coannular surface subgroups of genus $g$.
\end{theorem}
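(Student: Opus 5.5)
We plan to build one coannular closed surface and then produce infinitely many conjugacy classes from it by spinning around a cusp. First, by Cooper--Long--Reid~\cite{clrEssentialClosedSurfaceCuspedM}, $M$ contains a closed essential (in fact quasi-Fuchsian) surface; more usefully, their construction supplies an embedded essential compact surface $F\subset M$ with boundary on a cusp torus $T$, not a fibre or semi-fibre, whose boundary components are parallel slopes on $T$. We take two copies of $F$ (or a tubed union of parallel copies) and join them along annuli running into the cusp, as in the Cooper--Long--Reid tubing construction. This gives a closed surface $\Sigma$ of some genus $g$ containing an essential simple closed curve $\alpha$ whose image is the boundary slope $\beta$, a parabolic element. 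Maskit's combination theorem (amalgamation over the maximal parabolic cyclic subgroup $\langle \beta\rangle$, or HNN along it) then shows $H=\pi_1(\Sigma)\to\Gamma$ is injective and geometrically finite. So $H$ is a coannular surface subgroup.

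Next comes the spinning. Let $P\cong\mathbb Z^2$ be the cusp subgroup containing $\beta$, and pick $\tau\in P$ not a power of $\beta$. Write $H=A\ast_{\langle\beta\rangle}B$ (the splitting along $\alpha$). For each $n$ we set $H_n=A\ast_{\langle\beta\rangle}\tau^nB\tau^{-n}$. This is well defined since $\tau$ commutes with $\beta$, and it is again the image of $\pi_1(S_g)$, geometrically a Dehn-twist-like reglueing in which we push one side around the cusp torus. Injectivity and geometric finiteness follow from the same Maskit combination argument. The precise interactive pair uses horoball neighbourhoods: the limit sets of $A$ and $B$ lie on opposite sides of a $\beta$-invariant strip in $\partial\mathbb H^3$ based at the parabolic fixed point, and conjugating $B$ by $\tau^n$ translates within the same $P$-invariant picture. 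We can likely choose $F$ so that $A$ and $B$ are separated by $\langle\beta\rangle$-invariant disjoint regions for all translates. If the splitting is nonseparating, we use the HNN version $\langle A, t\tau^n\rangle$ analogously.

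The main obstacle is showing the $H_n$ lie in infinitely many distinct conjugacy classes. Our plan is to use an invariant that grows with $n$. Fix a non-peripheral loop $\gamma$ in $\Sigma$ crossing $\alpha$ essentially, say $\gamma=ab$ with $a\in A$, $b\in B$ loxodromic. In $H_n$ the corresponding element is $a\tau^n b\tau^{-n}$, and its translation length tends to infinity, roughly like $2\log n$, as the path travels deep into the cusp. Since the length of $\gamma_n$ is not itself conjugacy invariant, we instead use the finiteness of the number of conjugacy classes of surface subgroups admitting a marking with bounded lengths of a fixed generating set. We argue as follows. If infinitely many $H_n$ were conjugate to a single $H_{n_0}$, they would all be realized by maps $S_g\to M$ in the same homotopy class up to $\mathrm{Mod}(S_g)$. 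A pleated or simplicial hyperbolic representative of $\Sigma_{n_0}$ has bounded thick-part geometry, while a pleated representative of $\Sigma_n$ has the thick part of its image joined through the cusp by a long thin collar around $\alpha$. Thus the area of the part of the surface deep in the cusp, or equivalently the depth the minimal pleated surface must reach into the horoball, is an invariant that we expect to distinguish infinitely many classes. If this depth argument proves delicate, we will fall back on the following alternative: compare the double cosets $H\tau^n H$ in $\Gamma$, which are pairwise distinct for distinct $|n|$, and show that a conjugacy $gH_ng^{-1}=H_m$ forces $g\in P$ and then $n=m$, using that $\langle\beta\rangle$ is the unique conjugacy class of maximal parabolic subgroups of $H_n$ up to powers.
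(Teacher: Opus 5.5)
\textbf{Gap: the proposal never shows that the $H_n$ lie in infinitely many conjugacy classes.} Your construction of one coannular surface and of the family $H_n$ follows the paper's strategy: conjugating one side by $\tau^n\in P$ amounts to changing the height of one annulus. But the step that carries the theorem is not established.

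\emph{The depth/area idea cannot work.} For each $n$ you can homotope a realization of $H_n$ so that it agrees with one of $H_{n_0}$ outside an arbitrarily deep horoball neighbourhood of the cusp. The only difference is an annulus on a horotorus winding $n-n_0$ extra times in the $\tau$-direction, and its area tends to $0$ as it is pushed out. Equivalently, because $\alpha$ is an accidental parabolic, the natural pleated representatives live on $\Sigma\setminus\alpha$. Their images in $M$ are the same for every $n$, since $\tau^nB\tau^{-n}$ is conjugate to $B$ in $\Gamma$. So no minimal-geometry quantity of this kind separates the $H_n$.

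\emph{The fallback is not a proof either.}
\begin{enumerate}
\item The double-coset claim is unjustified and is not linked to conjugacy of the $H_n$.
\item Uniqueness of the conjugacy class of maximal parabolic subgroups of $H_n$ fails as soon as $F$ has two or more boundary components. The cores of the different annuli are pairwise non-isotopic in $\Sigma$, and each gives its own class.
\item Even with one boundary component, where you can reduce to a conjugator $g\in P$, the decisive step ``$g\in P$ forces $n=m$'' is only asserted. It needs a genuine invariant, such as the width of the smallest $\langle\beta\rangle$-invariant strip containing the limit set.
\end{enumerate}

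\textbf{The paper's route: a homological invariant.} Spinning around a cusp torus $T$ adds $[T]$ to the class of the surface in $H_2(\widehat M;\mathbb{Z})$, so the $k$-th spin represents $[S]+k[T]$. The group $H_2(\widehat M;\mathbb{Z})\cong H^1(\widehat M,\partial\widehat M;\mathbb{Z})$ is torsion-free. Conjugate subgroups give maps differing by a homeomorphism of $S$, hence classes agreeing up to sign. So once $[T]\neq 0$, infinitely many spins are pairwise non-conjugate in $\pi_1(\widehat M)$. This descends to $\Gamma$ because a $\Gamma$-conjugacy class of subgroups of the finite-index subgroup $\pi_1(\widehat M)$ splits into only finitely many $\pi_1(\widehat M)$-classes.

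\textbf{Why you must pass to a finite cover.} The condition $[T]\neq 0$ needs at least two cusps; in a one-cusped manifold the boundary torus is null-homologous. This is one reason to work in the Cooper--Long--Reid cover $\widehat M$ with at least three cusps. The other is that $F$ must miss some cusp, which guarantees it is not a fibre and hence is geometrically finite. Your claim that such an $F$ exists in $M$ itself is unjustified; for a fibred knot complement the Seifert surface is the fibre. So you need the cover and the projection step at the end.

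\textbf{Several boundary components.} When $F$ has several boundary components, the annuli must be attached one at a time. First use an amalgamation via the first combination theorem, then HNN extensions via the second, with inductively chosen heights; equal heights as in Cooper--Long--Reid can give a compressible surface. The spin should be done on the last annulus, where every sufficiently large height keeps the group injective.
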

A coannular surface subgroup of $\Gamma$ is not quasi-Fuchsian, since every nontrivial element of a quasi-Fuchsian surface group is represented by a quasi-geodesic and hence cannot be parabolic.

\subsection*{Related Results}
The study of surface subgroups in hyperbolic $3$-manifolds has attracted sustained attention for decades, including but not limited to existence, ubiquity, and counting. In the closed case, the existence and ubiquity are due to Kahn--Markovi\'c~\cite{kahn2012immersing}. On the quantitative side, Masters~\cite{masters2005counting} presented upper and lower bounds of order $g^{cg}$ for the number of conjugacy and commensurability classes of surface subgroups of genus at most $g$ (the lower bound assumes that $M$ contains a self-intersecting totally geodesic surface). Kahn--Markovi\'c~\cite{kahn2012counting} later strengthened this bound to the order $(cg)^{2g}$, also removing the assumption that $M$ contains a totally geodesic surface. Calegari--Marques--Neves~\cite[Theorem 4.2]{CMN} counted conjugacy classes of $(1+\epsilon)$-quasi-Fuchsian surface subgroups and introduced the minimal surface entropy of closed negatively curved $3$-manifolds, giving estimates for this invariant. Kahn--Markovi\'c--Smilga~\cite{kmsGeometricallyRandom} improves the counting for asymptotically geodesic surfaces in a closed hyperbolic $3$-manifold. 

For cusped hyperbolic $3$-manifolds, Cooper--Long--Reid~\cite{clrEssentialClosedSurfaceCuspedM} constructed closed surface subgroups. Masters--Zhang~\cite{masters2008closed} and Baker--Cooper~\cite{baker2015finite} independently proved the existence of quasi-Fuchsian surfaces, while the ubiquity was proved by Cooper--Futer~\cite{cooper2019ubiquitous} and improved to a quantitative version by Kahn--Wright~\cite{kahn2021nearly}, the construction of which will play an important role in our estimate. Recently, Jiang--Vargas Pallete~\cite{jiang2025minimal} generalized the estimate of minimal surface entropy to cusped hyperbolic $3$-manifolds.

\subsection*{Proof Outline}
The proof of the upper bound in \Cref{thm1} follows the outline in \cite{masters2005counting} and \cite{kahn2012counting}, using a special family of triangulations of the topological closed surface $S_g$. Given such a triangulation $\mathcal{T}$, their works show that the homotopy type of a map $f\from S_g\to M$ is determined by the image of vertices of $\mathcal{T}$, if the edges are all mapped to short geodesic segments.
In the cusped case, a quasi-Fuchsian surface group contains no parabolic elements; hence the intersection of the associated immersed surface and the cusps of $M$ is the union of finitely many disks, which does not contribute to the topology of the immersed surface. In addition, since $M$ is aspherical, we only need to focus on the vertices that are mapped into the thick part of $M$, and the estimate follows similarly as in \cite{kahn2012counting}.

For the lower bound in Theorem \ref{thm1}, we will utilize the construction by Kahn--Wright, which provides a \emph{ubiquitous} collection of nearly geodesic immersed surfaces. These surfaces are made out of \emph{good pants} and \emph{good hamster wheels}. We follow the idea of Kahn--Markovi\'c~\cite{kahn2012counting} to assemble two nearly geodesic surfaces along a common non-separating long geodesic, where the bending angle between these two surfaces is close to $\pi/2$. Kahn--Markovi\'c showed that if the surfaces are made out of good pants, then the new surface is still quasi-Fuchsian and also maximal (i.e., not the covering space of another immersed surface) in $M$. We generalize the result to surfaces constructed by Kahn--Wright, and then obtain a large family of maximal quasi-Fuchsian surface subgroups, which yields the lower bound.

Cooper--Long--Reid~\cite{clrEssentialClosedSurfaceCuspedM} first established the existence of closed surface subgroups in a cusped hyperbolic 3-manifold $M$, by maximally compressing \emph{tubed} surfaces constructed from some properly embedded surfaces with boundary in some $\Z$-covers of some finite cover of $M$. Their tubing construction glues two identical copies of properly embedded essential surfaces $F_1, F_2$ with boundary with annuli of the \emph{same} height. In contrast, our approach joins the boundaries of $F_i$ together using annuli whose heights are chosen inductively (and hence may differ from one another). By carefully analyzing the limits sets and applying the Maskit combinations theorems, we demonstrate that the obtained surface is \poin. 
We then spin this coannular surface around the \emph{last} cusp in the inductive construction and obtain infinitely many coannular surfaces with the same genus in a finite cover of $M$. It turns out that they are all incompressible and pairwise non-homotopic. After projecting down to $M$, only finitely many of them can be homotopic, and then Theorem \ref{thm:coannular} follows.

\subsection*{Future Work.} For a closed hyperbolic 3-manifold $M$, Kahn--Markovi\'c~\cite[Conjecture 1.1]{kahn2012counting} conjectured that there exists a constant $C(M)$ such that both $\sqrt[2g]{n(g,M)}/g$ and $\sqrt[2g]{s(g,M)}/g$ converge to $C(M)$ when $g\to\infty$. 

The existence of surface subgroups in closed hyperbolic manifolds of dimensions at least 4 is established by Hamenst\"adt~\cite{Ham15} and Kahn--Rao~\cite{KR25}. Using the technique developed by Kahn--Wright~\cite{kahn2021nearly}, one should be able to show that any hyperbolic $n$-manifold with finite volume has surface subgroups without accidental parabolics. It is then natural to count the number of conjugacy classes and commensurability classes of surface subgroups in both closed and cusped cases. We make the following conjecture.

\begin{conj}
    For a complete hyperbolic $n$-manifold $M=\H^n/\Gamma$ of finite volume, there exists a constant $C(M)$, such that 
    \begin{equation}
        \lim_{g\to\infty}\frac{1}{g}\sqrt[2g]{n(g,M)}
        =\lim_{g\to\infty}\frac{1}{g}\sqrt[2g]{s(g,M)}
        =C(M).
    \end{equation}
\end{conj}

For higher dimensional cusped hyperbolic manifolds, one can also consider surface subgroups with accidental parabolics.

\begin{q}
    Does every cusped hyperbolic manifold of dimension at least $4$ contain infinitely many conjugacy classes of coannular surface subgroups of the same genus?
\end{q}

Although \Cref{cor3} presents a lower bound for the number of commensurability classes of purely pseudo-Anosov closed surface subgroups in mapping class groups, it is unclear to authors if an upper bound of the same order would hold, since the mapping class groups have more complicated structures in many senses than the fundamental group of the figure-eight knot complement.

\subsection*{Organization.} 
In Section \ref{sec:upper}, we prove the upper bound in Theorem \ref{thm1}. In Section \ref{sec:lower}, we recall the construction of quasi-Fuchsian surfaces by Kahn--Wright~\cite{kahn2021nearly} and prove the lower bound in Theorem \ref{thm1}.
In Section \ref{sec:MCG}, we apply Theorem \ref{thm1} to the figure-eight knot complement and provide the lower bound in Corollary \ref{cor3}.
In Section \ref{sec:coannular}, we prove Theorem \ref{thm:coannular}.

\subsection*{Acknowledgments.} 
We would like to thank Nathan M. Dunfield, Autumn Kent, Alan W. Reid, and Hongbin Sun for many fruitful discussions on surface subgroups of cusped hyperbolic 3-manifolds. We are grateful to Hongbin Sun for his careful reading of the paper and many helpful comments. We thank Chris J. Leininger for helpful discussions on atoroidal surface bundles. We also thank Ben Lowe for valuable comments. The third author would like to express her sincere gratitude to her advisor Autumn Kent, who is always patient and shares inspiring ideas in their weekly discussions. This work was partially completed when the first author visited Rutgers University–-New Brunswick and he would like to thank the institute for its hospitality. 
X.H.H. is partially supported by the start-up grants at Shanghai Institute of Mathematics and Interdisciplinary Sciences and NSFC No. 12501084. The third author was partially supported by NSF Grant DMS-2202718.

%% file: Upper-new.tex
\section{The Upper Bound} \label{sec:upper}
In \cite[Section 8.8]{thurston1997geometry}, Thurston proves that there are only finitely many conjugacy classes of non-coannular surface subgroups of bounded genus in a finite-covolume Kleinian group. Our first goal is to extend the upper bound results in \cite{masters2005counting,kahn2012counting} for quasi-Fuchsian surface subgroups to the cusped case. 

\begin{defi}
    Let $S_g$ be a closed topological surface of genus $g$. A connected graph $\tau$ is called a \emph{$(k,g)$-triangulation}, if it satisfies the following:
    \begin{itemize}
        \item $\tau$ can be embedded into $S_g$ such that every component of the set $S_g\bs \tau$ is a triangle;
        \item each vertex of $\tau$ has degree at most $k$;
        \item the graph $\tau$ has at most $kg$ vertices and at most $kg$ edges.
    \end{itemize}
    The set of all $(k,g)$-triangulations is denoted by $\TT(k,g)$.
\end{defi}

We note that any $(k,g)$-triangulation $\tau$ admits a unique embedding into $S_g$ (up to homeomorphism of $S_g$). 

A Riemann surface $S$ is \emph{$s$-thick} if its injectivity radius satisfies $\inj(S)\geq s$ for some $s>0$. 
The \textit{systole} of $M$ is the length of a shortest closed geodesic in $M$, denoted by $\sys(M)$. Similarly, we define $\sys(S)$ for any closed hyperbolic surface $S$.

For a quasi-Fuchsian surface subgroup given by a $\pi_1$-injective immersion $f\colon S_g\rightarrow M$, there is a hyperbolic structure on $S_g$ (in the conformal class of the pullback metrics), denoted by $S$, and a pleated map $g\from S\rightarrow M$ that is homotopic to $f$.

\begin{lem}\label{lem8}
	Let $f\colon S \rightarrow M$ be a pleated quasi-Fuchsian surface. Then 
	$$2\cdot\inj(S)\geq \sys(M).$$ 
\end{lem}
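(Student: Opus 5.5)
Since $S$ is closed and hyperbolic, $2\cdot\inj(S)=\sys(S)$. It therefore suffices to show that every closed geodesic $\gamma$ on $S$ has length at least $\sys(M)$. The plan is to compare the length of $\gamma$ on $S$ with the length of the closed geodesic in $M$ freely homotopic to $f(\gamma)$. This comparison rests on two facts: pleated maps are $1$-Lipschitz, and quasi-Fuchsian surface groups contain no parabolic elements.

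\textbf{Step 1 (Lipschitz property).} A pleated map $f\colon S\to M$ is $1$-Lipschitz with respect to the hyperbolic metric on $S$. It is an isometry on each ideal triangle or flat piece, and it maps the leaves of the pleating lamination isometrically onto geodesics. Hence any rectifiable path in $S$ is mapped to a path in $M$ of no greater length. In particular, for a shortest closed geodesic $\gamma\subset S$ of length $\sys(S)$, the closed curve $f(\gamma)$ has length $\ell_M(f(\gamma))\le \ell_S(\gamma)=\sys(S)$.

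\textbf{Step 2 (the image is loxodromic).} The class $[\gamma]\in\pi_1(S)$ is nontrivial, so $\pi_1$-injectivity gives $f_*[\gamma]\ne 1$ in $\Gamma$. Because $f_*\pi_1(S)$ is quasi-Fuchsian, it contains no parabolic elements, as remarked after \Cref{thm:coannular}. Also, $\Gamma$ is torsion-free, since $M$ is a manifold. Therefore $f_*[\gamma]$ is loxodromic, and the free homotopy class of $f(\gamma)$ contains a unique closed geodesic $\gamma^*$ in $M$.

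\textbf{Step 3 (conclude).} A closed geodesic in a hyperbolic manifold minimizes length in its free homotopy class. This follows by lifting to $\H^3$: the translation length of a loxodromic element equals the minimal displacement, and that is at most the length of any loop representing it. Combining this with Steps 1 and 2 gives
$$\sys(M)\le \ell(\gamma^*)\le \ell_M(f(\gamma))\le \sys(S)=2\cdot\inj(S).$$

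The only delicate point is Step 2, where we must exclude the possibility that $f(\gamma)$ is homotopic into a cusp; such a curve could be arbitrarily short while having no geodesic representative. This is exactly where the quasi-Fuchsian hypothesis is used. The $1$-Lipschitz property in Step 1 is standard (Thurston), and I will cite it rather than reprove it.
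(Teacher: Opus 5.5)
Your proposal is correct and follows the same route as the paper: use $2\cdot\inj(S)=\sys(S)$, the fact that a pleated map is distance non-increasing, and the absence of accidental parabolics, which forces each closed geodesic of $S$ to map to an essential loop of length at least $\sys(M)$. You spell out steps the paper leaves implicit: the image element is loxodromic, and the geodesic representative in $M$ minimizes length in its free homotopy class.
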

\begin{proof}
A pleated map $f\colon S\rightarrow M$ is distance non-increasing. Since $f$ has no accidental parabolics, the closed geodesics of $S$ are mapped to nontrivial closed curves in $M$ whose lengths are at least $\sys(M)$. Then the inequality holds since $2\cdot \inj(S)=\sys(S)$.
\end{proof}

\begin{rem}
Kahn--Wright~\cite{kahn2021nearly} proves that there exists a sequence of asymptotically geodesic surfaces $S_i$ in $M$. 
	\cite{assal2025asymptotic,hanNearlyGeodesicFilling} prove that the $S_i$ are asymptotically dense in the Grassmann bundle $\gt M$, and hence become arbitrarily deep inside the cusps of $M$. 
\end{rem}

Let $s=\sys(M)/2$ denote half the systole of $M$. Now suppose $S$ is a pleated quasi-Fuchsian surface and hence $s$-thick.
By \cite[Lemma 2.1]{kahn2012counting}, there exists $k=k(s)>0$ such that $S$ has a $(k,g)$-triangulation $\tau$ that is embedded into $S$ such that every edge of $\tau$ is a geodesic arc of length at most $s$.

Let $\epsilon=\min\{s,\epsilon_3\}$, where $\epsilon_3$ is the Margulis constant. Consider the thick-thin decomposition of $M$ as the union of $M_\epsilon$ (the subset of points whose injectivity radius is at least $\epsilon$) and its complement $M_{<\epsilon}$. With this choice of $\epsilon$, $M_{<\epsilon}$ consists of the cusps of $M$, and $M_\epsilon$ is a compact core homotopy equivalent to $M$. 

We count the upper bound on the conjugacy classes of quasi-Fuchsian surface subgroups by considering the image of vertices of $\tau$ under $f$.
Let $\mathcal{C}=\{C_1,\ldots,C_m\}$ be a minimal finite collection of balls of radius ${\epsilon}/{4}$ forming a cover of $M_\epsilon$. Suppose $h_i\colon S_i\rightarrow M$, $i=1,2$, are two pleated maps from two genus $g$ hyperbolic surfaces $S_i$, and $\tau_i$ the $(k,g)$-triangulation of $S_i$ from \cite[Lemma 2.1]{kahn2012counting}.
We say that $h_1$ and $h_2$ are homotopic if there is a map $q \colon S_1 \to S_2$ such that $h_2\circ q$ is homotopic to $h_1$. In the following lemma, we show that the homotopy class of the surface is essentially determined by those vertices sent into the compact core $M_\epsilon$.

\begin{lemma}\label{lemma}
    Suppose that there is a map $q\from S_1\rightarrow S_2$ such that $q(\tau_1) = \tau_2$. If for any $v\in \tau_1$, $h_1(v)$ and $h_2(q(v))$ are either in the same ball of $\mathcal{C}$ or in $M_{<\epsilon}$, then $h_1$ and $h_2$ are homotopic. 
\end{lemma}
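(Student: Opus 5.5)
We build the homotopy skeleton by skeleton over $\tau_1$, using that $M$ is aspherical ($\mathbb{H}^3$ is contractible). Then a map from a $2$-complex is determined up to homotopy by its restriction to the $1$-skeleton, and all obstructions above dimension one vanish. Concretely, we construct a homotopy $H\from \tau_1\times[0,1]\to M$ from $h_1|_{\tau_1}$ to $h_2\circ q|_{\tau_1}$, and then extend it over the triangles of $S_1\setminus\tau_1$.

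\textbf{Vertices.} For each vertex $v$ of $\tau_1$ we choose a path $\gamma_v$ from $h_1(v)$ to $h_2(q(v))$.
\begin{itemize}
\item If both points lie in a common ball $C_i\in\mathcal{C}$, let $\gamma_v$ be the geodesic segment inside $C_i$. It has length less than $\epsilon/2$, and $C_i$ is embedded and convex because $\epsilon/4$ is below the injectivity radius on $M_\epsilon$.
\item If both points lie in $M_{<\epsilon}$, first check that they lie in the same cusp component. Then let $\gamma_v$ be a path inside that cusp component.
\end{itemize}
The $1$-skeleton then reduces to a single claim. For each edge $e=[v,w]$ of $\tau_1$, the loop
\[
\ell_e = h_1(e)\cdot\gamma_w\cdot \overline{h_2(q(e))}\cdot\overline{\gamma_v}
\]
must be null-homotopic in $M$. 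Once this holds, we fill each $\ell_e$ by a disk, which gives the homotopy on $\tau_1$. Each triangle $\Delta$ of $S_1\setminus\tau_1$ then gives a map $\partial(\Delta\times[0,1])\cong S^2\to M$. This map extends over $\Delta\times[0,1]$ because $\pi_2(M)=0$. The result is a homotopy from $h_1$ to $h_2\circ q$.

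\textbf{The key estimate: edges.} Both $h_1(e)$ and $h_2(q(e))$ have length at most $s$, since pleated maps are $1$-Lipschitz and the edges of $\tau_i$ have length at most $s$.

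In the thick case, $\gamma_v$ and $\gamma_w$ both have length less than $\epsilon/2\le s/2$. So $\ell_e$ has length less than $3s\le \tfrac32\sys(M)$, which is not automatically below the systole. I would handle this in two ways.
\begin{itemize}
\item The endpoint of $h_1(e)$ is within $\epsilon/2$ of the endpoint of $h_2(q(e))$. Lift to $\mathbb{H}^3$ starting from a common lift of the first vertex. The lifts of the two paths from $h_1(v)$ to $h_2(q(w))$ then end at lifts of the same point that lie within $\epsilon+2s$ of each other, and these differ by a deck transformation of small translation length.
\item If this bound is too weak, I would instead shrink the constants, for example by using edge length $\epsilon/4$ in the triangulation at the cost of a larger $k$. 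This makes $\ell_e$ shorter than $\epsilon\le\sys(M)$, hence contained in a ball of radius below the injectivity radius at a thick point, hence contractible.
\end{itemize}
I expect this bookkeeping of constants to be the main obstacle. The cleanest route is to ensure that every $\ell_e$ with at least one endpoint in the thick part has length below $2\,\inj$ at some point of the loop.

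\textbf{Cusp case.} Suppose an endpoint lies in $M_{<\epsilon}$. Then $\ell_e$ may be long, but it stays in a single cusp component together with a short neighborhood of it. The cusp has fundamental group $\Z^2$ of parabolics, so the class of $\ell_e$ is parabolic (or trivial).

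Here the quasi-Fuchsian hypothesis enters. Suppose the edge classes failed to be trivial and the resulting homotopy changed the induced map. Then we would obtain a closed curve in $S_1$, a concatenation of edges of $\tau_1$ through the thin part, whose image under $h_1$ is homotopic to a parabolic. Since neither $h_1$ nor $h_2$ has accidental parabolics, such curves must be trivial in $S_1$. The preimage of the cusps is a union of disks, as noted in the introduction, so the corrections along those edges can be absorbed.

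More robustly, I would first homotope both $h_1$ and $h_2\circ q$ rel the thick vertices so that each thin disk maps into its cusp. Because those disks are contractible and cusp components are aspherical, $\pi_1$ of a cusp injects into $\pi_1(M)$. The two restrictions to each disk are then homotopic rel boundary, and this reduces the problem to the thick edges handled above.
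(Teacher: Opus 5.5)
Your vertex-by-vertex homotopy cannot be built at the thin vertices, and those are exactly the new case in this lemma. The hypothesis only says that $h_1(v)$ and $h_2(q(v))$ both lie in $M_{<\epsilon}$. They may lie in different cusps, so ``first check that they lie in the same cusp component'' has nothing to check it with. Even when they lie in the same cusp, the loop $\ell_e$ of an edge with a thin endpoint can be a nontrivial parabolic, so it cannot be filled. Invoking the absence of accidental parabolics does not help. The loop $\ell_e$ is assembled from $h_1(e)$, $h_2(q(e))$ and the connecting paths $\gamma_v,\gamma_w$, so it is not the $h_1$-image (or $h_2$-image) of a closed curve in a surface, and the quasi-Fuchsian property says nothing about its class. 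Your ``more robust'' variant also fails as written. The preimages of the cusps under $h_1$ and under $h_2\circ q$ are different subsets of $S_1$, and the two maps do not agree on the boundary of any such disk. So ``homotopic rel boundary'' has no meaning until a homotopy on the complement already exists. You also never show that the thin vertices, together with the edges and triangles among them, lie in disks of $S_1$.

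The missing idea, which is the paper's argument, is to discard the thin vertices rather than homotope through them.
\begin{itemize}
\item Let $\tau_1'$ be the full subgraph of $\tau_1$ on the vertices sent into balls of $\mathcal{C}$; the hypothesis makes $q$ carry it onto the corresponding $\tau_2'$.
\item The quasi-Fuchsian hypothesis is used once. Because $h_i(S_i)$ meets the cusps only in disks, every component of $S_1\setminus\tau_1'$ is simply connected, so $\pi_1(\tau_1')\to\pi_1(S_1)$ is onto.
\item Since $M$ is aspherical, the homotopy class of each map is then determined by its restriction to $\tau_1'$. It suffices that $h_1|_{\tau_1'}\simeq h_2\circ q|_{\tau_1'}$, which is the all-thick situation; the paper quotes Masters' Lemma 2.4 for it.
\item Equivalently, you could extend that homotopy over each complementary disk using $\pi_2(M)=0$, exactly as in your triangle step.
\end{itemize}

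On the thick edges themselves, your first option (a deck transformation of small displacement) is not an argument. A displacement up to $2s+\epsilon>\mathrm{sys}(M)$ does not force the element to be trivial. Your second option works, but it replaces the triangulation in the statement. Its bound is $3\epsilon/2$ rather than $\epsilon$, which is still below $2\epsilon$, and that is what contractibility needs.
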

\begin{proof}
    If all the vertices of $\tau_1$ are mapped into $\mathcal{C}$, the proof follows from [\citenum{masters2005counting}, Lemma 2.4]. 

    Suppose some vertices of $\tau_1$ are mapped into $M_{<\epsilon}$. Let $\tau'_i$ be the subgraph of $\tau_i$ induced by the vertices whose images lie in some $C_j\in\mathcal{C}$, i.e., containing all edges of $\tau_i$ between such vertices. Since each $h_i(S_i)$ intersects the cusps of $M$ in finitely many disks, each component of $S_i\bs\tau'_i$ is simply connected. So the induced injection $(h_i)_*\from \pi_1(S_i)\to\pi_1(M)$ is determined by the restriction of $h_i$ to $\tau'_i$,and hence determines the homotopy class of $h_i$. By \cite[Lemma 2.4]{masters2005counting}, $h_1|_{\tau'_1}$ is homotopic to $h_2|_{\tau'_2}$. Therefore $h_1$ is homotopic to $h_2$.    
\end{proof}

The next lemma is recalled from \cite{kahn2012counting}, providing an upper bound for the number of $(k,g)$-triangulations.

\begin{lem}[\citenum{kahn2012counting}, Lemma 2.2]\label{lem:triangulation}
    There exists a constant $C>0$ depending only on $k$, such that for $g$ sufficiently large, we have
    \begin{equation}
        |\TT(k,g)|\leq (Cg)^{2g}.
    \end{equation}
\end{lem}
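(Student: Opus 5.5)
This bound is quoted from Kahn--Markovi\'c, so the plan is to reconstruct their counting argument. An element of $\TT(k,g)$ is determined, up to isomorphism, by an abstract graph with at most $kg$ vertices, each of degree at most $k$, together with the embedding data. We therefore count in two stages: first the combinatorial graphs, then the ways of realising each graph as a triangulation of $S_g$. The target bound $(Cg)^{2g}$ should come entirely from a factorial-type term. Everything else should contribute at most $C^{g}$.

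\emph{Encoding.} Fix an embedding of $\tau$ in $S_g$. The triangulated surface is the same as a gluing of $F\le 2kg$ triangles along their edges. Euler characteristic gives $V-E+F=2-2g$ with $3F=2E$, so $F=2V+4g-4$. Also $V\le kg$, and degree at most $k$ gives $E\le kV/2$. Encode the triangulated surface by a ribbon structure, namely a cyclic ordering of the half-edges at each vertex. This datum determines the embedding.

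\emph{Counting the ribbon graphs.} The crude count, a perfect matching on $3F$ triangle sides, gives $(3F)!!\approx (cg)^{3F/2}$. That is too large when $V$ is of order $g$. The key is to count via spanning trees instead. Choose a spanning tree $T$ of $\tau$. Rooted plane trees with $V$ vertices and bounded degree number at most $C^{V}$, and with the cyclic orders included the count is still at most $C^{g}$. The remaining $E-V+1$ edges must be attached between half-edge slots of $T$. Contracting $T$ leaves a one-vertex ribbon graph with $E-V+1$ loops whose faces come from triangles. Then use the fact that in $S_g\setminus T$, which is a disk of perimeter $O(g)$, each non-tree edge is a chord.

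\emph{Main obstacle.} The hardest step is to show that the non-tree edges contribute only $(Cg)^{2g}$ rather than $(Cg)^{E-V}$. Here $E-V=V+6g-6$, so this step is exactly where the genus must appear. My first attempt would be a dual tree--cotree decomposition. Take a spanning tree $T$ of $\tau$ and a spanning tree $T^*$ of the dual graph avoiding $T$. The leftover edges number exactly $2g$, since $E-(V-1)-(F-1)=2-\chi=2g$. Both $T$ and $T^*$ are bounded-degree trees, the dual having degree $3$. So $T$, $T^*$, and the interleaving of their cyclic orders, being a planar structure on a disk, can be specified with $C^{g}$ choices. Each of the $2g$ leftover edges is then specified by choosing its two endpoints among the $O(kg)$ boundary slots, giving at most $(Cg)^{2}$ choices per edge. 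Summing over $V\le kg$ leaves the bound unchanged, and the product is $C^{g}\cdot (Cg)^{2\cdot 2g}$. This is off by a square from the claimed bound. To remove the square I would note that each leftover edge is a pairing of two of the $O(g)$ boundary slots of the cut-open disk. A matching of $4g$ slots chosen out of $O(g)$ costs $\binom{O(g)}{4g}(4g-1)!!\le C^{g}(4g)!!\approx (Cg)^{2g}$, which gives exactly the claimed exponent $2g$ once $g$ is large.
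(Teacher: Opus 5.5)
Your final tree--cotree argument is correct: it leaves exactly $2g$ edges, bounds all planar data by $C^{g}$, and counts the $4g$ leftover half-edges as an \emph{unordered} matching at cost $(4g-1)!!\le (Cg)^{2g}$, so it is a valid proof of the Kahn--Markovi\'c lemma, which the paper itself only cites without proof. To clean it up, discard the earlier claim that $S_g\setminus T$ is a disk (it is a genus-$g$ surface with one boundary circle; only the complement of $T\cup e_1\cup\dots\cup e_{2g}$ is a disk), correct $E-V$ to $2V+6g-6$, and replace the vague ``interleaving of cyclic orders'' step with the precise statement: the ribbon graph $T\cup e_1\cup\dots\cup e_{2g}$ is determined by the plane tree $T$, the positions of the $4g$ leftover half-edges in the corners of $T$ (repetitions allowed), and their matching, after which $\tau$ is fixed by one of Catalan-many triangulations of its single face.
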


Now we can present an upper bound for $s(g,M)$ when $g$ is sufficiently large.

\begin{theorem}\label{thm8}
Let $M$ be a cusped hyperbolic $3$-manifold. Then there exists a constant $C_1$ depending only on $\sys(M)$ and $\vol(\me)$, such that for $g$ sufficiently large,
\begin{equation}
    s(g,M)\leq (C_1g)^{2g}.
\end{equation}
\end{theorem}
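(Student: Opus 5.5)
We follow the counting scheme of Masters and Kahn--Markovi\'c, using \Cref{lemma} as the rigidity input. Fix $s=\sys(M)/2$, the constant $k=k(s)$ from \cite[Lemma 2.1]{kahn2012counting}, the constant $\epsilon=\min\{s,\epsilon_3\}$, and the cover $\mathcal{C}=\{C_1,\dots,C_m\}$ of $\me$ by balls of radius $\epsilon/4$.

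First, we represent each conjugacy class of quasi-Fuchsian surface subgroups of genus $h\le g$ by a pleated map $f\colon S\to M$. By \Cref{lem8}, $S$ is $s$-thick, so it carries a $(k,h)$-triangulation $\tau$ whose edges are geodesic arcs of length at most $s$. Genus exactly $h$ contributes at most the genus-$h$ count, so summing over $h\le g$ multiplies the final bound by at most $g$. This factor is absorbed into $(C_1g)^{2g}$ after enlarging $C_1$. We therefore fix the genus $g$.

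Next, we encode each pleated map by a combinatorial datum: the abstract triangulation $\tau\in\TT(k,g)$, together with a label on each vertex $v$. The label is an index $j$ with $f(v)\in C_j$ if $f(v)\in\me$, or a special symbol $\ast$ if $f(v)\in M_{<\epsilon}$. \Cref{lemma} shows that two pleated maps with the same datum, where the vertex correspondence is induced by the isomorphism of triangulations, are homotopic. They therefore define conjugate subgroups, once we also account for orientation or identification choices, which contribute only a bounded factor. It follows that
\begin{equation}
s(g,M)\le g\cdot|\TT(k,g)|\cdot (m+1)^{kg}\le g\,(Cg)^{2g}(m+1)^{kg},
\end{equation}
using \Cref{lem:triangulation} and the fact that $\tau$ has at most $kg$ vertices. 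Since $(m+1)^{kg}=\bigl((m+1)^{k/2}\bigr)^{2g}$, we may set $C_1=2C(m+1)^{k/2}$, and for large $g$ this gives $s(g,M)\le (C_1g)^{2g}$.

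Finally, we must bound $m$ in terms of $\sys(M)$ and $\vol(\me)$. Take a maximal $\epsilon/8$-separated set in $\me$. The balls of radius $\epsilon/4$ around its points cover $\me$, and the balls of radius $\epsilon/16$ around them are disjoint embedded balls, since $\epsilon/16<\inj$ on $\me$. Each of these small balls has volume bounded below by a function of $\epsilon$, so $m\le \vol(\me^{+})/v(\epsilon/16)$. Here $\me^{+}$ is the $\epsilon/16$-neighbourhood of $\me$, whose volume is comparable to $\vol(\me)$; alternatively, one can center the balls slightly inside $\me$ to avoid the neighbourhood. Because $\epsilon$ depends only on $\sys(M)$ and $k$ depends only on $s$, the constant $C_1$ depends only on $\sys(M)$ and $\vol(\me)$.

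The main obstacle is the legitimacy of the encoding step. \Cref{lemma} requires the same combinatorial triangulation with matched vertex labels, and it must treat cusp vertices correctly. In particular, the subgraph $\tau'$ spanned by thick vertices must carry $\pi_1(S)$, which uses the absence of accidental parabolics: disks in the cusps are simply connected. I would take \Cref{lemma} as given and check only that the data $(\tau,\text{labels})$ determine the homotopy class up to the choice of identification $q$. This choice is already built into the triangulation count.
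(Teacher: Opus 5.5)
Your proposal is correct and follows the paper's proof. You encode each pleated quasi-Fuchsian surface by its $(k,g)$-triangulation plus an $(m+1)$-valued vertex label, apply \Cref{lemma} and \Cref{lem:triangulation} to get at most $(m+1)^{kg}(Cg)^{2g}$ classes per genus, and sum over genera. Your packing bound on $m$ in terms of $\epsilon$ and $\vol(\me)$ goes slightly beyond the paper, which only asserts that dependence; the claimed comparability of $\vol(\me^{+})$ with $\vol(\me)$ would still need a line of justification.
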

\begin{proof}

Notice that the constant $k$ associated with the triangulation $\tau$ depends only on $s$ and hence only on the systole of $M$.

Suppose $f\from S\rightarrow M$ is a pleated quasi-Fuchsian surface of genus $g$ with a $(k,g)$-triangulation $\tau$ as before.
For each vertex $v\in\tau$, there are $(m+1)$ choices to map $v$ into a ball in $\mathcal{C}$ or into $M_{<\epsilon}$. Hence by Lemma \ref{lemma}, there are at most $(m+1)^{kg}$ ways to map $\tau$ into $M$ up to homotopy.

Let $\tilde{s}(g,M)$ be the number of $\pi_1$-injective immersed quasi-Fuchsian surfaces of genus $g$ up to homotopy. Then by Lemma \ref{lem:triangulation},
\begin{equation}
    \tilde{s}(g,M)\leq (m+1)^{kg}|\TT(k,g)|\leq (m+1)^{kg}(Cg)^{2g}\leq (c_1g)^{2g},
\end{equation}
for some constant $c_1$ and sufficiently large $g$. The constant $c_1$ then only depends on $C$ (and then only on $s$ and $\vol(\me)$). Then we have
\begin{equation}
    s(g,M)=\sum_{r=2}^g \tilde{s}(r,M)\leq \sum_{r=2}^g (c_1r)^{2r}\leq (C_1g)^{2g},
\end{equation}
for some constant $C_1$, which depends only on $\sys(M)$ and $\vol(\me)$.
\end{proof}

%% file: Lower.tex
\section{The Lower Bound}\label{sec:lower}

In this section, we establish the lower bound in \Cref{thm1}. We will assemble nearly geodesic quasi-Fuchsian surfaces constructed by Kahn--Wright to construct maximal surface subgroups, following the idea of Section 4 of \cite{kahn2012counting}. First, we review the construction by Kahn-Wright for closed surfaces.
We then present the gluing pattern to cut and paste two surfaces along a common good geodesic, where the new surfaces are \emph{maximal}. Finally, we estimate a lower bound on the number of surfaces from such construction, establishing \eqref{main-estimate}. The parameters $\epsilon$ and $R$ will appear frequently in this section, where $\epsilon$ is sufficiently small and positive, and $R$ is chosen to be a sufficiently large integer depending on $\epsilon$.  Once $\epsilon$ is given, one can determine the size of $R$ by the conditions in \cite{kahn2021nearly}.

\subsection{Review of Kahn--Wright's Construction}\label{sec:review KW}

In this subsection, we review the method in Kahn--Wright~\cite{kahn2021nearly} to construct quasi-Fuchsian surfaces in cusped hyperbolic 3-manifolds, but we will only illustrate the necessary geometric and topological ingredients, without discussing any hard analysis.

\subsubsection{Pants and Hamster Wheels}
Pants are the key building blocks of surfaces in Kahn-Markovi\'c's good pants construction. But in cusped hyperbolic 3-manifolds, mixing does not yield the same equidistribution estimates as in the compact case in regions where the injectivity radius is small. Hence Kahn-Wright introduced hamster wheels as new building blocks to close up the surface. We will briefly review their construction.

For a closed geodesic $\gamma$ in $M$, a \textit{constant turning normal field for $\gamma$} is defined to be a smooth unit normal field $u$ with a constant slope, which is a section of the unit normal bundle for $\gamma$. If the constant slope ${(\theta+2k\pi)}/{b}$ with $l(\gamma) = b+i\theta$ and $k\in \mathbb{Z}$ satisfies $|\theta +2k\pi|<\pi$, then $u$ is called a \textit{slow and constant turning normal field for $\gamma$}. 

A closed geodesic $\gamma$ is called an $(R,\epsilon)$-\emph{good curve} if its complex length $l(\gamma)$ satisfies that $|l(\gamma)-2R|<2\epsilon$.

Let $P$ be a topological pair of pants, which is a thrice-punctured sphere. A continuous map $p\from P\rightarrow M$ is called \textit{a (skew) pair of pants in $M$} up to homotopy if $p_*\from\pi_1(P)\rightarrow \pi_1(M)$ is injective up to conjugacy, and the images of its boundary components $C_1,C_2,C_3$ are homotopic to closed geodesics $\gamma_1,\gamma_2,\gamma_3$ in $M$. In general, we also call $\rho$ a \textit{(free-floating) pair of pants in $\mathrm{PSL}(2,\mathbb{C})$} if $\rho\from\pi_1(P)\rightarrow \mathrm{PSL}(2,\mathbb{C})$ is an injective homomorphism  up to conjugacy.

For each $i$, denote by $a_i$ a simple non-separating arc in $P$ connecting $C_{i-1}$ and $C_{i+1}$. Let $\eta_i$ be the \textit{orthogeodesic} in $M$ homotopic to its image $p(a_i)$, orthogonal to the closed geodesics $\gamma_{i-1}$ and $\gamma_{i+1}$ with constant velocity. Define the half-length $\textbf{hl}(\gamma_i) = d_{\gamma_i}(\eta_{i-1},\eta_{i+1})$ of $\gamma_i$ to be the complex distance between $\eta_{i-1}$ and $\eta_{i+1}$ along $\gamma_i$. The imaginary part of this distance is the oriented angle when we parallel transport the tangent vector of $\eta_{i-1}$ along $\gamma_i$ to the tangent vector of $\eta_{i+1}$. The unit tangent vector to $\eta_j$ on $\gamma_i$ (pointing towards $\gamma_j$) is called a \emph{formal foot} of the pants $P$ on $\gamma_i$. Then each cuff of a pair of pants carries two formal feet and a unique slow and constant turning vector field passes through these two feet on the cuff.

\begin{defi}
    Fix a small number $\epsilon>0$ and a large number $R>0$. An ($R,\epsilon$)\textit{-good pair of pants} in $M$ (or $\PSL(2,\C)$) is a pair of pants $P$ with closed geodesic boundaries satisfying
    \begin{equation}
        |\textbf{hl}(\gamma_i)-R|<\epsilon
    \end{equation}
for all $i$. An \textit{$R$-perfect pair of pants} in $M$ (or $\PSL(2,\C)$) is an ($R,0$)-good pair of pants in $M$ (or $\PSL(2,\C)$).
\end{defi}

The $R$-perfect hamster wheel $H_R$ is defined as the unique planar hyperbolic surface such that
\begin{itemize}
    \item it has $R+2$ geodesic boundaries, each of length $2R$;
    \item it admits a $\Z/R\Z$ action, which fixes two of the boundaries (which are called the outer cuffs) and cyclically permutes the other $R$ boundaries (which are called the inner cuffs).
\end{itemize}
Then $H_R$ is the regular cyclic cover of degree $R$ of the hyperbolic pants with cuff lengths $2,2$ and $2R$.
A continuous map $h\from H_R\rightarrow M$ is called \textit{a hamster wheel in $M$} up to homotopy if $h$ is $\pi_1$-injective and the images of its boundary components are closed geodesics in $M$.

Denote by $\gamma_l$ and $\gamma_r$ the images of two outer cuffs and $\lambda_1,\ldots, \lambda_R$ be the orthogeodesics connecting $\gamma_l$ and $\gamma_r$ in $M$, where the image of each inner cuff lies between two adjacent $\lambda_i$. The foot of $\lambda_i$ at $\gamma_l$ is the unit vector $\eta_{il}$ pointing outward along $\lambda_i$ for each $i$, similarly for $\eta_{ir}$. See Figure \ref{fig1}.
\begin{figure}[ht]
        \centering
        \includegraphics[width=0.5\textwidth]{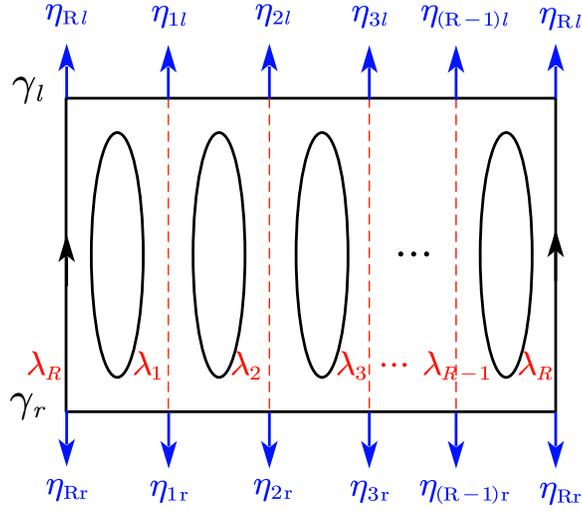}
        \caption{A hamster wheel.}
        \label{fig1}
\end{figure}

\begin{defi}
    Fix a small number $\epsilon>0$ and a large number $R>0$. An ($R,\epsilon$)\textit{-good hamster wheel in $M$} is a hamster wheel $h\from H_R\to M$ with closed geodesic boundaries satisfying the following statement: there exist slow and constant turning vector fields $v_l$ on $\gamma_l$ and $v_r$ on $\gamma_r$ such that for each $i$,
    \begin{enumerate}
        \item The complex distance between $\gamma_l$ and $\gamma_r$ along $\lambda_i$ satisfies 
        $$
        |d_{\lambda_i}(\gamma_l,\gamma_r) - (R-2\log\sinh1)|<\frac{\epsilon}{R};
        $$
        \item The complex distance between the feet $\eta_{il}$ and $\eta_{(i+1)l}$ along the outer boundary geodesics $\gamma_l$ satisfies 
        $$
        |d_{\gamma_l}(\eta_{il},\eta_{(i+1)l})-2|<\frac{\epsilon}{R}.
        $$ 
        Similarly, the same estimate holds with $l$ replaced by $r$.
        \item The angle between the feet $\eta_{il}$ and the vector field satisfies 
        $$
        |\eta_{il}-v_l|<\frac{\epsilon}{R},
        $$ similarly if we replace $l$ by $r$.
    \end{enumerate}
\end{defi}

For each inner cuff $\gamma$ of a good hamster wheel $h\from H_R\to M$, two formal feet are chosen as follows. Consider the unit normal vector on $\gamma$ that is the unit tangent vector to the orthogeodesic between $\gamma$ and an adjacent inner cuff and pointing towards the adjacent inner cuff: there are two such vectors, denoted by $\al_+$ and $\al_-$. One can verify that $|\al_+-\al_--R|$ is small; hence we can pick a unique $\al'_+$ and $\al'_-$ such that $\al'_+-\al_+=\al_--\al'_-$ are small and $\al'_+-\al'_-=\hl(\gamma)$. Then $\al'_+,\al'_-$ are called the formal feet on $\gamma$, and there is a unique slow and constant turning vector field through $\al'_+$ and $\al'_-$. The half-lengths of outer and inner cuffs are then well-defined using the slow and constant turning vector fields.

For simplicity, an $(R,\epsilon)$-good pair of pants or an $(R,\epsilon)$-good hamster wheel is called an \textit{$(R,\epsilon)$-good component}. A pants or hamster wheel is called $R$-perfect, if it is $(R,0)$-good. The cuffs of an $(R,\epsilon)$-good component are always $(R,\epsilon)$-good curves.

We note that for a cuff (or inner cuff) $\gamma$ of a pants (or a hamster wheel) $P$, the two formal feet are symmetric in the unit normal bundle $N^1(\gamma)$; hence we just denote a formal foot of $P$ on $\gamma$ by $\foot_\gamma(P)$, without indicating which formal foot if there is no ambiguity.

\subsubsection{Well-matched Components and QF Surfaces}

The formal feet of cuffs of good pants and the inner cuffs of good hamster wheels have been defined, and they will be used to detect the twist parameter when matching two good components along a common cuff.

\begin{defi}\label{def:well-matched}
    Two good components (pants or hamster wheels) $P$ and $Q$ with a common geodesic $\gamma$ as one of their boundaries are $(R,\epsilon)$-\textit{well-matched at $\gamma$} if either of the following holds: 
    \begin{enumerate}
        \item For each formal foot $\al$ of $P$ on $\gamma$, there is a formal foot $\beta$ of $Q$ on $\gamma$ such that        
        \begin{equation}
            |\al-\beta-(1+i\pi)|<\frac{\epsilon}{R};
        \end{equation}
        \item Otherwise, the constant turning vector fields form a bend angle of at most $100\epsilon$.
    \end{enumerate} 
    We define $s_\gamma=\al-\beta$ as the \emph{shear} along $\gamma$ for the first case, and as the bending angle for the second case. We also say that two good components are glued through an $(R,\epsilon)$\emph{-good matching}.
\end{defi}

Recall that a collection of quasi-Fuchsian surface subgroups is called \emph{ubiquitous}, if for any two hyperbolic planes $\Pi,\Pi'\subset \H^3$ with positive distance between them, there exists a surface subgroup whose boundary circle lies between $\partial\Pi$ and $\partial\Pi'$. Kahn-Wright showed that for any given $\epsilon>0$, when $R$ is sufficiently large,
\begin{itemize}
    \item all closed surface representations in $\PSL(2,\C)$ that are assembled by $(R,\epsilon)$-good components via $(R,\epsilon)$-good matching are nearly Fuchsian [\citenum{kahn2021nearly}, Theorem 2.2]; 
    \item the collection of such surfaces in $M$ is ubiquitous [\citenum{kahn2021nearly}, Theorem 1.1].
\end{itemize}
These two statements are essentially all we need from \cite{kahn2021nearly} to demonstrate the lower bound.

\subsection{Good QF Representations and Bending Deformations}\label{sec:bending}

In Section 3 of \cite{kahn2012counting}, Kahn-Markovi\'c showed that given any QF representation of a closed surface parametrized with a pants decomposition, one can deform the bending angle (i.e., the imaginary part of the shear parameter) along some cuffs up to $3\pi/4$, and obtain another QF representation whenever the collars around these cuffs are disjoint. Here the width of collars is chosen to be a universal constant. In our setting, we need a similar result for surfaces made of good components. 

Suppose $S$ is a topological surface (with or without boundary). A \emph{component decomposition} is a collection $\mathcal{C}$ of disjoint simple closed curves such that every connected component of $S\backslash\mathcal{C}$ is a component (i.e., a pants or a hamster wheel).

A \emph{generalized component} is essentially a finite cover of a component. When taking finite covers of a surface with a component decomposition, the lifts of each component are generalized components; then we get a \emph{generalized component decomposition} of the covers.

\begin{defi}
    A pair $(\tilde{P},\chi)$ is a generalized component if $\tilde{P}$ is a compact surface with boundary and $\chi$ is a finite covering map $\chi\from\tilde{P}\to P$, where $P$ is a component. We denote by $\chi_*$ the induced homomorphism $\chi_*\from\pi_1(\tilde{P})\to\pi_1(P)$, and we sometimes call $\tilde{P}$ a generalized component if $\chi$ is understood.

    Let $(\tilde{P},\chi)$ be a generalized component and $\tilde{\rho}\from\pi_1(\tilde{P})\to\PSL(2,\C)$ a representation. We say that $\tilde{\rho}$ is admissible with respect to $\chi$, if it factors through $\chi_*$, i.e., there exists $\rho\from\pi_1(P)\to\PSL(2,\C)$ such that $\tilde{\rho}=\rho\circ\chi_*$. 
\end{defi}

Let $\tilde{C}_j$ ($j=1,2,\dots,k$) denote the cuffs (the boundary curves) of $\tilde{P}$, and let $C_i$ ($i=1,2,\dots,s$) denote the cuffs of $P$. Then $\chi$ maps each $\tilde{C}_j$ onto some $C_i$ with some degree $m_j\in\Z_+$; we say that such $\tilde{C}_j$ is a degree-$m_j$ curve. For every admissible $\tilde{\rho}$,
\begin{itemize}
    \item the slow and constant turning vector field on $C_i$ uniquely determines a slow and constant turning vector on $\tilde{C}_j$;
    \item when $C_i$ is a cuff of a pair of pants or an inner cuff of a hamster wheel, we define the half-length of $\tilde{C}_j$ as $\hl(\tilde{C}_j)=\hl(C_i)$. One notes that $\hl(\tilde{C}_j)$ is then not the half of the length of $\tilde{C}_j$.
\end{itemize}

Let $S$ be an oriented closed topological surface. We say a collection $\mathcal{C}$ of disjoint simple closed curves is a \emph{generalized component decomposition}, if for every connected component $\tilde{P}$ of $S\backslash\mathcal{C}$, there is an associated finite cover $\chi\from\tilde{P}\to P$. Let $f\from\pi_1(S)\to\PSL(2,\C)$ be a representation. We may assume that $f$ also satisfies that:
\begin{itemize}
    \item For each generalized component $\tilde{P}$ of the component decomposition of $S$, the restriction $f|\from \pi_1(\tilde{P})\to\PSL(2,\C)$ is admissible.
    \item Given a curve $C\in\mathcal{C}$, there are two generalized components $\tilde{P}_1$, $\tilde{P}_2$ having $C$ as a cuff that lie on opposite sides of $C$. We assume that the restriction of each finite cover $\chi_i\from\tilde{P}_i\to P_i$ over the curve $C$ has the same degree, for $i=1,2$. If the half-lengths of $C$ are well-defined from both $\pi_1(\tilde{P}_i)\to\PSL(2,\C)$, then they are exactly the same.
\end{itemize}
Moreover, let $C_i\in P_i$ be the cuff so that $\chi(C)=C_i$ and $f_i\from\pi_1(P_i)\to\PSL(2,\C)$ the representation such that $f|_{\pi_1(\tilde{P}_i)}=f_i\circ (\chi_i)_*$. When the half-lengths of the $C_i$ are well-defined (and then equal), we define the \emph{shear} along $C$ to be the shear along $f_1(C_1)=f_2(C_2)$ between $P_1$ and $P_2$.

We say a generalized component representation $\tilde{\rho}\from\pi_1(\tilde{P})\to\PSL(2,\C)$ is $(R,\epsilon)$-\emph{good}, if it is admissible with respect to some finite cover $\chi\from\tilde{P}\to P$ for some $(R,\epsilon)$-good component $\rho\from \pi_1(P)\to\PSL(2,\C)$. We can then define that two generalized good components are $(R,\epsilon)$-well-matched, similar to Definition \ref{def:well-matched}. Furthermore, we say that a representation $f\from\pi_1(S)\to\PSL(2,\C)$ is $(R,\epsilon)$-good, if $S$ has a generalized component decomposition $\mathcal{C}$ such that
\begin{itemize}
    \item each connected component of $S\backslash\mathcal{C}$ is a generalized $(R,\epsilon)$-good component;
    \item along each curve $C\in\mathcal{C}$, the two associated generalized components are $(R,\epsilon)$-well-matched along $C$.
\end{itemize}

Suppose $f\from \pi_1(S)\to\PSL(2,\C)$ is an $(R,\epsilon)$-good representation. Let ${f}_0\from \pi_1(S)\to \PSL(2,\R)$ be the surface representation by replacing the good generalized components with $R$-perfect generalized component and gluing them in the same topological pattern via $R$-perfect matching (i.e., $(R,0)$-good matching). Then by Kahn--Wright's result [\citenum{kahn2021nearly}, Theorem 2.2] for any $K>1$, there exist $\epsilon>0$ and $R_0>0$ such that when $R>R_0$, $f(\pi_1(S))$ is $K$-quasiconformally conjugate to ${f}_0(\pi_1(S))$. Now we suppose that $\mathcal{C}_0$ is a collection of cuffs on $S$, and let $\mathcal{K}(f_0,\mathcal{C}_0)$ be the largest number such that the collection of collars of width $\mathcal{K}(f_0,\mathcal{C}_0)$ around the curves from $\mathcal{C}_0$ is disjoint on the hyperbolic surface $\tilde{S}=\H^2/{f}_0(\pi_1(S))$. For each curve $C\in\mathcal{C}_0$, we alter the associated bending angle along $C$ on $f(\pi_1(S))$ by $\pi/2$ or $-\pi/2$, and then obtain a new representation $f_2\from \pi_1(S)\to\PSL(2,\C)$.

With the above settings, we state the following result.

\begin{theorem}\label{thm:bending}
    There exist $D,\epsilon,R_0>0$ such that when $R>R_0$:
    if $\mathcal{K}(f_0,\mathcal{C}_0)\geq D$, then $f_2\from \pi_1(S)\to\PSL(2,\C)$ is $K_1$-QF, where $K_1$ depends only on $D$ and $\epsilon$.
\end{theorem}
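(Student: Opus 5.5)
Following Kahn--Markovi\'c~\cite[Section 3]{kahn2012counting}, we reduce to the quasi-Fuchsian stability of a bent Fuchsian group, and then pass from the perfect model $f_0$ to the actual representation. Write $f_0'\from\pi_1(S)\to\PSL(2,\C)$ for the representation obtained from $f_0$ by changing the bending angle along each $C\in\mathcal{C}_0$ by the same $\pm\pi/2$ used to build $f_2$. The plan is to show first that $f_0'$ is $K_0$-QF with $K_0$ depending only on $D$. Then, using the local nature of Kahn--Wright's estimates, we show that $f_2$ is a small quasiconformal perturbation of $f_0'$.

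\textbf{Step 1 (bending the perfect surface).} The surface $\tilde S=\H^2/f_0(\pi_1(S))$ is a hyperbolic surface, and the geodesics in $\mathcal{C}_0$ have disjoint collars of width at least $D$. Bending $\tilde S$ by angle $\pi/2$ along these geodesics gives a pleated surface whose pleating locus is a union of geodesics, each at distance at least $D$ from the others. For $D$ large, a standard local-to-global argument applies: a path in $\H^3$ made of geodesic segments of length at least $D$, meeting at angles bounded away from $0$, is a uniform quasi-geodesic. Hence the lift of the pleated plane is a uniform bilipschitz embedding, since its restriction to each flat piece is isometric and the pieces turn by exactly $\pi/2$ with separation at least $D$. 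Therefore $f_0'$ is discrete, faithful and $K_0(D)$-QF. This is precisely \cite[Section 3]{kahn2012counting}, which applies verbatim since only the collar width matters and the generalized components of $f_0$ are honestly Fuchsian.

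\textbf{Step 2 (perturbation).} In $f_2$, the shear along each $C\in\mathcal{C}_0$ is $\epsilon/R$-close to the corresponding shear of $f_0'$, and all other gluings are $(R,\epsilon)$-good. We rerun Kahn--Wright's proof of \cite[Theorem 2.2]{kahn2021nearly} with $f_0'$ as the model in place of $f_0$. Their argument bounds the deviation between the boundary maps of $f$ and $f_0$ by summing errors along paths in the dual tree of the component decomposition, using the fact that each perfect piece is Fuchsian. We intend to replace the model plane by the bent plane of Step~1. Between consecutive bending lines the model is planar over distance at least $D$, so Kahn--Wright's estimate applies on each flat region. At a bending line, the $\pi/2$ bend is inserted as an exact M\"obius rotation, which commutes with the perturbation up to an error of order $\epsilon$. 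The accumulated error then remains geometric in the number of crossings, exactly as in their proof.

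\textbf{Step 3.} Steps 1 and 2 show that $f_2$ is $K'$-quasiconformally conjugate to $f_0'$, which is itself $K_0$-QF, so $f_2$ is $K_1=K_0K'$-QF with $K_1$ depending only on $D$ and $\epsilon$. The main obstacle is Step 2. Kahn--Wright's estimates are built around nearly Fuchsian models and exploit the hyperbolic plane's contraction at scale $R$, so we must check that their bounds remain uniform once the model is only quasi-Fuchsian. Hamster wheels need particular care, since their outer-cuff estimates are at scale $\epsilon/R$ and cuffs of $\mathcal{C}_0$ may bound them. The first approach to try is to treat each bend as a separate ``good matching'' with shear $1+i(\pi\pm\pi/2)$. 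One then verifies that Kahn--Wright's round-circle-bending lemma tolerates a fixed angle $\pi/2$ provided consecutive bends are $D$-separated, with $D$ chosen before $\epsilon$ and $R_0$.
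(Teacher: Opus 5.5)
Your Step~1 is sound. After bending the Fuchsian model $f_0$ by $\pi/2$ along geodesics whose lifts are $2D$-separated, every geodesic of $\H^2$ becomes a broken geodesic with pieces of length at least $2D$ and interior angles at least $\pi/2$. Hence $f_0'$ is $K_0(D)$-QF.

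\textbf{The gap is Step~2.} It carries essentially all the content of the theorem and is only announced. The result \cite[Theorem 2.2]{kahn2021nearly} is proved for assemblies whose perfect model is Fuchsian: its estimates propagate nearly round circles through small bends. Nothing in it covers a model containing bends of definite size $\pi/2$. Three of your claims are exactly what would have to be proved:
\begin{itemize}
\item that the Kahn--Wright estimate ``applies on each flat region'' (it is a statement about closed assemblies, not subsurfaces);
\item that the exact rotation ``commutes with the perturbation up to an error of order $\epsilon$'';
\item that the accumulated error ``remains geometric''.
\end{itemize}
Your own Step~3 concedes these have not been checked.

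The premise of Step~2 is also inaccurate. Suppose a curve of $\mathcal{C}_0$ is glued by clause~(2) of Definition~\ref{def:well-matched}, e.g.\ an outer cuff of a hamster wheel, which carries no formal feet. Then the good matching controls only the bending to within $100\epsilon$, not the shear to within $\epsilon/R$.

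\textbf{The missing idea is to reverse your two steps.} This is how the paper sets things up. Just before the statement it applies \cite[Theorem 2.2]{kahn2021nearly} to the \emph{unbent} representation $f$, obtaining a $K$-quasiconformal conjugacy to $f_0$ with $K\to 1$ as $\epsilon\to 0$. It then defers to the argument of \cite[Theorem 3.1]{kahn2012counting}.

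Used as a black box, this conjugacy gives an $f$-equivariant map $\H^2\to\H^3$ that is uniformly close to an isometry at any fixed scale. It sends each lift of a curve of $\mathcal{C}_0$ close to the axis of the corresponding element, and after a small adjustment onto it. Composing region by region with the $\pm\pi/2$ rotations about these axes gives an $f_2$-equivariant map.

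The image of any geodesic of $\H^2$ is then a concatenation of almost-geodesic pieces of length at least $2D$. Their interior angles are close to the model angles, which are at least $\pi/2$. So your Step~1 local-to-global argument applies unchanged, once $\epsilon$ is small and $R$ is large. It gives a quasi-isometric embedding with constants depending only on $D$, hence $f_2$ is $K_1$-QF. No quantitative comparison of $f_2$ with a non-Fuchsian model is ever needed.
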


\begin{remark}
    The above theorem is analogous to Theorem 3.1 in \cite{kahn2012counting} for good components, and the proof will be the same, so we skip it.
\end{remark}

\subsection{Assembling good representations}\label{sec:amalgamating}

In this subsection, we cut two closed surfaces that share a common closed geodesic and re-glue them into one surface. We show that the new surface is maximal, i.e., it is not a nontrivial covering space of another surface, when the initial two surfaces satisfy some certain condition.

We first recall the definition of \emph{height} from \cite{kahn2021nearly}. Let $\epsilon_0$ be a universal constant, independent of $M$, so that the horoballs about cusps consisting of those points with injectivity radius at most $\epsilon_0$ are disjoint (see \cite[Chapter D]{benedetti1992lectures}). The \emph{height} of a point in $M$ is defined as the signed distance to the boundary of these horoballs, so that the height is positive if the point is in one of the horoballs and negative otherwise.

Kahn-Wright~\cite{kahn2021nearly} showed that, for a given $\epsilon>0$ and $R$ sufficiently large, one can find many $(R,\epsilon)$-good representations in $\Gamma$. Moreover, suppose $\gamma$ is an $(R,\epsilon)$-good curve with height at most $h_C=50\log R$ whose associated hyperbolic element $A_\gamma\in\Gamma$ is primitive, i.e., $A_\gamma \neq \alpha^n$ for some $\alpha\in\Gamma$ and $|n|>1$. 
It follows from \cite[Section 3,4,5]{kahn2021nearly} (the statements about the equidistribution of good components in $M$ and the constructions of QF surfaces) that one can find two $(R,\epsilon)$-good representations $f_i\from \pi_1(S_i)\to\Gamma$, $i=1,2$, where the $S_i$ are two closed surfaces with component decomposition $\mathcal{C}_i$ (not just a generalized component decomposition), and components $P_i^{\pm}$ with the following properties: 
\begin{enumerate}
    \item There are two oriented curves $C_i\in\mathcal{C}_i$, and $c_i\in\pi_1(S_i)$ in the conjugacy class of $C_i$ such that $f_1(C_1)=f_2(C_2)$ is the conjugacy class of $A_\gamma$.

    \item The good components $P_i^\pm$ are in $S_i$ such that $\gamma$ is a positively oriented boundary component of $P_i^+$ and negatively oriented for $P_i^-$. The formal feet or slow and constant turning vector fields of $P_i^\pm$ on $\gamma$ should satisfy:
    \begin{itemize}
        \item If $P_2^+$ and $P_1^-$ both have formal feet on $\gamma$, then 
        \begin{equation}
            |\foot_\gamma(P_2^+)-\foot_\gamma(P_1^-)-\frac{\pi}{2}i|\leq\frac{\epsilon}{R};
        \end{equation}
        otherwise the bending angle between the slow and constant turning vector fields of $P_2^+$ and $P_1^-$ lies in $[{\pi}/{2}-100\epsilon, {\pi}/{2}+100\epsilon]$. Here $100\epsilon$ comes from the well-matchedness along the cuffs without formal feet. 
        \item The same condition for $P_1^+$ and $P_2^-$.
    \end{itemize}
\end{enumerate}
After replacing the $S_i$ by appropriate finite covers, we may assume the following also hold:
\begin{enumerate}[resume]
    \item The curve $C_i$ is a non-separating simple closed curve in $S_i$, $i=1,2$.

    \item  The surfaces $S_1$ and $S_2$ have the same genus.

    \item Let $\rho_i^0\from \pi_1(S_i)\to\PSL(2,\R)$ be the Fuchsian representation obtained as in Section \ref{sec:bending}. We may assume that $\mathcal{K}(\rho_i^0,\{C_i\})>D$, where $D$ is the constant from Theorem \ref{thm:bending}.
\end{enumerate}
We note that after taking finite covers, we will only assume that $\mathcal{C}_i$ is a generalized component decomposition of $S_i$ while the $f_i$ are still $(R,\epsilon)$-good representations. Now we fix such good representations $f_i$, surfaces $S_i$, oriented curves $C_i$, for $i=1,2$, and the primitive element $A_\gamma$.

Suppose $\tilde{S}_1$ and $\tilde{S}_2$ are two degree-$n$ covers of $S_1$ and $S_2$, respectively, such that 
\begin{itemize}
    \item there is no intermediate cover between $\tilde{S}_1$ and $S_1$, $\tilde{S}_2$ and $S_2$ (i.e., they are primitive covers);
    \item for some integer $1\leq k\leq n-1$, there are two lifts $\tilde{C}_1\subset \tilde{S}_1$ and $\tilde{C}_2\subset \tilde{S}_2$ of the curves $C_1$ and $C_2$, respectively, of degree $k$  ($k<n$ is required in the proof of Theorem \ref{thm16}).
\end{itemize} 
Denote the induced representations by $\tilde{f}_i\from\pi_1(\tilde{S}_i)\rightarrow \Gamma$, $i=1,2$. 
Then the $\tilde{f}_i$ are $(R,\epsilon)$-good representations that also satisfy the above five conditions, except that
\begin{equation}
    \tilde{f}_1(\pi_1(\tilde{S}_1))\cap\tilde{f}_2(\pi_1(\tilde{S}_2))
    =\langle A_\gamma^k\rangle.
\end{equation}
Here we can take the base point $*_i$ of $\tilde{S}_i$ on $\tilde{C}_i$, and $*$ of $M$ on $\gamma$. Since the $\tilde{f}_i$ are both nearly quasi-Fuchsian representations and their associated subsurfaces in $M$ meet at $\gamma$ with an angle close to $\pi/2$, their universal covers in $\H^3$ are two nearly geodesic planes meeting at a geodesic with an angle close to $\pi/2$. Thus we can assume $\tilde{f}_1(\pi_1(\tilde{S}_1))\cap\tilde{f}_2(\pi_1(\tilde{S}_2))
    =\langle A_\gamma^k\rangle.$

Now we amalgamate the $\tilde{f}_i$ as follows. 
Cut $\tilde{S}_i$ along $\tilde{C}_i$ to get a surface $\tilde{S}'_i$ with two boundary components $\tilde{C}^+_i$ and $\tilde{C}^-_i$.
We glue the surfaces $\tilde{S}'_1$ and $\tilde{S}'_2$ by gluing $\tilde{C}^\pm_1$ to $\tilde{C}^\pm_2$, and obtain a closed surface $\hat{S}$. The surface $\hat{S}$ is well-defined up to a twist by $l(\gamma)$ with period $k$. We denote by $\hat{C}^\pm$ the curve on $\hat{S}$ that was produced by gluing $\tilde{C}^\pm_1$ to $\tilde{C}^\pm_2$. Then $\hat{S}$ has a natural generalized component decomposition $\hat{\mathcal{C}}$. We let $\hat{\mathcal{C}}_0=\{\hat{C}^+,\hat{C}^-\}$.

The above operation induces a representation $\hat{f}\from\pi_1(\hat{S})\to\Gamma$, and we claim that it is a QF representation. We orient the curves $\hat{C}^\pm$ such that for any $c\in\pi_1(\hat{S})$, whenever $c$ is in the conjugacy class of $\hat{C}^\pm$, we have $\hat{f}(c)$ lie in the conjugacy class of $A^k_\gamma$ in $\Gamma$. Now we consider a representation $\hat{f}_1\from\pi_1(\hat{S})\to\PSL(2,\C)$ obtained by changing the bending angle along $\hat{C}^\pm$ such that $\hat{f}_1$ is an $(R,\epsilon)$-good representation. Then by Theorem \ref{thm:bending} and Condition 5 for the $\tilde{S}_i$, we know that $\hat{f}(\pi_1(\hat{S}))$ is a QF subgroup of $\Gamma$. 

Finally, $\hat{f}(\pi_1(\hat{S}))<\Gamma$ is maximal, via the same argument as \cite[Theorem 4.2]{kahn2012counting}. We restate the result as the following theorem.

\begin{theorem}\label{thm16}
    Let $\hat{f}\from\pi_1(\hat{S})\rightarrow\Gamma$ be the quasi-Fuchsian representation as above. Then $\hat{G}=\hat{f}(\pi_1(\hat{S}))$ is a maximal QF surface subgroup, that is, there is no intermediate QF surface subgroup between $\Gamma$ and $\hat{G}$.
\end{theorem}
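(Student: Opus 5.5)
We argue by contradiction, following the strategy of \cite[Theorem 4.2]{kahn2012counting}. Suppose $\hat{G}\subsetneq G'\subset\Gamma$ with $G'$ a QF surface subgroup. The first step is to show that $[G':\hat{G}]$ is finite. Both groups are quasi-Fuchsian with quasicircle limit sets. Since $\hat{G}\le G'$, we have $\Lambda(\hat{G})\subseteq\Lambda(G')$, and a quasicircle contained in a quasicircle must equal it. Hence the two groups have the same limit set. Moreover, $\hat G$ is a closed surface group, so it acts cocompactly on the convex hull of this common limit set. Therefore $\hat{G}$ has finite index in $G'$.

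So $\hat{S}\to S'=\H^2/G'$ (topologically) is a nontrivial finite cover of degree $d>1$. The aim is to derive a contradiction from the geometry of the bending. The key structural fact is that $\hat{f}$ has two kinds of curves in its generalized component decomposition:
\begin{itemize}
\item along $\hat{C}^\pm$ the bending angle is close to $\pi/2$;
\item along every other curve the pieces are $(R,\epsilon)$-well-matched, with bending at most $100\epsilon$.
\end{itemize}
The deck group of the cover (or, if the cover is not regular, the commensurator $G'$ acting on $\H^3$) preserves the pleating/bending data up to small error. Concretely, I would pass to the $\pi_1$-equivariant boundary map. The lifts of $\hat{C}^\pm$ to $\H^3$ are precisely the geodesics where the quasi-plane $\partial\widetilde{\hat S}$ has a "crease" of angle near $\pi/2$. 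By the near-geodesic estimates from \cite[Theorem 2.2]{kahn2021nearly} and Theorem \ref{thm:bending}, this crease set is determined by the limit set, up to a small quasiconformal error. Hence any $g\in G'$ permutes the lifts of $\hat{C}^\pm$. Thus the union of $\hat{C}^+$ and $\hat{C}^-$ is the full preimage of a multicurve on $S'$.

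Next comes a counting argument. Let $c$ be the image of $\hat{C}^\pm$ in $S'$. The element $A_\gamma^k$ corresponding to $\hat{C}^+$ lies in $\hat{G}$, and its stabilizer in $G'$ is cyclic, generated by some root of $A_\gamma^k$ inside $\Gamma$. Since $A_\gamma$ is primitive, that root is $A_\gamma^j$ with $j\mid k$. The covering restricted to $\hat C^+\cup\hat C^-$ has total degree $d$ over $c$. Using the degrees, $d=2k/j$ or $d=k/j$ (depending on whether $\hat C^\pm$ map to one curve or two).

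Then I would cut $S'$ along $c$. The preimage of $S'\setminus c$ consists of $\tilde S_1'$ and $\tilde S_2'$, which are distinguishable because the crease direction (bending $+\pi/2$ versus the sides) distinguishes which side is $\tilde S_1$-type. So $S'\setminus c$ has pieces $Y_1,Y_2$ with $\tilde S_i'\to Y_i$ covers of degree $d$. Regluing $Y_i$ along its boundary recovers a surface $S_i'$ covered by $\tilde S_i$ with degree $d$. The pairing is consistent because the gluing on $\hat S$ came from $\tilde S_i$, and $f_i(\pi_1(S'_i))\supseteq \tilde f_i(\pi_1(\tilde S_i))$. Both groups contain $f_i(\pi_1(S_i))$-commensurable elements.

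One then shows that $\tilde S_i\to S_i'$ is an intermediate cover of $\tilde S_i\to S_i$. Its subgroup must coincide with, or be commensurable into, $f_i(\pi_1(S_i))$ by the same limit-set argument as in the first step, which gives equality of limit sets and then containment within the QF group's maximal extension. Primitivity of $\tilde S_i\to S_i$ forces $S_i'=S_i$ or $S_i'=\tilde S_i$. In the first case, $\tilde C_i$ covers $C_i$ with degree $k<n$, which is incompatible with $d=n$ and the degree computation on $c$. In the second case, $d=1$. Either way this is a contradiction.

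The main obstacle is the rigidity step: showing that $G'$ must preserve the set of lifts of $\hat C^\pm$. Here I would use that a $K_1$-QF surface with bends near $\pi/2$ along a sparse set (collar width $\ge D$) and nearly flat elsewhere has a canonical "large-bend" geodesic set, determined coarsely by the limit set.
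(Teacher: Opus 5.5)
The paper gives no proof beyond citing \cite[Theorem 4.2]{kahn2012counting}. Your skeleton is a natural rendering of that template: finite index, push the $\pi/2$-creases down to a multicurve on $S'$, cut and reassemble covers $\tilde S_i\to S_i'$, then contradict primitivity and $k<n$. However, the two load-bearing steps are not established.

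\emph{The rigidity step.} It is asserted, not proved, and it carries the whole theorem. Saying the crease set is ``determined by the limit set up to small quasiconformal error'' cannot give that $g\in G'$ maps lifts of $\hat C^\pm$ \emph{exactly} onto lifts of $\hat C^\pm$.

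The reduction you need is this. Since $[G':\hat G]<\infty$, some power of $gA_\gamma^k g^{-1}$ lies in $\hat G$. So $g$ carries a crease axis to the axis of an element of $\hat G$ that is $\Gamma$-conjugate into $\langle A_\gamma\rangle$.

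Such axes are not only the crease axes. The other lifts of $C_i$ inside $\tilde S_i'$ (total degree $n-k$) give them too, and so can any further curves of $S_i$ or of $\hat S$ that map to $\gamma$. Along the lifts of $C_i$ the surface is nearly flat.

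So you must attach to each such axis a quantity that $g$ preserves. One candidate is the coarse angle that a fixed component of $\hat{\C}\setminus\hat\Lambda$ subtends at the fixed point, at all scales. This is periodic in scale under the loxodromic and is transported by the conformal map $g$. You would then have to prove two things:
\begin{enumerate}
\item the angle is uniformly close to $\pi/2$ at endpoints of lifts of one of $\hat C^\pm$, and close to $3\pi/2$ for the other, since the two creases bend in opposite senses; this uses the width-$D$ collars, Theorem~\ref{thm:bending} and stability of quasi-geodesics;
\item the angle fails this at some scale for every other candidate, including axes that pass near a crease.
\end{enumerate}
The same signed invariant is also what rules out $G'$ exchanging $\hat C^+$ with $\hat C^-$, which is your ``$d=2k/j$'' case. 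Your ``crease direction'' remark does not do this.

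\emph{The final step.} It fails as written. Let $G_i=f_i(\pi_1(S_i))$, and let $G_i'$ be the group of the reglued $S_i'$. Equality of limit sets only gives $G_i'\subseteq\mathrm{Stab}_\Gamma(\Lambda(G_i))$. This stabilizer can be strictly larger than $G_i$, because the $S_i$ were obtained by passing to finite covers. So primitivity of $\tilde S_i\to S_i$ says nothing about $\tilde S_i\to S_i'$.

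Use the crease curve instead.
\begin{enumerate}
\item The image of $\tilde C_i$ in $S_i'$ is a simple closed curve represented by the $d$-th root $A_\gamma^{k/d}$.
\item This element lies in $\langle A_\gamma\rangle\subseteq G_i$. For $d>1$ it is not in $\tilde G_i$, because $\tilde C_i$ is primitive in $\tilde G_i$.
\item Hence $\tilde G_i\subsetneq G_i\cap G_i'$, and primitivity gives $G_i\subseteq G_i'$.
\item Then $A_\gamma\in G_i'$ lies in the maximal cyclic subgroup $\langle A_\gamma^{k/d}\rangle$, which forces $d=k$.
\item But $d=[G_i':\tilde G_i]\geq [G_i:\tilde G_i]=n>k$, a contradiction.
\end{enumerate}
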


\subsection{Counting Maximal QF Surface Subgroups}

In this subsection, we will count the number of maximal QF surface subgroups we can produce as in Section \ref{sec:amalgamating} and then prove the lower bound in Theorem \ref{thm1}.

Suppose that $g_0\geq 2$ is the genus of the closed surfaces $S_1$ and $S_2$. Then the Euler characteristic of covers $\tilde{S}_1$ and $\tilde{S}_2$ of degree-$n$ is $$\chi(\tilde{S}_1)=\chi(\tilde{S}_2)=n\chi(S_1)=n\chi(S_2) = n(2-2g_0).$$
After cutting $\tilde{S}_1$ and $\tilde{S}_2$ along the non-separating curves $\tilde{C}_1$ and $\tilde{C}_2$, the Euler characteristic does not change. Gluing the two surfaces together to get $\hat{S}$ of genus $\hat{g}$, the Euler characteristic satisfies
$$\chi(\hat{S})=2-2\hat{g}=\chi(\tilde{S}_1)+\chi(\tilde{S}_2)=2n(2-2g_0),$$
\begin{equation}\label{eq14}
    \hat{g}=n(2g_0-2)+1,\qquad n=\frac{\hat{g}-1}{2g_0-2}.
\end{equation}

In \cite[Section 4.4]{muller2002character}, it is shown that, for the surface group $\Gamma_0$ of genus $g_0$, the number of maximal subgroups in $\Gamma_0$ of index $n$ satisfies that 
\begin{equation}\label{eq15}
    m_n(\Gamma_0) = 2n(n!)^{2g_0-2}(1+o(1)),
\end{equation}
for sufficiently large $n$. Let $\Gamma_0=\pi(S_0)$ and $C_0$ be a non-separating simple closed curve on $S_0$. For $1\leq k\leq n$, let $m_n(\Gamma_0,C_0,k)$ be the number of maximal degree-$n$ covers of $S_0$ such that the curve $C_0$ has a lift of degree $k$. Then $m_n(\Gamma_0,C_0,k)$ does not depend on the choice of $C_0$, so we write it as $m_n(\Gamma_0,k)$. We recall the following theorem from \cite{kahn2012counting}.

\begin{theorem}[{\cite[Theorem 4.3]{kahn2012counting}}]\label{thm23}
    For some $1\leq k\leq n-1$, $k=k(n,g_0)$, we have
    \begin{equation}\label{eq:m_n}
        m_n(\Gamma_0,k)\geq(n!)^{2g_0-2}.
    \end{equation}
\end{theorem}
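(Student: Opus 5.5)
The plan is a pigeonhole argument over the possible degrees $k$, with the asymptotic count \eqref{eq15} of maximal subgroups as input. The only extra ingredient needed is that the covers in which $C_0$ lifts with full degree $n$ form a negligible fraction.

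\textbf{Setting up the count.} I will use the standard dictionary between index-$n$ subgroups of $\Gamma_0$ and transitive permutation representations $\phi\colon\Gamma_0\to S_n$. The index-$n$ subgroups correspond to transitive actions, with the subgroup being the stabilizer of the point $1$, taken up to relabeling of $\{2,\dots,n\}$. A subgroup is maximal exactly when the action is primitive. Choose standard generators $a_1,b_1,\dots,a_{g_0},b_{g_0}$ with $a_1$ representing $C_0$; this is allowed since $m_n(\Gamma_0,k)$ is independent of $C_0$. The lifts of $C_0$ to the cover correspond to the cycles of $\phi(a_1)$, and their degrees are the cycle lengths. Every cover therefore has at least one lift of some degree $k\in\{1,\dots,n\}$, which gives
\begin{equation}
    \sum_{k=1}^{n} m_n(\Gamma_0,k)\ \geq\ m_n(\Gamma_0).
\end{equation}
The value $k=n$ occurs only when $\phi(a_1)$ is an $n$-cycle, and in that case it is the only value that occurs.

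\textbf{Bounding the $n$-cycle covers.} Let $N_n$ be the number of index-$n$ subgroups, maximal or not, for which $\phi(a_1)$ is an $n$-cycle; it bounds $m_n(\Gamma_0,n)$ from above. I would estimate it with Frobenius' character formula. The number of homomorphisms $\Gamma_0\to S_n$ with $a_1\mapsto x$ for a fixed $x$ is
\begin{equation}
    (n!)^{2g_0-2}\,|Z(x)|\sum_{\chi}\frac{|\chi(x)|^2}{\chi(1)^{2g_0-2}} .
\end{equation}
Take $x$ to be an $n$-cycle, so $|Z(x)|=n$. By Murnaghan--Nakayama, $\chi(x)\in\{0,\pm1\}$, and $\chi(x)\neq0$ only for the hook characters, whose degrees are $\binom{n-1}{j}$. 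The sum is therefore $2+o(1)$, coming from the trivial and sign characters. Multiplying by the $(n-1)!$ choices of $n$-cycle $x$ and dividing by $(n-1)!$ for relabeling, I expect
\begin{equation}
    N_n\leq (2+o(1))\,n\,(n!)^{2g_0-2}\cdot\tfrac{1}{n}\cdot(\text{bounded factor}).
\end{equation}
Concretely, the target is $N_n=O\bigl((n!)^{2g_0-2}\bigr)$, i.e.\ $N_n=o\bigl(m_n(\Gamma_0)\bigr)$ by \eqref{eq15}. \textbf{This is the step I expect to be the main obstacle:} the factors of $n$ from $|Z(x)|$, from the choice of cycle, and from the relabeling must be tracked exactly. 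If the bound only comes out as roughly $m_n(\Gamma_0)/2$, that still suffices, as shown below.

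\textbf{Pigeonhole.} Subtracting the $k=n$ term,
\begin{equation}
    \sum_{k=1}^{n-1} m_n(\Gamma_0,k)\ \geq\ m_n(\Gamma_0)-N_n\ \geq\ (2n-O(1))(n!)^{2g_0-2}(1+o(1)).
\end{equation}
Some $k\in\{1,\dots,n-1\}$ therefore satisfies
\begin{equation}
    m_n(\Gamma_0,k)\ \geq\ \frac{(2n-O(1))(1+o(1))}{n-1}(n!)^{2g_0-2}\ \geq\ (n!)^{2g_0-2}
\end{equation}
for $n$ large. Even the weaker estimate $N_n\leq(1+o(1))\,n\,(n!)^{2g_0-2}$ leaves a factor $\frac{n(1+o(1))}{n-1}$, which is still at least $1$. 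Small $n$ is harmless here, since the theorem is only applied for large $n$ (equivalently, large genus).
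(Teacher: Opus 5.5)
Your strategy is sound: discard the subgroups in which $a_1$ acts as an $n$-cycle, then pigeonhole over $k\in\{1,\dots,n-1\}$ using \eqref{eq15}. But the step you flag as the obstacle is not carried out, and as written it fails. Your Frobenius-type count has a spurious factor $|Z(x)|$. The number of homomorphisms $\phi\colon\Gamma_0\to S_n$ with $\phi(a_1)=x$ is
\[
(n!)^{2g_0-2}\sum_{\chi}\frac{|\chi(x)|^2}{\chi(1)^{2g_0-2}}.
\]
Two checks confirm this: at genus one it must equal $|Z(x)|$, which column orthogonality already supplies, and summing over $x$ recovers Mednykh's formula $(n!)^{2g_0-1}\sum_\chi\chi(1)^{2-2g_0}$.

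Taken literally, your formula gives $N_n=(2+o(1))\,n\,(n!)^{2g_0-2}$, which is asymptotically all of $m_n(\Gamma_0)$, so the subtraction leaves nothing. The compensating factor $\tfrac1n\cdot(\text{bounded factor})$ you insert has no source. With the correct prefactor, your Murnaghan--Nakayama evaluation gives $(2+o(1))(n!)^{2g_0-2}$ homomorphisms for each $n$-cycle $x$. All of these are transitive, and there are $(n-1)!$ choices of $x$. Each index-$n$ subgroup arises from exactly $(n-1)!$ transitive homomorphisms, since relabeling by the stabilizer of $1$ acts freely on them. Hence $N_n=(2+o(1))(n!)^{2g_0-2}$. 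Then $\sum_{k=1}^{n-1}m_n(\Gamma_0,k)\ge m_n(\Gamma_0)-N_n=(2n+o(n))(n!)^{2g_0-2}$, so some $k\le n-1$ has $m_n(\Gamma_0,k)\ge(2+o(1))(n!)^{2g_0-2}$, which is what you want.

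Your fallback does not rescue the argument. The elementary bound $N_n\le n\,(n!)^{2g_0-2}$ is correct: fix the $n$-cycle $x$ and $a_2,\dots,b_{g_0}$; the relation then prescribes $[x,b_1]$, and the solutions $b_1$ form at most one coset of $Z(x)$. But this bound only removes about half of $m_n(\Gamma_0)$, and the pigeonhole then gives only $m_n(\Gamma_0,k)\ge\frac{n}{n-1}(1+o(1))(n!)^{2g_0-2}$. Concluding that this is at least $(n!)^{2g_0-2}$ requires the error term in \eqref{eq15} to be $o(1/n)$ (or one-sided), and the bare $o(1)$ in \eqref{eq15} does not give that. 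So with \eqref{eq15} as your only input, you genuinely need the sharper estimate $N_n=O\bigl((n!)^{2g_0-2}\bigr)$ coming from the corrected character formula above.
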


Now we state the lower bound as follows.

\begin{theorem}\label{thm24}
    Let $M=\mathbb{H}^3/\Gamma$ be a cusped hyperbolic $3$-manifold. For sufficiently large $g$, the number of commensurability classes of quasi-Fuchsian surface subgroups in $\Gamma$ of genus at most $g$ satisfies 
    \begin{equation}
        n(g,M)\geq(C_2g)^{2g},
    \end{equation} for some constant $C_2>0$ depending only on $\Gamma$. 
\end{theorem}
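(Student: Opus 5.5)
We follow the counting scheme of \cite[Section 4]{kahn2012counting}. We fix once and for all the data of Section \ref{sec:amalgamating}: the good surfaces $S_1,S_2$ of common genus $g_0$, the non-separating curves $C_1,C_2$ and the primitive element $A_\gamma$. All of these depend only on $\Gamma$, once $\epsilon$ and $R$ are fixed. For each large $n$, Theorem \ref{thm23} supplies some $k=k(n,g_0)\in\{1,\dots,n-1\}$ for which there are at least $(n!)^{2g_0-2}$ maximal (primitive) degree-$n$ covers $\tilde S_1\to S_1$ in which $C_1$ has a degree-$k$ lift. Since $m_n(\Gamma_0,k)$ does not depend on the curve, the same $k$ gives at least $(n!)^{2g_0-2}$ such covers $\tilde S_2\to S_2$ with a degree-$k$ lift of $C_2$. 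We choose a lift $\tilde C_i$ in each cover and perform the amalgamation of Section \ref{sec:amalgamating}. This yields at least $(n!)^{2(2g_0-2)}$ quasi-Fuchsian representations $\hat f$, each of genus $\hat g=n(2g_0-2)+1$ by \eqref{eq14}, and by Theorem \ref{thm16} each image $\hat G$ is a maximal QF surface subgroup of $\Gamma$.

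Next we pass from pairs of covers to commensurability classes. For maximal surface subgroups of the same genus, commensurability reduces to conjugacy. Indeed, if $\hat G$ and $\hat G'$ are commensurable, then up to conjugacy both lie in the commensurator of $\hat G$ in $\Gamma$. For a QF (hence geometrically finite, non-fiber) surface subgroup of $\Gamma$, this commensurator is a finite-index extension, and it is again a QF surface subgroup, possibly after passing to a torsion-free surface subgroup. Maximality then forces $\hat G$ and $\hat G'$ to be conjugate. It therefore suffices to bound how many pairs $(\tilde S_1,\tilde S_2)$, together with the choice of lifts, can produce the same conjugacy class of $\hat G$.

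For the multiplicity bound we argue as Kahn--Markovi\'c do. From $\hat G$ we recover the decomposition $\hat S=\tilde S_1'\cup\tilde S_2'$, because the curves $\hat C^\pm$ are exactly the places where the bending angle is near $\pi/2$, and all other curves of the generalized component decomposition are bent by nearly $0$. The subgroups $\pi_1(\tilde S_i)$ are then determined inside $\pi_1(S_i)$ up to conjugation and finitely many choices. These choices are the ordering of the pieces, the twist (only $k\le n$ options), the choice of lift, and the conjugacy of the cover. Each contributes at most a polynomial factor in $n$, say $n^{c}$, apart from conjugacy, which costs a further factor of at most $n$ per cover. Hence the number of distinct commensurability classes of genus $\hat g$ is at least $(n!)^{4g_0-4}/n^{c}$.

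Finally we convert to $g$. By Stirling's formula, $(n!)^{4g_0-4}\ge (n/e)^{n(4g_0-4)}=(n/e)^{2(\hat g-1)}$. Since $n=(\hat g-1)/(2g_0-2)$, this is at least $(c\hat g)^{2\hat g}$ for a constant $c=c(g_0)$, after absorbing $n^{c}$ and the shift by $1$ into the constant. For arbitrary large $g$, let $\hat g\le g$ be the largest value of the form $n(2g_0-2)+1$; then $g-\hat g<2g_0-2$. It follows that $n(g,M)\ge (c\hat g)^{2\hat g}\ge (C_2g)^{2g}$, since losing a bounded number of powers of $g$ only changes the constant. The main obstacle is the multiplicity step. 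We must make rigorous that the $\pi/2$-bent curves $\hat C^\pm$ are intrinsically determined by $\hat G$, so that distinct pairs of primitive covers give non-conjugate subgroups up to a polynomial factor. For this we would first try to reproduce the argument of \cite[Section 4]{kahn2012counting}, using the near-geodesic structure guaranteed by \cite[Theorem 2.2]{kahn2021nearly}.
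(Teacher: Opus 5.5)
Your proposal follows the paper's proof essentially step for step. It uses the same $k=k(n,g_0)$ from Theorem~\ref{thm23} for both $S_1$ and $S_2$, amalgamates pairs of primitive degree-$n$ covers into maximal QF subgroups of genus $\hat g=n(2g_0-2)+1$, gets the count $(n!)^{4g_0-4}$, applies Stirling, and interpolates in $g$. The one difference is that you explicitly handle two points the paper leaves implicit, relying on \cite{kahn2012counting}: that commensurable maximal subgroups are conjugate, and a polynomial-in-$n$ multiplicity loss (harmless for a bound of the form $(C_2g)^{2g}$), including the intrinsic recovery of $\hat C^\pm$ from $\hat G$ that you flag as the main obstacle. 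The paper instead simply takes $m_n(\pi_1(S_1),k)\,m_n(\pi_1(S_2),k)$ as the number of distinct maximal subgroups.
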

\begin{proof}
    By \eqref{eq14} and \eqref{eq:m_n}, for sufficiently large $n$, we choose $1\leq k\leq n-1$ such that for $i=1,2$, $m_n(\pi_1(S_i),k)\geq (n!)^{2g_0-2}$. We then amalgamate any two maximal covers as in Section \ref{sec:amalgamating} and produce a maximal surface subgroup of genus $\hat{g}$. Then the number of maximal surface subgroups of genus $\hat{g}$ is bounded below by
    \begin{equation}
        \begin{aligned}
            m_n(\pi_1(S_1),k)\cdot m_n(\pi_1(S_2),k)&\geq ((n!)^{2g_0-2})^2
            >\left(\sqrt{n}\left(\frac{n}{e}\right)^n\right)^{4g_0-4}\\
            &= n ^ {2g_0-2}\left(\frac{n}{e}\right)^{2\hat{g}-2}
            = \left(\frac{\hat{g}-1}{2g_0-2}\right) ^ {2g_0-2}\left(\frac{\hat{g}-1}{e(2g_0-2)}\right)^{2\hat{g}-2}\\
            &\geq(\hat{C}_2\hat{g})^{2\hat{g}+2g_0-4},
        \end{aligned}
    \end{equation}
    for some constant $\hat{C}_2>0$. Therefore for any $g$ sufficiently large, let $n$ be the positive integer such that $$n(2g_0-2)+1\leq g<(n+1)(2g_0-2)+1.$$
    Let $\hat{g}=n(2g_0-2)+1$, then we have
    \begin{equation}
        n(g,M)\geq(\hat{C}_2\hat{g})^{2\hat{g}+2g_0-4}\geq(C_2 g)^{2g},
    \end{equation}
    for some constant $C_2>0$.
\end{proof}

\begin{remark}
    In \cite[Section 4.2]{kahn2012counting}, there is a typo in the calculation of $g_n$. See \eqref{eq14} for the correct genus $g_n$ (here denoted by $\hat{g}$).
    In \cite[Theorem 4.3]{kahn2012counting}, there are some typos in the exponent of $n!$ and $(n-1)!$, and a missing term in the estimate for $m_n(\Gamma_0)$. See \eqref{eq15} and \eqref{eq:m_n} for the correct estimates for $m_n(\Gamma_0)$ and $m_n(\Gamma_0,k)$.
\end{remark}

Therefore, Theorem \ref{thm1} follows from Theorem \ref{thm8} and Theorem \ref{thm24}.

%% file: pA.tex
\section{Counting Purely Pseudo-Anosov Surface Subgroups}\label{sec:MCG}

In this section, we present a lower bound for the number of commensurability classes of surface subgroups in mapping class groups, by combining our lower bound from Theorem \ref{thm24} with Kent--Leininger's construction of surface subgroups in mapping class groups.

Let $M_8$ be the figure-eight knot complement, $S_{h,0}$ the closed surface of genus $h$ for $h\geq 4$, and $S_{1,3}$ the thrice-punctured torus. In \cite{kent2024atoroidal}, Kent and Leininger construct a type-preserving homomorphism as follows.
\begin{theorem}[\citenum{kent2024atoroidal}, Theorem 2]
    There is a type-preserving representation $\Delta$ from the fundamental group $\pi_1(M_{8})$ of the figure-eight knot complement to the mapping class group $\mathrm{Mod}(S_{1,3})$.
\end{theorem}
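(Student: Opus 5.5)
This statement is Theorem 2 of Kent--Leininger \cite{kent2024atoroidal}, which we use as a black box. If one wanted to reprove it, the plan would be a Birman exact sequence construction applied to the fibration of the figure-eight knot complement. I have not checked the details, and I cannot account for the third puncture in $S_{1,3}$, so the sketch below should be read as a guess at the mechanism rather than a reconstruction of their proof.

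\emph{Step 1: build the homomorphism.} The manifold $M_8$ fibers over the circle with fiber the once-punctured torus $S_{1,1}$ and Anosov monodromy $\phi$ induced by $\begin{pmatrix}2&1\\1&1\end{pmatrix}\in\mathrm{SL}(2,\Z)\cong\mathrm{Mod}(S_{1,1})$. Hence
\[
\pi_1(M_8)\cong\pi_1(S_{1,1})\rtimes_{\phi}\Z .
\]
Filling in the puncture of the fiber and marking a point, the point-pushing map of the Birman exact sequence sends $\pi_1(S_{1,1})$ into a mapping class group of a torus with punctures. A lift of $\phi$ to that group normalizes the point-pushing subgroup and acts on it as $\phi_*$. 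So the semidirect product maps into the mapping class group. One would arrange the marked points so that the target is $\mathrm{Mod}(S_{1,3})$; as noted, I do not see why a third puncture is needed, and I would look to \cite{kent2024atoroidal} for that choice and for how it is forced.

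\emph{Step 2: peripheral elements.} Check that the peripheral $\Z^2$ of $\pi_1(M_8)$ goes to reducible mapping classes. The loop around the puncture of the fiber pushes to a multitwist about the curve bounding the relevant punctures. The lift of $\phi$ can be chosen to fix that curve as well. So the whole cusp subgroup preserves a curve system, and parabolics map to reducible, non-pseudo-Anosov classes.

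\emph{Step 3: loxodromic elements (the main obstacle).} Show that every loxodromic element maps to a pseudo-Anosov class. For elements of the fiber subgroup this is Kra's theorem: a point-push along a loop is pseudo-Anosov exactly when the loop fills. One must check that non-peripheral loops in the punctured torus correspond to filling loops in the filled-in surface, or handle the exceptions directly. For elements $(\gamma,\phi^n)$ with $n\neq0$, I would use the Nielsen--Thurston classification through the mapping torus. The mapping torus of the image class is obtained from $M_8$ by drilling the closed orbit or orbits traced by the marked points. By Thurston's hyperbolization for fibered manifolds, the class is pseudo-Anosov if and only if this drilled manifold is atoroidal. The expected criterion is that drilling a loxodromic closed curve from $M_8$ leaves a hyperbolic manifold, while drilling a peripheral curve creates an essential torus. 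The hard work is making this precise: identifying the drilled curves with geodesic representatives of the loxodromic elements, and handling non-primitive and non-simple elements, where an extra marked point may help.

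\emph{Step 4: injectivity.} Injectivity is routine. The fiber subgroup injects by the Birman exact sequence, and $\phi$ has infinite order in the quotient. Neither injectivity nor type preservation uses anything earlier in this paper.
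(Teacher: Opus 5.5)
Citing Kent--Leininger as a black box is exactly what the paper does. It gives no proof of this statement, so at the level of citation your proposal agrees with it. The mechanism you sketch, however, cannot be completed into a proof, and the obstruction is not the bookkeeping of a third puncture.

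First, if you literally fill in the cusp, you are pushing a point on a closed torus, where the point-pushing subgroup of $\mathrm{Mod}(T^2,p)$ is trivial. You have to keep the cusp as a puncture and add a marked point. This gives the Birman embedding $\pi_1(M_8)\cong\pi_1(S_{1,1})\rtimes_\phi\Z\hookrightarrow\mathrm{Mod}(S_{1,2})$. That map is injective and sends the cusp group to reducible classes, but it is \emph{not} type-preserving.

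Take a nonseparating simple closed curve $\gamma$ in the fiber. It is not peripheral in the fiber, hence loxodromic in $\pi_1(M_8)$. Yet $\mathrm{Push}(\gamma)=T_{\gamma_1}T_{\gamma_2}^{-1}$, where $\gamma_1,\gamma_2$ are the boundary curves of an annular neighborhood of $\gamma$. These are non-isotopic essential curves in $S_{1,2}$, since the two annuli they cobound in the torus contain the marked point and the cusp, respectively. So the image is a nontrivial multitwist, which is reducible. By Kra's theorem the same happens for every non-filling loop in the fiber, such as powers of simple curves or loops disjoint from some simple curve. Thus the check in your Step 3 that non-peripheral loops fill is false. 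The ``exceptions'' are counterexamples to type preservation, not cases to be handled separately.

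You also have the difficulty reversed. The elements with $n\neq 0$, which you call the main obstacle, are the easy ones. A lift $\psi$ of the Anosov class $\phi^n$ to $\mathrm{Mod}(S_{1,2})$ cannot be periodic. If it is reducible, forgetting the marked point shows that its canonical reduction system can only be the curve bounding a disk containing both punctures. That happens exactly when the element is conjugate into the cusp subgroup, so no drilling argument is needed.

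The entire content of the theorem is therefore to change how the fiber subgroup acts, so that non-filling fiber loops also become pseudo-Anosov. Your sketch contains no idea for this. It is precisely what Kent--Leininger's construction must supply, so you cannot keep the Birman mechanism and defer only the third puncture to them.

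Finally, injectivity (your Step 4) is not part of the statement and is not used in the paper. Type preservation alone makes $\Delta$ faithful on every quasi-Fuchsian subgroup, since all its nontrivial elements are loxodromic and therefore map to pseudo-Anosov, hence nontrivial, classes.
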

A \textit{type-preserving} representation is a homomorphism that takes peripheral (i.e., parabolic) elements to reducible mapping classes and hyperbolic elements to pseudo-Anosov mapping classes. There are infinitely many commensurability classes of totally geodesic closed immersed surfaces $S_i$ in the figure-eight knot complement \cite{maclachlan1986fuchsian}. Kent and Leininger [\citenum{kent2024atoroidal}, Corollary 19] give infinitely many commensurability classes of purely pseudo-Anosov closed surface subgroups of $\mathrm{Mod}(S_{1,3})$ by mapping $\pi_1(S_i)$ injectively into $\mathrm{Mod}(S_{1,3})$. 
Therefore, by passing to some finite-index subgroups of $\mathrm{Mod}(S_{1,3})$, the number of commensurability classes of purely pseudo-Anosov closed surface subgroups of $\mathrm{Mod}(S_{h,0})$ is also infinite.

Using the growth rate of maximal Fuchsian subgroups in $\pi_1(M_9)$ in \cite{vulakh1991classification}, they obtain the following. 

\begin{theorem}[\citenum{kent2024atoroidal}, Theorem 6]\label{thm26}
    The number of commensurability classes of purely pseudo-Anosov subgroups of $\mathrm{Mod}(S_{h,0})$ for all $h\geq 4$ and $\mathrm{Mod}(S_{1,3})$ that are isomorphic to the fundamental group of a surface of genus at most $g$ is bounded below by a strictly increasing linear function of $g$.
\end{theorem}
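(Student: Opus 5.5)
The plan is to take an explicit linearly growing family of totally geodesic closed surfaces in $M_8$ and push their fundamental groups into the mapping class group through the type-preserving representation $\Delta$ of Kent--Leininger. The argument has three steps: produce and count the surfaces in $M_8$, transfer them to $\mathrm{Mod}(S_{1,3})$ without losing the count, and then transfer to $\mathrm{Mod}(S_{h,0})$.

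\emph{Step 1: surfaces in $M_8$.} Recall that $\pi_1(M_8)$ has finite index in the Bianchi group $\mathrm{PSL}(2,\mathcal{O}_3)$. Vulakh's classification \cite{vulakh1991classification} describes the maximal Fuchsian subgroups of such groups: they are stabilizers of circles given by integral binary Hermitian forms, indexed by a discriminant $D$. I would use two facts about them.
\begin{itemize}
\item The covolume of the stabilizer of discriminant $D$ is bounded above by $cD$ for an absolute constant $c$. I expect this to follow from the explicit area formula, which is $D$ times a product of local factors that are bounded on square-free $D$.
\item Distinct square-free $D$ (in a suitable arithmetic progression) give non-commensurable groups. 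This is the Maclachlan--Reid \cite{maclachlan1986fuchsian} invariant: the associated quaternion algebras over $\mathbb{Q}$ are distinct, and commensurability preserves the quaternion algebra.
\end{itemize}
Intersecting with $\pi_1(M_8)$ and then passing to a torsion-free, cocompact finite-index subgroup (taking $D$ with the algebra a division algebra, so there are no cusps) changes the genus only by a bounded multiplicative factor. Each $D$ therefore yields a closed totally geodesic surface $S_D$ with genus at most $c'D$. Since there are $\asymp X$ admissible square-free $D\le X$, the number of commensurability classes of genus at most $g$ is at least $\lfloor g/c''\rfloor$. After adjusting constants this is bounded below by a strictly increasing linear function of $g$.

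\emph{Step 2: transfer to $\mathrm{Mod}(S_{1,3})$.} Each $\pi_1(S_D)$ is a closed surface group inside $\pi_1(M_8)$, so every nontrivial element is loxodromic. Because $\Delta$ is type-preserving, every nontrivial element of $\Delta(\pi_1(S_D))$ is therefore pseudo-Anosov. By \cite[Corollary 19]{kent2024atoroidal}, the restriction of $\Delta$ to $\pi_1(S_D)$ is injective, so its image is a purely pseudo-Anosov genus-$\le g$ surface subgroup.

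I expect the main obstacle to be preserving non-commensurability under $\Delta$. If $\Delta(\pi_1(S_D))$ and $\Delta(\pi_1(S_{D'}))$ are commensurable in $\mathrm{Mod}$, I would pull back through $\Delta$. The kernel of $\Delta$ meets each surface group trivially, so the preimages $\pi_1(S_D)\ker\Delta$ and $\pi_1(S_{D'})\ker\Delta$ are commensurable. If $\Delta$ is injective on all of $\pi_1(M_8)$ (as I believe Kent--Leininger show), then $\pi_1(S_D)$ and $\pi_1(S_{D'})$ are commensurable up to conjugacy, contradicting Step 1. Otherwise I would compare limit-set or invariant-trace-field data of the images to rule out commensurability directly.

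\emph{Step 3: transfer to $\mathrm{Mod}(S_{h,0})$.} For $h\ge4$, a finite-index subgroup of $\mathrm{Mod}(S_{1,3})$ embeds in $\mathrm{Mod}(S_{h,0})$ via a characteristic branched or unbranched cover of $S_{1,3}$ that fills the punctures, as used by Kent--Leininger. This embedding sends pseudo-Anosov classes to pseudo-Anosov classes. Intersecting each surface subgroup with this finite-index subgroup multiplies genus by a bounded factor, and commensurability classes stay distinct because the map is injective. The linear lower bound survives after rescaling constants.
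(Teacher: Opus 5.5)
Your overall architecture (a Vulakh-type count of totally geodesic surfaces in $M_8$, pushed forward by $\Delta$, then passed to finite index to reach $\mathrm{Mod}(S_{h,0})$) matches the paper's account of Kent--Leininger. You also correctly flag the transfer of non-commensurability as the crux, but your argument for that step has a genuine gap.

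Commensurability in $\mathrm{Mod}(S_{1,3})$ is up to conjugacy: $\Delta(\pi_1 S_D)$ and $\Delta(\pi_1 S_{D'})$ are commensurable as soon as $\Delta(\pi_1 S_D)\cap \phi\,\Delta(\pi_1 S_{D'})\,\phi^{-1}$ has finite index in both for \emph{some} $\phi\in\mathrm{Mod}(S_{1,3})$.

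\begin{itemize}
\item Pulling back through $\Delta$ only handles $\phi\in\Delta(\pi_1(M_8))$, which is an infinite-index subgroup. Nothing in your argument excludes a mapping class outside the image conjugating a finite-index subgroup of one image into the other.
\item Injectivity of $\Delta$ is beside the point. It is in fact automatic: $\ker\Delta$ is a normal subgroup of the torsion-free non-elementary group $\pi_1(M_8)$ containing no loxodromics, hence trivial.
\item Your fallback also fails. The trace field of $\pi_1(S_D)$ and its limit set in $\partial\mathbb{H}^3$ are not invariants of the $\mathrm{Mod}$-conjugacy class of $\Delta(\pi_1 S_D)$. Moreover, all closed surface groups are abstractly commensurable, so no invariant of the abstract group can help.
\end{itemize}

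The missing ingredient, which the paper takes from Kent--Leininger's proof, is a uniform finiteness statement. There is a constant $m$ such that at most $m$ conjugacy classes of convex cocompact surface subgroups of $\pi_1(M_8)$ have $\mathrm{Mod}$-conjugate images. This suffices. Suppose $A,B_1,\dots,B_N$ are pairwise non-commensurable in $\pi_1(M_8)$ but each $\Delta(B_i)$ is commensurable with $\Delta(A)$. Intersecting finitely many finite-index subgroups gives finite-index $A'\le A$ and $B_i''\le B_i$ with every $\Delta(B_i'')$ conjugate to $\Delta(A')$. Hence $N+1\le m$, and $p(g)\ge c(g)/m$.

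Step 3 has the same flaw. The lifted copy of a finite-index subgroup of $\mathrm{Mod}(S_{1,3})$ has infinite index in $\mathrm{Mod}(S_{h,0})$, so you need the analogous constant $m(h)$, not injectivity.

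There is also a smaller error in Step 1. Distinct square-free $D$ need not give distinct quaternion algebras: multiplying $D$ by a prime $p\equiv 1\pmod 3$ (a norm from $\mathbb{Q}(\sqrt{-3})$) leaves $\left(\frac{-3,D}{\mathbb{Q}}\right)$ unchanged. So this invariant only separates wide commensurability classes, and only sublinearly many of those occur for $D\le X$. What separates commensurability classes inside $\pi_1(M_8)$ is the $\pi_1(M_8)$-orbit of the stabilized circle. That orbit is detected by the discriminant of the primitive integral Hermitian form, and this is what Vulakh's count supplies.
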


In the proof of Theorem \ref{thm26}, there is a universal constant $m$ such that the natural map from the set of conjugacy classes of convex cocompact surface subgroups of $\pi_1(M_8)$ to the set of conjugacy classes of purely pseudo-Anosov subgroups of $\mathrm{Mod}(S_{1,3})$ induced by $\Delta$ is $m$-to-$1$. By passing to finite-index subgroups of $\pi_1(M_8)$ so that the image of $\Delta$ is a subgroup of $\mathrm{Mod}(S_{h,0})$, the new comparison constant $m=m(h)$ depends only on the genus $h$.

\begin{proof}[Proof of Corollary \ref{cor3}]
    Let $p(g)$ be the number of commensurability classes of purely pseudo-Anosov closed surface subgroups of genus at most $g$ in $\mathrm{Mod}(S_{h,0})$, and let $c(g)$ be the number of commensurability classes of convex cocompact closed surface subgroups of genus at most $g$ of $\pi_1(M_8)$.
    By the above argument, we have
\begin{equation}
    p(g)\geq \frac{c(g)}{m}.
\end{equation}

Since a quasi-Fuchsian surface subgroup of $\pi_1(M_8)$ is convex cocompact, these immersed closed surfaces must be geometrically finite. Thus, $c(g)$ is the number $n(M_8,g)$ of quasi-Fuchsian surface subgroups of $\pi_1(M_8)$ of genus at most $g$ up to commensurability in Theorem \ref{thm1}, and $m$ can be absorbed into the constant $C_2$. Hence Corollary \ref{cor3} follows.
\end{proof}

%% file: Coannular.tex
\section{Coannular Surfaces}\label{sec:coannular}

In this section, we prove \Cref{thm:coannular} by explicitly constructing infinitely many coannular essential surfaces of some fixed genus, up to homotopy. Let $M=\mathbb{H}^3/\Gamma$ be a cusped hyperbolic $3$-manifold.
\begin{defi}[Coannular surface]
    A $\pi_1$-injective closed surface $f\from S\to M_0$ of genus  at least $2$ is \textit{coannular to a toroidal boundary component} $T$ in a compact $3$-manifold $M_0$ with torus boundary components if there is a $\pi_1$-injective annulus $A$ in $M_0$ with a boundary component on each of $S$ and $T$. The slope of $A\cap T$ is called a \textit{coannular slope} of $S$ on $T$.
\end{defi}

Initially, we follow the strategy in \cite{clrEssentialClosedSurfaceCuspedM} by passing $M$ to a finite cover $\widehat{M}  $ with at least 3 boundary components, and find a connected, non-separating properly embedded incompressible surface $F$ in $\widehat{M}  $ which is disjoint from some torus boundary component of $\widehat{M}  $. We first recall two lemmas from \cite{clrEssentialClosedSurfaceCuspedM}.

\begin{lem}[\citenum{clrEssentialClosedSurfaceCuspedM}, Lemma 2.1]\label{lem:clrlemma2.1}
    A cusped hyperbolic $3$-manifold $M$ has a finite cover $\widehat{M}  $ with at least $3$ torus boundary components. 
\end{lem}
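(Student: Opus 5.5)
The plan is to first produce a cover with many cusps using the subgroup separability of peripheral subgroups, and then pass to a further cover in which some cusp has several preimages. Since $M$ is a cusped hyperbolic $3$-manifold, $\Gamma$ is residually finite; by work of Hamilton and Long (used in \cite{clrEssentialClosedSurfaceCuspedM}), it is even LERF on peripheral subgroups. The strategy is to find a finite-index subgroup $\Gamma'$ such that the covering $\mathbb{H}^3/\Gamma' \to M$ has the property that some cusp torus $T$ of $M$ has at least $3$ preimages. The number of preimages of $T$ equals the number of double cosets $\Gamma' \backslash \Gamma / P$, where $P = \pi_1(T) \cong \mathbb{Z}^2$. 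Equivalently, it is the number of orbits of $P$ acting on the coset space $\Gamma/\Gamma'$.

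To arrange at least $3$ such orbits, I will first use residual finiteness (Selberg/Malcev, since $\Gamma$ is a finitely generated linear group) to choose a finite-index normal subgroup $N \trianglelefteq \Gamma$ with large index. I will then examine $PN/N$ inside the finite group $G = \Gamma/N$. The number of orbits of $P$ on $G$ is $[G : PN/N]$, so the aim is $[\Gamma : PN] \geq 3$.

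The key input is that $P$ is an infinite-index subgroup of $\Gamma$, because $\Gamma$ is non-elementary while $P$ is abelian. Peripheral subgroups are separable: for any finite set of elements outside $P$, there is a finite-index subgroup containing $P$ and avoiding them. I will choose $g_1, g_2 \in \Gamma$ whose images in $\Gamma/P$ are distinct and nontrivial, arranged so that $g_1 P$, $g_2 P$ and $g_2^{-1} g_1 P$ all differ from $P$. Separability then gives a finite-index subgroup $H \supseteq P$ with $g_1, g_2, g_1^{-1} g_2 \notin H$, so $[\Gamma : H] \geq 3$. Taking $\Gamma'$ to be the normal core of $H$ gives $\Gamma' P \subseteq H$, and hence $[\Gamma : \Gamma' P] \geq 3$. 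Since $\Gamma'$ is normal, the number of cusps of $\mathbb{H}^3/\Gamma'$ lying over $T$ is exactly $[\Gamma : \Gamma' P] \geq 3$.

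Finally, $\widehat{M} = \mathbb{H}^3/\Gamma'$ is a finite cover of $M$, hence again a cusped hyperbolic $3$-manifold. Its compact core has at least $3$ torus boundary components.

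The main obstacle is justifying separability of $P$. I would cite Hamilton's theorem that abelian subgroups of finite-volume hyperbolic $3$-manifold groups are separable. Alternatively, a more elementary route uses congruence quotients of $\Gamma \subset \mathrm{PSL}(2, K)$ over a number ring, where the image of $P$ has index tending to infinity as the level grows.
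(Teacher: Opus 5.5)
Your proof is correct. The paper gives no argument for this lemma to compare with; it simply quotes it from Cooper--Long--Reid.

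The argument holds up step by step. Because $P$ has infinite index, three distinct cosets $P, g_1P, g_2P$ exist. Separability then gives a finite-index $H\supseteq P$ with $[\Gamma:H]\ge 3$. Passing to the normal core $\Gamma'$ is the step that really matters: since $\Gamma' gP=g\Gamma'P$, the double cosets $\Gamma'\backslash\Gamma/P$ become left cosets of $\Gamma'P\subseteq H$. With $H$ itself you would only be guaranteed two lifts of $T$, because the double coset $HP=H$ already accounts for a degree-one lift and the rest of $\Gamma$ could form a single double coset.

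Two comments on the inputs. First, you do not need Hamilton's theorem, which appeared after Cooper--Long--Reid and so is not what they relied on. A direct matrix computation shows that an element of $\mathrm{PSL}(2,\mathbb{C})$ commuting with a parabolic is itself a parabolic with the same fixed point, or the identity. Hence $P=C_\Gamma(p)$ for any $1\neq p\in P$. Centralizers are separable in any residually finite group: if $[h,p]\neq 1$, take a finite quotient in which $[h,p]$ survives and pull back the centralizer of the image of $p$. So Malcev's residual finiteness alone suffices, and your opening appeal to it actually carries the whole argument.

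Second, the congruence-quotient alternative is only asserted. To make it rigorous, choose $g_1,g_2$ so that $\xi, g_1\xi, g_2\xi$ are distinct points of $\mathbb{P}^1(K)$, where $\xi\in\mathbb{P}^1(K)$ is the fixed point of $P$. Then reduce modulo a prime avoiding all relevant denominators. The reduced points stay distinct, and the image of $P$ fixes $\bar\xi$, so that image has index at least $3$ in the image of $\Gamma$. This is all you need.
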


\begin{lemma}[\citenum{clrEssentialClosedSurfaceCuspedM}, Lemma 2.2]\label{lem:clrlemma2.2}
    Suppose $\widehat{M}  $ is a cusped orientable $3$-manifold with at least $3$ torus boundary components. Then there is a connected, non-separating, properly embedded, incompressible surface $F$ in $\widehat{M}  $ which is disjoint from
    at least one torus boundary component. Furthermore, $[ \partial F] \neq 0$ in $ H_1 ( \partial \widehat{M}   )$
    and meets the boundary efficiently. 
\end{lemma}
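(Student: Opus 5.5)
The plan is to prove Lemma~\ref{lem:clrlemma2.2} by taking a homology class of $\widehat{M}$ that vanishes on one cusp and is nonzero on another, and then realizing it by an incompressible surface. Label three of the torus boundary components of $\widehat{M}$ as $T_1,T_2,T_3$ and write $T_4,\dots,T_r$ for the rest. Consider the boundary map $\partial\colon H_2(\widehat{M},\partial\widehat{M};\Z)\to H_1(\partial\widehat{M};\Z)=\bigoplus_j H_1(T_j)$.

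Half lives, half dies says that the image of $\partial$ is a rank-$r$ (half-dimensional) subgroup $L$ of $H_1(\partial \widehat{M})\cong \Z^{2r}$. The kernel of $H_1(\partial\widehat{M})\to H_1(\widehat{M})$ equals $L$, by exactness of the long exact sequence of the pair. I want a class $z\in H_2(\widehat{M},\partial\widehat{M})$ whose boundary has zero component in $H_1(T_1)$ but nonzero total boundary. The classes with zero $T_1$-component form the kernel of the projection $L\to H_1(T_1)\cong\Z^2$. This kernel has rank at least $r-2\ge 1$ because $r\ge 3$. So there is a nonzero $\partial z\in L$ with no $T_1$-component, and $[\partial z]\ne 0$ in $H_1(\partial\widehat M)$.

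Next I realize $z$ by a surface. Since $\widehat{M}$ is orientable, $z$ is Poincaré dual to a class in $H^1(\widehat{M})$, i.e.\ a map $\widehat{M}\to S^1$. Its restriction to $T_1$ is null-homotopic, since the boundary component there is zero. So I can homotope the map to be constant on $T_1$ and take the preimage of a regular value away from that constant. This gives a properly embedded oriented surface $F_0$ representing $z$ and disjoint from $T_1$.

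I then make $F_0$ incompressible and efficient by standard moves, each of which keeps $F_0$ disjoint from $T_1$ and keeps it representing $z$:
\begin{itemize}
\item compress along compressing disks;
\item discard sphere components, and disk components, which are inessential by irreducibility of the cusped hyperbolic manifold and since $\partial$-parallel disks bound in the torus;
\item boundary-compress and isotope away inessential boundary curves and arcs, so that $F$ meets each $T_j$ in parallel essential curves and meets the boundary efficiently;
\item minimize a complexity such as $(-\chi, \#\partial)$.
\end{itemize}
The class is preserved throughout, so $[\partial F]=\partial z\neq 0$.

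For connectedness and non-separation, write $F=\bigcup F_i$ as a union of components with $\sum[F_i]=z$. Some component $F_i$ must have $[\partial F_i]\ne0$, and every component still misses $T_1$. I take $F$ to be that component. A component with nonzero boundary class is nonzero in $H_2(\widehat M,\partial\widehat M)$. A connected oriented properly embedded surface whose relative class is nonzero is non-separating: if it separated, it would bound a region and hence be null-homologous rel boundary. The component is incompressible since it is a component of an incompressible surface.

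The main obstacle I expect is bookkeeping in the tightening step. Boundary compressions and annulus removals must not reintroduce intersections with $T_1$, and must not kill the boundary class. Boundary compressions only change curves on tori that $F$ already meets, and the homology class is invariant, so I expect this to go through.
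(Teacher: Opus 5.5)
Your proposal is correct and follows essentially the same route as the Cooper--Long--Reid argument that the paper cites without reproducing: half lives, half dies shows that the boundary classes with zero $T_1$-component have rank at least $r-2\ge 1$, you realize such a class by an incompressible surface missing $T_1$, and a component with nonzero boundary class is then non-separating. One point to make explicit: coherent orientation of the curves of $\partial F$ on each torus (part of meeting the boundary ``efficiently'') does not come from boundary compressions, but from attaching the annulus of $T_j$ between two adjacent, oppositely oriented boundary curves; this move preserves $\chi$ and the relative class while lowering $\#\partial$ by two, so it is your minimization of $(-\chi,\#\partial)$ that actually delivers efficiency.
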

Since $M$ is aspherical, atoroidal, and acylindrical, a \poi surface $F\hookrightarrow M$ with boundary (which is not an annulus) must have Euler characteristic at most $-1$. 
Cooper--Long--Reid~[\citenum{clrEssentialClosedSurfaceCuspedM}, Theorem 2.3] first constructs a closed coannular surface $S'$ in an infinite cyclic cover $M_F$, by tubing the boundary components of two copies of $F$ in $M_F$. All tubes have the \emph{same} height (i.e., their projections in $\widehat{M} $ wrap the boundary tori the same number of times). They then construct an immersed essential surface $S$ in $M$ by maximally compressing $S'$. There may exist a compressing disk annihilating the desired annuli; see \Cref{fig:compressing}.
    \begin{figure}[htbp]
        \centering
        \includegraphics[width=0.35\linewidth]{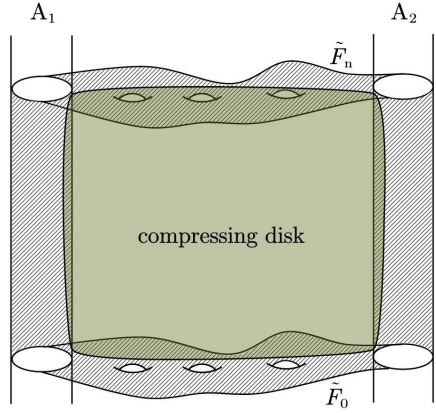}
        \caption{A possible compressing disk in $M_F$.}
        \label{fig:compressing}
    \end{figure}

Instead, we directly construct an immersed essential coannular closed surface in a finite cover by tubing 
with \emph{possibly distinct} heights and then applying Maskit's Kleinian combination theorems. The original paper of Maskit is \cite{maskit1993klein}, and some applications are presented in \cite[Chapter 1]{krushkal_1986kleinian}. Before stating and proving the main result, we recall Maskit's two Kleinian combination theorems.

\begin{defi}[Precisely invariant under a subgroup and doubly cusped]
    For a subgroup $J$ of a Kleinian group $G$, a topological disk $B$ in the Riemann sphere $\hat{\mathbb{C}}$ is called \emph{precisely invariant under $J$} if $g(B)=B$ for all $g \in J$, and $g(B)\cap B =\varnothing$ for all $g\in G\setminus J$. 
    
    The fixed point of a rank-one parabolic subgroup $J$ of $G$ is called \emph{doubly cusped} if there are two disjoint circular disks $B_1, B_2\in \hat{\C}$ such that $B_1 \cup B_2 $ is precisely invariant under $J$ in $G$.
\end{defi}

\begin{theorem}[Maskit's first Kleinian combination theorem, \cite{maskit1993klein}]\label{1st KCT}
    Suppose $G_1$ and $G_2$ are two geometrically finite Kleinian groups with a common subgroup $J$, where the index of $J$ is at least 2 in both groups. If $J$ is an infinite cyclic group generated by a parabolic element, and there is a Jordan curve that bounds two open disks $B_1$ and $B_2$ in $\hat{\mathbb{C}}$ such that $B_i$ is precisely invariant under $J$ in $G_i$ for $i=1,2$, then $G=\langle G_1,G_2\rangle\cong G_1\ast_{J}G_2$ is a geometrically finite Kleinian group.
\end{theorem}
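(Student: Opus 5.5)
Maskit's first combination theorem is a classical result that the paper cites rather than proves. The plan below is the standard ping-pong plus fundamental-domain argument, in three steps.

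\textbf{Setup.} Let $W$ be the Jordan curve, so that $\hat{\mathbb{C}}=B_1\sqcup W\sqcup B_2$. Let $p$ be the fixed point of the parabolic generator of $J$; since $J$ preserves $B_1$ and $B_2$, it preserves $W$, so $p\in W$. For $g\in G_1\setminus J$, precise invariance gives $g(B_1)\cap B_1=\varnothing$, hence $g(B_1)\subset \overline{B_2}$. Symmetrically, for $g\in G_2\setminus J$ we get $g(B_2)\subset\overline{B_1}$.

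\textbf{Step 1: $G\cong G_1\ast_J G_2$.} Take any element written in a reduced alternating normal form $w=a_nb_n\cdots a_1b_1$, with $a_i\in G_1\setminus J$ and $b_i\in G_2\setminus J$. Using the two inclusions above, I would show by induction on word length that $w$ maps a suitable open subset of $B_2$ into $B_1$ (or the reverse, depending on where the word begins and ends). Hence no reduced word is trivial. To make the inclusions strict, use open sets away from the boundary: take a point of $B_2$ that is not in $\bigcup_{g\in G_2} g(W)$, for instance an interior point of a fundamental set of $J$ acting on $B_2$. This is the ping-pong lemma for amalgams, and it gives injectivity of $G_1\ast_J G_2\to G$.

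\textbf{Step 2: discreteness.} Choose fundamental domains $D_i$ for $G_i$ acting on their ordinary sets $\Omega(G_i)$. Arrange that $D_i\cap B_i$ is a fundamental set for the action of $J$ on $B_i$. Since $J$ is parabolic cyclic, this can be taken to be a strip bounded by two arcs meeting at $p$. Set
\[
D=(D_1\cap B_2)\cup(D_2\cap B_1)\cup(\text{the common strip along } W).
\]
Using the normal form and the ping-pong inclusions, I would check two things. First, the translates $g(D)$, $g\in G$, are pairwise disjoint in their interiors. Second, $D$ has nonempty interior. Together these show that $G$ acts discontinuously somewhere, so $G$ is Kleinian. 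Repeating the construction in $\mathbb{H}^3$, with fundamental polyhedra $P_i$ of $G_i$ cut along a $J$-invariant surface spanning $W$, gives a fundamental polyhedron $P$ for $G$ whose faces are faces of $P_1$ or $P_2$.

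\textbf{Step 3: geometric finiteness (main obstacle).} Each $P_i$ can be taken finite-sided, because $G_i$ is geometrically finite. After cutting, only finitely many faces of $P_1$ and $P_2$ survive into $P$, together with the two faces paired by $J$. The delicate point is near the fixed point $p$. There one must verify that $P$ still meets the limit set only in the way a geometrically finite domain does near a cusp. Concretely, I need $p$ to be a (doubly) cusped parabolic fixed point of $G$, with a horoball neighborhood disjoint from all translates $g(P)$ for $g\notin J$.

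My first attempt would use precise invariance of both $B_1$ in $G_1$ and $B_2$ in $G_2$. Together these force $B_1\cup B_2$ to be precisely invariant under $J$ in $G$, which makes $p$ doubly cusped. The local picture at $p$ is then a standard cusp. Finally, I would apply the finite-sided-polyhedron characterization of geometric finiteness to conclude that $G$ is geometrically finite.
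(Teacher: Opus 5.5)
The paper gives no proof of this statement. It is quoted from Maskit and used only in the coannular construction, with $G_1=H$ and $G_2=\beta_0^nH\beta_0^{-n}$. Your outline is the classical one, but Step~1 has a genuine gap, and with the hypotheses exactly as written it cannot be filled.

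The inclusions $a(B_1)\subset B_2$ and $b(B_2)\subset B_1$, for $a\in G_1\setminus J$ and $b\in G_2\setminus J$, only show that reduced words of \emph{odd} length are nontrivial. A word $a_nb_n\cdots a_1b_1$ maps $B_2$ into $B_2$, and nothing is strict. What you need is a point of $B_2$ lying outside $\bigcup_{a\in G_1\setminus J}a(B_1)$ (or the symmetric statement). Choosing a point off $\bigcup_{g\in G_2}g(W)$ does not address this. The same missing input is hidden in your assertion in Step~2 that $D$ has nonempty interior.

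For open discs and index $2$ the conclusion really fails. Let $t(z)=z+1$, $s(z)=-z$, $s'(z)=-z+\tfrac12$, $G_1=\langle t,s\rangle$, $G_2=\langle t,s'\rangle$, $J=\langle t\rangle$, $W=\hat{\mathbb{R}}$, and let $B_1,B_2$ be the upper and lower half-planes. Every hypothesis holds. However $s'ss's=t$ in $G$, so the reduced word $s'\,s\,s'\,(st^{-1})$ is trivial. Indeed $G=\{z\mapsto \pm z+k/2\}$ is virtually cyclic, while $\langle t,s's\rangle\cong\mathbb{Z}^2$ in $G_1\ast_JG_2$. In this example $s(B_1)=B_2$ and $s'(B_2)=B_1$, and your $D$ has empty interior.

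So some extra hypothesis is needed. Two repairs:
\begin{itemize}
\item For injectivity, $[G_i:J]\ge 3$ for some $i$ suffices. Conjugating an even-length word by a suitable element of $G_i\setminus J$ turns it into an odd-length reduced word. Alternatively, two or more disjoint open translates $a(B_1)$ cannot fill the connected disc $B_2$, so a base point exists.
\item The usual formulation asks that the \emph{closed} discs minus $\Lambda(J)$ be precisely invariant. Then you can play ping-pong with a point of $W$ outside the countable set $G_1p\cup G_2p$: every reduced word pushes it into one of the open discs.
\end{itemize}
In the paper's application $J$ has infinite index and the limit sets lie in strips strictly separated by the line $l$, so both repairs apply there.

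Step~3 also rests on a false claim. $B_1\cup B_2=\hat{\mathbb{C}}\setminus W$ is never precisely invariant under $J$ in $G$, because $a(B_1)\subset B_2$ for every $a\in G_1\setminus J$. What you should prove instead is the following. Choose $J$-invariant discs $E_1\subset B_1$ and $E_2\subset B_2$ at $p$ such that $E_1\cup B_2$ is precisely invariant under $J$ in $G_2$ and $B_1\cup E_2$ is precisely invariant under $J$ in $G_1$. Then $E_1\cup E_2$ is precisely invariant under $J$ in $G$. The normal-form induction gives $g(E_1\cup E_2)\subset(B_1\setminus E_1)\cup(B_2\setminus E_2)$ for $g\in G\setminus J$.

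Getting such $E_i$ from the double-cusp structure of $p$ in $G_i$ requires at least $\mathrm{Stab}_{G_i}(p)=J$. This is automatic for torsion-free $G_i$, as in the paper. In general $p$ need not be doubly cusped in $G$ at all. If each $G_i$ contains an order-two rotation about $p$, their product is a parabolic fixing $p$ transverse to $J$, and $p$ becomes a rank-two cusp. For example, take $G_1=\langle \mathrm{PSL}(2,\mathbb{Z}), z\mapsto -z\rangle$, $B_1=\{\mathrm{Im}\,z>3/2\}$, and $G_2$ the conjugate of $G_1$ by $z\mapsto z+3i$; then $z\mapsto z+6i$ lies in $G$.

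Finally, cutting along a surface that spans a non-round Jordan curve does not give a convex polyhedron. So rather than a finite-sided polyhedron criterion, you should verify the Beardon--Maskit characterization: every limit point of $G$ is a point of approximation or a cusped parabolic fixed point.
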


\begin{theorem}[Maskit's second Kleinian combination theorem, \cite{maskit1993klein}]\label{2nd KCT}
    Suppose $Q$ is a geometrically finite Kleinian group with two subgroups $J_1$ and $J_2$. Let $B_1$ and $B_2$ be two open disks in $\hat{\mathbb{C}}$ with $\hat{\mathbb{C}}\setminus(\overline{B}_1\cup \overline{B}_2) \ne \varnothing$ such that $g(B_1)\cap B_2=\varnothing$ for all $g\in Q$, and $B_i$ is precisely invariant under $J_i$ in $Q$, $i=1,2$. If $f\in \PSL(2,\mathbb{C})\setminus Q$ maps $\hat{\mathbb{C}}\setminus B_1$ onto the interior of $B_2$, satisfying $J_2=fJ_1f^{-1}$ and $f(\partial B_1)=\partial B_2$, then $G=\langle Q,f \rangle \cong 
    Q\ast_{f}$ is the HNN-extension of $Q$ by $f$ and a geometrically finite Kleinian group.
\end{theorem}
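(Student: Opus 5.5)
The plan is the classical ping-pong argument in the style of Maskit. First I fix notation. Put $\mathcal{B}_i=\bigcup_{g\in Q} g(B_i)$. Precise invariance of $B_i$ under $J_i$ implies that the translates $g(B_i)$ are indexed by the cosets $Q/J_i$ and are pairwise disjoint. The hypothesis $g(B_1)\cap B_2=\varnothing$ implies $\mathcal{B}_1\cap\mathcal{B}_2=\varnothing$. Next I choose a fundamental set $D_Q$ for $Q$ acting on its domain of discontinuity, adapted to the disks. This means $D_Q\cap B_i$ is a fundamental set for the action of $J_i$ on $B_i$, and $D_Q$ meets no other translate of $B_i$. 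The candidate fundamental set for $G$ is then
\[
D=D_Q\setminus(\overline{B}_1\cup\overline{B}_2),
\]
which is nonempty and open because $\hat{\mathbb{C}}\setminus(\overline{B}_1\cup\overline{B}_2)\neq\varnothing$. The side pairing is $f\colon\partial B_1\to\partial B_2$, and it is compatible with $J_2=fJ_1f^{-1}$.

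Second, I would prove the HNN structure and discreteness simultaneously by ping-pong. Consider a word in normal form
\[
w=q_0f^{\varepsilon_1}q_1\cdots f^{\varepsilon_n}q_n,\qquad n\ge1,
\]
with the usual reducedness condition: if $\varepsilon_i=1$ and $\varepsilon_{i+1}=-1$ then $q_i\notin J_1$, and if $\varepsilon_i=-1$ and $\varepsilon_{i+1}=1$ then $q_i\notin J_2$. I will show by induction on $n$ that $w$ maps the interior of $D$ into $\mathcal{B}_1\cup\mathcal{B}_2$, and more precisely into the translate $q_0(B_2)$ when $\varepsilon_1=1$ and into $q_0(B_1)$ when $\varepsilon_1=-1$. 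The induction uses three facts: $f$ sends $\hat{\mathbb{C}}\setminus B_1$ into $B_2$; $f^{-1}$ sends $\hat{\mathbb{C}}\setminus \overline{B}_2$ into $B_1$; and an element of $Q\setminus J_i$ moves $B_i$ off itself and therefore into the complement of $B_i$. Since $D\cap(\mathcal{B}_1\cup\mathcal{B}_2)=\varnothing$, every such $w$ is nontrivial, so $G\cong Q\ast_f$. The same argument shows that no nontrivial element of $G$ maps a point of the interior of $D$ into $D$. Together with discreteness of $Q$, this gives a nonempty open set on which $G$ acts freely with pairwise disjoint translates, so $G$ is discontinuous there and hence Kleinian. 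Finally, the $G$-translates of $\overline{D}$ tile the complement of a limit set. That limit set is the union of $\Lambda(Q)$, its $G$-translates, and the accumulation points of nested sequences of disks $w(B_i)$.

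Third, geometric finiteness, which I expect to be the main obstacle. Since $Q$ is geometrically finite, I would take a finite-sided fundamental polyhedron $P_Q$ for $Q$ in $\mathbb{H}^3$, adapted so that it is cut by the hyperbolic half-spaces $H_i$ spanned by the circles $\partial B_i$. I then set $P=P_Q\setminus(H_1\cup H_2)$. By the ping-pong step, $P$ is a fundamental polyhedron for $G$, and it has finitely many sides: those of $P_Q$ together with the two hemispheres, which are paired by $f$. The delicate points are the ideal points of $P$ lying on $\partial B_1\cup\partial B_2$. At such a point one must check that it is either in the domain of discontinuity or a cusp whose stabilizer in $G$ has full rank, so that it is a bounded parabolic point. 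If $J_i$ contains parabolics with fixed points on $\partial B_i$, the doubly cusped and precise-invariance hypotheses control how the $Q$-cusp neighborhoods meet $H_i$. I would check that $f$ conjugates the relevant stabilizers, so that a cusp of $Q$ on $\partial B_1$ is identified with a cusp on $\partial B_2$ and no new rank-deficient cusp appears. With finitely many sides and every ideal vertex accounted for, the Beardon--Maskit criterion then gives geometric finiteness of $G$.
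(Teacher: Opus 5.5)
The paper does not prove this statement; it quotes it from Maskit. So your sketch has to be measured against Maskit's classical argument, and it has a genuine gap in the geometric finiteness step.

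\textbf{Steps 1--2.} These are the standard argument and are essentially correct. Tracking the open set $\mathrm{int}\,D$ through a reduced word gives $w(\mathrm{int}\,D)\subset q_0(B_1)\cup q_0(B_2)$, which removes the boundary issues. This yields injectivity of $Q\ast_f\to G$ and pairwise disjointness of the $G$-translates of $\mathrm{int}\,D$, hence discreteness. Two things need fixing.
\begin{itemize}
\item You must justify $\mathrm{int}\,D\neq\varnothing$. The hypothesis $\hat{\mathbb C}\setminus(\overline B_1\cup\overline B_2)\neq\varnothing$ only gives a point outside $\overline B_1\cup\overline B_2$. But $D$ needs an open piece of $\Omega(Q)$ that avoids every $Q$-translate of $\overline B_1\cup\overline B_2$.
\item The last sentence of Step~2 (that the translates of $\overline D$ tile $\Omega(G)$, together with your description of $\Lambda(G)$) is asserted, not proved. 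Ping-pong does not give it.
\end{itemize}

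\textbf{The gap: Step~3.} There are two problems.

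First, you cut $P_Q$ by ``the half-spaces $H_i$ spanned by the circles $\partial B_i$''. In the statement the $B_i$ are only Jordan domains. This is how Maskit states it, and how the paper uses it in the coannular construction: there $B_1=U_1^1$ is a lift of a punctured-disk neighbourhood and $B_2=f_{n_k}(\hat{\mathbb C}\setminus\overline{U_1^1})$. For non-round $B_i$ there are no $J_i$-invariant half-spaces with $f(\mathbb H^3\setminus\overline{H_1})=H_2$. Nor can you simply shrink to round disks. Shrinking $B_1$ enlarges $B_2=f(\hat{\mathbb C}\setminus\overline{B_1})$, which can destroy precise invariance or the condition $g(B_1)\cap B_2=\varnothing$.

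Second, and more seriously, even for round disks the claim ``by the ping-pong step, $P$ is a fundamental polyhedron for $G$'' does not follow. Ping-pong shows the translates of $\mathrm{int}\,P$ are disjoint, not that they cover $\mathbb H^3$. A point missed by all translates lies in an infinite nested chain $w_1(\overline{H_{i_1}})\supset w_2(\overline{H_{i_2}})\supset\cdots$. Ruling this out, i.e.\ showing such chains shrink to single points of $\hat{\mathbb C}$, is the real content of the theorem beyond the algebra.

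To close the gap you can go one of two ways.
\begin{itemize}
\item Apply Poincar\'e's polyhedron theorem. This needs a finite-sided $P_Q$ chosen so that $f(P_Q\cap\partial H_1)=P_Q\cap\partial H_2$. You must then check the edge cycles along $\partial H_i\cap(\text{sides of }P_Q)$ and completeness at the ideal vertices on $\partial B_1\cup\partial B_2$.
\item Argue on $\hat{\mathbb C}$ and then invoke the Beardon--Maskit criterion. You would show three things:
\begin{itemize}
\item every limit point of $G$ is either a $G$-translate of a limit point of $Q$ or the intersection of a nested chain $w_k(\overline{B_{i_k}})$;
\item each such intersection is a single point that is conical or a bounded parabolic fixed point of $G$;
\item parabolic fixed points of $Q$ stay bounded parabolic in $G$.
\end{itemize}
The parabolic case genuinely occurs. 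For example, if $f$ is parabolic and shares its fixed point with $J_1=J_2$, a rank-one cusp of $Q$ becomes a rank-two cusp of $G$.
\end{itemize}
As written, Step~3 only lists what would have to be checked.
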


Now we present our construction of a coannular closed surface. Although some part of the proof that involves Maskit's second Kleinian combination theorem can be viewed as a corollary of 
\cite[Theorem 8.8]{Baker-Cooper08},  we directly prove the desired result for completenes. For a \poi closed surface $f\from S\to M$, it is called \emph{disjoint from a torus boundary component $T$ of $M$} if $f_*(\pi_1(S))\cap \pi_1(T)$ is trivial.

\begin{theorem}\label{thm6.5}
Let $\widehat{M}  $ be a cusped hyperbolic $3$-manifold with at least 3 torus boundary components. Then $\widehat{M}  $ contains an immersed essential coannular closed surface of genus $g\geq 2$ that is disjoint from at least one torus boundary component of $\widehat{M}$.

\end{theorem}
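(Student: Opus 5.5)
Following Lemma \ref{lem:clrlemma2.2}, I take a connected, non-separating, properly embedded incompressible surface $F\subset\widehat{M}$ that misses a boundary torus $T_0$, with $[\partial F]\neq 0$ in $H_1(\partial\widehat{M})$. Since $F$ is non-separating, $\widehat{M}$ has an infinite cyclic cover $M_F$ dual to $F$, in which $F$ lifts to disjoint copies $F_j$ ($j\in\Z$). Each boundary torus $T$ meeting $\partial F$ lifts to annuli or cylinders in $M_F$, and the lifts of $F_0$ and $F_1$ have boundary circles on these. Cooper--Long--Reid tube corresponding boundary components of $F_0$ and $F_1$ by annuli of equal height. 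I will instead order the boundary components $\partial_1,\dots,\partial_r$ of $F$ (here $r\ge 1$, and $r\ge 2$ or the ends lie on distinct tori, after passing to a double if needed) and join $\partial_i F_0$ to $\partial_i F_{h_i}$ or its appropriate translate by an annulus. The heights $h_i$ are chosen inductively. The resulting closed surface $S'$ has genus at least $2$, because $\chi(F)\le -1$ and $\chi(S')=2\chi(F)$. It is coannular, since each tube is a peripheral annulus whose core is a parabolic element. It is disjoint from $T_0$, because $F$ never meets $T_0$. The map $S'\to M_F\to\widehat{M}$ is then my candidate immersion.

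The heart of the argument is $\pi_1$-injectivity, which I prove group-theoretically via Maskit's combination theorems. Let $G_0=\pi_1(F)$, viewed as a geometrically finite Kleinian group in $\pi_1(\widehat{M})$. I expect it to be quasi-Fuchsian with accidental-parabolic-free cusps exactly at its boundary curves, since $F$ is incompressible and not a virtual fiber after choosing appropriately. I conjugate a copy $G_0'$ by an element $t^{h_1}$ of the cusp group of the first boundary torus, where $t$ is transverse to the slope of $\partial_1 F$. Gluing along the rank-one parabolic subgroup $J_1=\langle \partial_1\rangle$ is then an amalgamated product $G_0 *_{J_1} G_0'$ by Theorem \ref{1st KCT}. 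Its hypothesis holds when $h_1$ is large: the limit set of $t^{h_1}G_0't^{-h_1}$ is pushed into a small horodisk region at the cusp point, so a Jordan curve through the parabolic fixed point (the boundary of a doubly cusped strip) separates the two limit sets precisely invariantly. Each subsequent boundary pair $\partial_i$ ($i\ge 2$) identifies two parabolic subgroups already in the current group $Q_{i-1}$. I realize this by an HNN-extension with stable letter $f_i = c_i t_i^{h_i}$, a product of a conjugating element and a large power of a transverse parabolic in the $i$-th cusp group, and apply Theorem \ref{2nd KCT}. The inductive choice of $h_i$, made after fixing $Q_{i-1}$ (and hence its limit set and its finitely many cusp orbits), guarantees that $f_i$ maps the complement of a precisely invariant disk $B_1$ at one cusp into a precisely invariant disk $B_2$ at the other. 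The final group $\pi_1(S')$ is the graph of groups for the decomposition of $S'$ along the tube cores, and by the theorems it maps isomorphically onto a geometrically finite Kleinian subgroup. This proves injectivity, and it also shows the tube cores are nontrivial parabolics, so the surface is coannular rather than compressible to a quasi-Fuchsian one.

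The main obstacle is verifying the Maskit hypotheses, namely the existence of the precisely invariant disks with $g(B_1)\cap B_2=\varnothing$ for all $g\in Q_{i-1}$. This requires each cusp point of $\partial_i$ to be doubly cusped for $Q_{i-1}$, and distinct cusp orbits to be separated. I plan to obtain doubly-cusped-ness from geometric finiteness: since $F$ is embedded and meets $\partial\widehat{M}$ efficiently, the limit set near a boundary parabolic fixed point lies in a strip, so the two complementary horodisk half-planes work. I then need to check that this persists after each combination, using that the new limit set is a union of translates of old limit sets shrunk into disks. If orbits of different boundary components on the same torus interfere, I would first pass to a finite cover so that distinct boundary components of $F$ lie on distinct tori, or use the nonzero homology class $[\partial F]$ to choose consistent orientations so that the tubes are genuinely non-separating.
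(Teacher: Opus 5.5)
Your overall strategy is the paper's, but two steps need repair. The shared ingredients are these:
\begin{itemize}
\item geometric finiteness of $\pi_1(F)$, from tameness and the covering theorem since $F$ misses a torus (you need only this, not that $\pi_1(F)$ is quasi-Fuchsian);
\item Maskit's first theorem for the first tube, after conjugating the second copy by a large power of a transverse parabolic so that the two limit-set strips are separated by a horizontal line;
\item HNN extensions with stable letters $c_it_i^{h_i}$ (the paper's $\beta_0^{n_0}\beta_k^{-n}$) via the second theorem.
\end{itemize}

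\textbf{The surface does not live in $M_F$.} It cannot be built in the infinite cyclic cover $M_F$ with independently chosen heights. Once the second lift $F_h$ is fixed, a tube from $\partial_i F_0$ to $\partial_i F_h$ lies in a lifted cylinder, so its wrapping number on $T_i$ is determined by $h$. Equivalently, the stable letters $c_it_i^{h_i}$ generally have nonzero value on the class dual to $F$, so the surface group is not contained in $\pi_1(M_F)$. Map $F\cup F\cup\bigcup_i A_i$ directly into $\widehat{M}$, as the paper does. No case split on the number of boundary components and no double is needed: with one boundary component you simply stop after the amalgamation.

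\textbf{The main obstacle is left unproved and misdiagnosed.} In the $i$-th HNN step, $J_1=\langle[\gamma_{i,1}]\rangle$ and $J_2=\langle[\gamma_{i,2}]\rangle$ come from the \emph{same} boundary curve of $F$ in the two copies, so they are conjugate in $\pi_1(\widehat{M})$. Different boundary components sharing a torus are therefore irrelevant. Passing to a cover, or choosing orientations via $[\partial F]$, does not produce disks with $g(B_1)\cap B_2=\varnothing$ for all $g\in Q_{i-1}$.

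What you need is that the fixed points $p_1,p_2$ lie in different $Q_{i-1}$-orbits, and this follows from the inductive hypothesis:
\begin{itemize}
\item $Q_{i-1}\cong\pi_1(S_{i-1})$ is free;
\item $\gamma_{i,1}$ and $\gamma_{i,2}$ are distinct boundary components of $S_{i-1}$, hence not conjugate even up to inversion;
\item each $J_j$ is the full stabilizer of $p_j$ in $Q_{i-1}$, because boundary elements generate maximal cyclic subgroups.
\end{itemize}

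Next, $Q_{i-1}$ is geometrically finite; this is part of the conclusion of Maskit's theorems, so no limit-set bookkeeping is required. Hence each $p_j$ is doubly cusped and, by Ahlfors finiteness, gives two punctures of the finite-type surface $\Omega(Q_{i-1})/Q_{i-1}$. The four punctures are distinct, so you can choose disjoint neighborhoods and lift them to precisely invariant disks $U_j^1,U_j^2$ at $p_j$.

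After shrinking and relabeling, set $B_1=U_1^1$ and $B_2=c_it_i^{h_i}(\hat{\mathbb{C}}\setminus\overline{U_1^1})$. For $h_i$ large, $B_2\subset c_i(U_1^2)\subset U_2^2$. Then $g(B_1)\cap B_2=\varnothing$ holds automatically, because the two sets project into disjoint neighborhoods in the quotient. With these two repairs your outline becomes the paper's proof.
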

\begin{proof}
    By \Cref{lem:clrlemma2.2}, $\widehat{M}  $ has a properly embedded  $2$-sided non-separating incompressible surface $F$ that meets the boundary of $\widehat{M}  $ efficiently and is disjoint from some torus boundary component of $\widehat{M}  $, with $[ \partial F ] \neq 0$ in $ H_1 ( \partial \widehat{M}   )$. Suppose $F$ has $m\geq1$ boundary components, all of which are contained in $\partial\widehat{M}  $.

    Suppose the boundary curves of $F$ are $\gamma_0,\gamma_1,\dots,\gamma_{m-1}$, where $\gamma_j$ is contained in a boundary torus $T_j$ of $\widehat{M}  $. Here the $T_j$ are not necessarily distinct. We fix a basepoint $x\in\gamma_0$ for all fundamental groups that appear later. Let $\langle \alpha_j,\beta_j \rangle<\PSL(2,\C)$ be the rank-2 parabolic subgroup of $\pi_1(\widehat{M}  )$ associated to $T_j$, in which we assume that $\langle\beta_j\rangle$ does not contain the element represented by $\gamma_j$, for all $j$. Let $H$ be the image of $\pi_1(F,x)$ under the inclusion $\pi_1(F,x)\hookrightarrow \pi_1(\widehat{M}  ,x)\hookrightarrow \PSL(2,\mathbb{C})$. Then $H$ is finitely generated since $F$ is a surface of finite type. By the tameness theorem~\cite{agol2004tameness,cgTameness}, $H$ is tame. Since $F$ is disjoint from some boundary component of $\widehat{M}  $, it is not a fiber of $\widehat{M}  $. By the covering theorem~\cite{canary1996covering}, $H$ is geometrically finite. If $\gamma\in \pi_1(F,x)$, we denote by $[\gamma]$ its image in $\psltc$. 
    
    Our next plan is to take two copies $F_1,F_2$ of the above surface $F$, where $F_i$ has boundary components the $\gamma_{i,j}$, \emph{inductively} glue each pair of $\gamma_{1,j}$ and $\gamma_{2,j}$ with an annulus in $T_j$ (where the height of the annulus gluing $\gamma_{1,j+1}$ and $\gamma_{2,j+1}$ depends on the surface obtained from previous steps), and show the closed surface obtained is incompressible. See Figure \ref{fig:construction} for the case of two boundary components.
    \begin{figure}[htbp]
        \centering
        \includegraphics[width=0.6\linewidth]{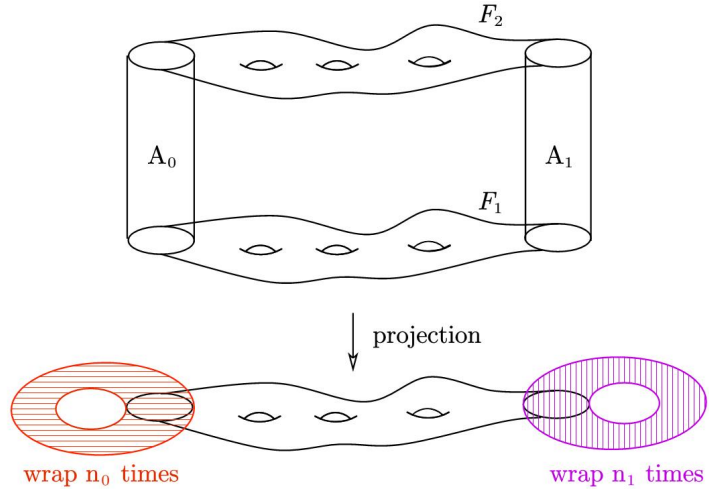}
        \caption{The gluing construction for the case of $m=2$, where $F_1$ and $F_2$ coincide in $M$.}
        \label{fig:construction}
    \end{figure}

    We first glue $F_1,F_2$ together by adding an annulus that connects $\gamma_{1,0}$ and $\gamma_{2,0}$. For $n\in\Z$, let $A_0^n$ be the annulus in $T_0$ with boundary curves $\gamma_{1,0}$ and $\gamma_{2,0}$, that wraps the torus $T_0$ $n$ times in the direction of $\beta_0$; here the basepoint $x$ lies in $\gamma_{1,0}$. We glue $A^n_0$ and $F_2$ along $\gamma_{2,0}$ and obtain a surface $F_2^{0,n}$. We also have
    \begin{equation}
        \pi_1(F_2^{0,n})=\beta_0^n\cdot \pi_1(F_2)\cdot \beta_0^{-n}=\beta_0^n\cdot \pi_1(F_1)\cdot \beta_0^{-n}\cong \beta_0^n H \beta_0^{-n}.
    \end{equation}
    Now we glue $F_1$ and $F_2^{0,n}$ along $\gamma_{1,0}$, and we show that the obtained surface is incompressible. Note that $[\gamma_{1,0}]$ commutes with $\beta_0$ since $[\gamma_{1,0}]\in\langle \alpha_0,\beta_0 \rangle$. 
    \begin{lem}
    	For $n$ sufficiently large, 
    	\begin{equation}
    		H\ast_{\gamma_{1,0}}\bigl(\beta_0^n H \beta_0^{-n}\bigr)     \cong \langle H,\beta_0^n H \beta_0^{-n}\rangle .
    	\end{equation}
    	Hence, $S_0^n$ is incompressible. 
    \end{lem}
	\begin{proof}
		For convenience, we denote $H$ and $\beta_0^n H \beta_0^{-n}$ by $G_1$ and $G_2$ respectively. Let $J$ be the common subgroup of $G_1$ and $G_2$ that is generated by the curve $\gamma_{1,0}$, i.e., $J=\langle[\gamma_{1,0}]\rangle \cong \mathbb{Z}$. Then
		$J$ is a rank-1 parabolic group, corresponding to the boundary curves of $F_1$ and $F_2^{0,n}$ in $T_0$.
		
		Consider the compactified complex plane $\hat{\C}=\C\cup\{\infty\}$ as the boundary of the upper half-space model of $\mathbb{H}^3$. 
		Up to conjugation, we assume that $\infty$ is the parabolic fixed point of $\langle \alpha_0,\beta_0 \rangle$, and the action of $[\gamma_{1,0}]$ on $\C$ is a horizontal translation $z\mapsto z+a$ for some $a\in\R$. Let $z\mapsto z+b$ be the action of $\beta_{0}$ for some complex number $b$ with nonzero imaginary part. Without loss of generality, we assume that $\im(b)>0$.
		
		For $i=1,2$, let $\Lambda_i$ be the limit set of $G_i$ in $\hat{\C}$, which contains $\infty$. 
		Since $G_i$ is geometrically finite and $\infty$ is a rank-1 parabolic fixed point, by \cite[Remark 1.3.2]{keen1991geometric}, $\infty$ is a doubly-cusped parabolic fixed point. Thus there are two disjoint disks $D^1_i,D^2_i$ in $\hat{\C}$ tangent at $\infty$ such that
		\begin{itemize}
			\item $D_i^1\cup D_i^2$ is precisely invariant under $J=\langle [\gamma_{1,0}]\rangle$ in $G_i$;
			\item $\Lambda_i$ is disjoint from $D_i^1\cup D_i^2$.
		\end{itemize}
		
		Since $[\gamma_{1,0}]$ is a horizontal translation on $\C$, the $\partial D_i^j$ are horizontal lines. Hence $\Lambda_i$ is contained in a closed horizontal strip $P_i$ and $\hat{\C}-P_i=D_i^1\cup D_i^2$ is precisely invariant under $J$ in $G_i$. 
		Moreover, each $\Lambda_i$ is connected, by \cite[Section 2.4]{anderson2000cores}, and $\Lambda_i$ is invariant under the horizontal translation $[\gamma_{1,0}]$. Thus $D^1_i$ and $D^2_i$ lie in different components of $\hat{\C}-\Lambda_i$. Let 
		$D_i^1=\{z\in\C: \im(z)<x_i\}$ and $D_i^2=\{z\in\C: \im(z)>y_i\}$ for some real numbers $x_i<y_i$; then 
		$P_i=\{z\in \mathbb{C}: x_i\leq \mathrm{Im}(z)\leq y_i\}$.
		
		Since $G_2$ is conjugate to $G_1$ by $\beta_0^n$, $\Lambda_2$ is obtained by the action of $\beta_0^n$ on $\Lambda_1$. Furthermore, we can assume that $x_2=x_1+n\cdot\im(b)$ and $y_2=y_1+n\cdot\im(b)$.
		Hence there exists an integer $N_0$ such that when $n>N_0$, we have $x_1<y_1<x_2<y_2$.
		
		Pick a positive integer $n_0>N_0$, and let $S_0$ be the surface obtained by gluing $F_1$ and $F_2^{0,n_0}$ along $\gamma_{1,0}$. We claim that $S_0$ is \poi in $\widehat{M}  $.
		Choose an arbitrary real number $s\in(y_1,x_2)$ and let $l=\{z:\im(z)=s\}$ be a horizontal line. Note that $l$ divides $\hat{\C}$ into two disks $B_1:=\{z\in\C: \im(z)>s\}$ and $B_2:=\{z\in\C: \im(z)<s\}$, and we have $P_1\subset B_2$ and $P_2 \subset B_1$.
		
		For $i\in\{1,2\}$, we verify the requirement of \Cref{1st KCT}. For all $g\in J$, the action of $g$ is a horizontal translation, and thus $g(B_i)=B_i$.
		For $g\in G_i\setminus J$, we assert that $g(B_i) \cap B_i=\varnothing$. 
		Indeed, since $B_i\cap P_i=\varnothing$, we have $B_i\subset D^i_1\cup D^i_2$. Then since $g(D^i_1\cup D^i_2)\cap(D^i_1\cup D^i_2)=\varnothing$, we have $g(B_i)\cap B_i=\varnothing$.
		Therefore, $B_i$ is precisely invariant under $J$ in $G_i$. 
		By \Cref{1st KCT} (Maskit's first Kleinian combination theorem), we obtain a new Kleinian group
		\begin{equation}\label{eq:first-amalg}
			K_0=\langle G_1,G_2\rangle \cong G_1 *_{J} G_2.
		\end{equation}
		in $\pi_1(\widehat{M}  )<\PSL(2,\C)$.
		Moreover, $K_0$ is also geometrically finite.
		By the Seifert–van Kampen theorem, the right-hand side of \eqref{eq:first-amalg} is isomorphic to $\pi_1(S_0)$. Thus $S_0$ is an essential surface with $K_0$ the associated subgroup of $\pi_1(\widehat{M}  )$.
	\end{proof}

If $m=1$, i.e., $F$ only has one boundary component, $S_0$ is a closed surface with negative Euler characteristic. Otherwise, we use induction to keep gluing.

For $1\leq k\leq m-1$, suppose we already have an incompressible surface $S_{k-1}$ with boundary by gluing an annulus $A_j$ to connect $\gamma_{j,1}$ in $F_1$ and $\gamma_{j,2}$ in $F_2$, for all $0\leq j\leq k-1$, and a geometrically finite Kleinian group $K_{k-1}$ associated to $S_{k-1}$. Now we glue an annulus $A_{k}$ connecting $\gamma_{k,1}$ and $\gamma_{k,2}$ on $S_{k-1}$. In $K_0$ (and hence in $K_{k-1}$), 
\begin{equation}
    [\gamma_{k,2}]=\beta_0^{n_0}[\gamma_{k,1}]\beta_0^{-n_0}.
\end{equation}
 Let $J_i=\langle[\gamma_{k,i}]\rangle$ with fixed point $p_i\in\hat{\C}$, $i=1,2$. Here $J_2$ is conjugate to $J_1$ in $\pi_1(\widehat{M}  )$ but not in $K_{k-1}$. For $n\in\Z$, let $f_n=\beta_0^{n_0}\beta_k^{-n}$, and we have
\begin{equation}
    f_n^{-1}\cdot [\gamma_{k,2}]\cdot f_n=\beta_k^{n}\beta_0^{-n_0}\cdot\beta_0^{n_0}[\gamma_{k,1}]\beta_0^{-n_0}\cdot\beta_0^{n_0}\beta_k^{-n}=\beta_k^{n}[\gamma_{k,1}]\beta_k^{-n}=[\gamma_{k,1}]
\end{equation}
since $\gamma_{k,1} \in \langle \alpha_k,\beta_k \rangle$, which is a parabolic subgroup. 
Hence $f_n\cdot J_1\cdot f_n^{-1}=J_2$.

Now we find precisely invariant disks for $J_i$ as follows.
Let $\Omega$ be the domain of discontinuity of $K_{k-1}$. Since $K_{k-1}$ is geometrically finite, $\Sigma=\Omega/K_{k-1}$ is a (possibly disconnected) surface of finite type by Ahlfors finiteness theorem~\cite{alFinitelyGeneratedKleinian}. Let $\pi\from \Omega\to\Sigma$ be the covering map. Since each $p_i$ is a rank-$1$ parabolic fixed point, it is doubly cusped and hence projects to a pair of punctures $\hat{p}_i^j$, $j=1,2$, on $\Sigma$. We claim that the $\hat{p}_i^j$ are all distinct on $\Sigma$. Otherwise, there exists $g\in K_{k-1}$ such that $g(p_1)=p_2$, and then $g$ conjugates $J_1$ to $J_2$. Hence $\gamma_{k,1}$ is freely homotopic to $\gamma_{k,2}$ in the surface $S_{k-1}$, which is impossible.

Since $\Sigma$ is Hausdorff, there exists disjoint open neighborhoods $\hat{U}_i^j$ of $\hat{p}_i^j$. Let $U_i^j\subset\Omega$ be the connected component of $\pi^{-1}(\hat{U}_i^j)$ that contains $p_i$ on its boundary. Then each $U_i^j$ is a disk at $p_i$ precisely invariant under $J_i$ in $K_{k-1}$, and all $U_i^j$ are disjoint. 
By shrinking $\hat{U}_1^j$ and relabeling if necessary, we assume that $\beta_0^{n_0}(U_1^j)\subset U_2^j$, for $j=1,2$. 
Since $[\gamma_{k,1}]$ is not a multiple of $\beta_k$, there exists a positive integer $N_k$ such that when $n>N_k$, $\beta_k^{-n}$ maps the exterior of $U_1^j$ into the interior of $U_1^{j+1}$ for exactly one $j\in\{1,2\}$; here $U_1^3=U_1^1$. Without loss of generality, we assume that $\beta_k^{-n}$ maps  $\hat{\mathbb{C}}-U_1^1$ into the interior of $U_1^2$.
Choose an arbitrary integer $n_k>N_k$. Let $B_1=U_1^1$ and $B_2=f_{n_k}((\overline{U_1^1})^c)=\beta_0^{n_0}\circ \beta_k^{-n_k}((\overline{U_1^1})^c)$. We show that $B_1,B_2$ and $f_{n_k}$ satisfy the requirements of \Cref{2nd KCT}:
\begin{enumerate}[label=(\roman*)]
    \item If there exists $g\in K_{k-1}$ such that $g(B_1)\cap B_2\neq\varnothing$, let $z\in g(B_1)\cap B_2$. Since 
    $$z\in B_2 =\beta_0^{n_0}\circ \beta_k^{-n_k}((\overline{U_1^1})^c)\subset \beta_0^{n_0}(U_1^2)\subset U_2^2,$$ 
    we have $\pi(z)\subset\hat{U}_2^2$. On the other hand, since $z\in g(B_1)$, we have $g^{-1}(z)\in B_1$, and therefore $\pi(z)=\pi(g^{-1}(z))\in\hat{U}_1^1$. This contradicts the assumption that  $\hat{U}_1^1\cap \hat{U}_2^2=\varnothing$. Hence for any $g\in K_{k-1}$, $g(B_1)\cap B_2=\varnothing$.

    \item Since $J_i$ is the parabolic group that fixes $p_i$, each $U_i^j$ is precisely invariant under $J_i$ in $K_{k-1}$, $j=1,2$. Thus $B_1$ is precisely invariant under $J_1$ in $K_{k-1}$. Since $B_2$ is a topological disk contained in $U^2_2$ and $\partial B_2=\beta_0^{n_0}\circ \beta_k^{-n_k}(\partial U_1^1)$ is invariant under $J_2$, $B_2$ is invariant under $J_2$. For $g\in K_{k-1}-J_2$, $g(B_2)\subset g(U^2_2)$, which is disjoint from $U^2_2$. Hence $g(B_2)\cap B_2=\varnothing$.

    \item By definition, we have $f_{n_k}(\partial B_1)=\partial B_2$, and $f_{n_k}$ maps the exterior of $B_1$ onto the interior of $B_2$.
\end{enumerate}
Therefore by \Cref{2nd KCT} (Maskit’s second Kleinian combination theorem), we have
\begin{equation}
    K_k:=\langle K_{k-1}, f_{n_k}\rangle\cong K_{k-1}\ast_{f_{n_k}},
\end{equation}
which is geometrically finite.

Let $A_k$ be an annulus in $T_K$ with boundary curves $\gamma_{k,1}$ and $\gamma_{k,2}$, that wraps the torus $T_K$ of $n_k$ times in the direction of $\beta_k$; here $\gamma_{k,1}$ and $\gamma_{k,2}$ are different boundary curves of $S_{k-1}$, having the same image in $\widehat{M}  $. We then glue $A_k$ to the surface $S_{k-1}$, to connect $\gamma_{k,1}$ and $\gamma_{k,2}$, and obtain a surface $S_k$. Hence $\pi_1(S_k)\cong K_{k-1}\ast_{f_{n_k}}\cong \langle K_{k-1}, f_{n_k}\rangle$ and $S_k$ is incompressible in $\widehat{M}  $.

Since $F$ only has finitely many boundary components, we obtain an incompressible closed surface $S_{m-1}$ in $\widehat{M}  $ with negative Euler characteristic. 
The boundary elements $[\gamma_i]$ of $F$ are all parabolic elements in $\pi_1(\widehat{M})$, hence $S_{m-1}$ is coannular.
Finally, since $F$ is disjoint from some torus boundary component of $\widehat{M}  $, so is $S_{m-1}$.
\end{proof}

We construct infinitely many homotopy classes of coannular surfaces of the same genus in $\widehat{M}$ using the existence of coannular surfaces constructed above.

\begin{defi}[Spin a coannular surface along a boundary torus]\label{def:spin}
    Let $f\from S\to \widehat{M}  $ be a closed, oriented, $\pi_1$-injective surface that is coannular to some toroidal boundary component $T$ with coannular slope $\alpha$.
    We cut $f(S)$ and $T$ along $\alpha$, obtaining two surfaces with boundary, denoted by $(\overline{f(S)}, \alpha_{f}^+, \alpha_{f}^-)$ and $(\overline{T}, \alpha_{T}^+, \alpha_{T}^-)$. We then glue $\overline{f(S)}$ and $\overline{T}$ along their boundaries by identifying $\alpha_{f}^+$ with $\alpha_{T}^-$ and $\alpha_{f}^-$ with $\alpha_{T}^+$ and get a new surface with the same genus.
    We say that the new surface is obtained by \emph{spinning} $f$ \emph{around} $T$ \emph{along} $\alpha$, and denote the resulting map by $f_T^1\from S\to \widehat{M}  $.
\end{defi}

If $f_T^1$ is $\pi_1$-injective, then it has common parabolics as $f$ in $\widehat{M}  $; thus we can spin $f_T^1$ around $T$ along $\alpha$, and obtain $f_T^2$. By iterating, we get a sequence of surfaces $\{f_T^k\}_{k=0}^\infty$ with $f_T^0=f$.
The following lemma proves that, starting from the surface obtained in \Cref{thm6.5}, we can spin this surface along the last torus boundary $T_m$, and get a sequence of non-homotopic $\pi_1$-injective surfaces.

\begin{lem}\label{lemma6.4}
    Suppose $\widehat{M}  $ is a cusped hyperbolic $3$-manifold with at least 3 torus boundary components. Let $f\from S\to \widehat{M}  $ be an incompressible closed surface obtained in \Cref{thm6.5} that is coannular to torus boundary components $T_0,\ldots, T_{m-1}$ of $\widehat{M}  $, and $S$ is disjoint from at least one torus boundary component. Then for any positive integers $k,j$ with $k\ne j$, $f_{T_{m-1}}^k$ is not homotopic to $f_{T_{m-1}}^j$.
\end{lem}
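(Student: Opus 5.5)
The plan is to read off the effect of spinning at the level of the fundamental group, using the description of $S=S_{m-1}$ built in \Cref{thm6.5}. The last step there presents
\[
\pi_1(S)\cong K_{m-2}\ast_{f_{n_{m-1}}},\qquad f_{n_{m-1}}=\beta_0^{n_0}\beta_{m-1}^{-n_{m-1}}.
\]
Spinning around $T_{m-1}$ along the coannular slope $\alpha=\gamma_{m-1}$ cuts $S$ along the annulus curve and reinserts a copy of $T_{m-1}$ cut along $\alpha$. This changes only how many times the annulus $A_{m-1}$ wraps $T_{m-1}$ in the $\beta_{m-1}$ direction. I will first check, by following a loop through the annulus, that $f^k_{T_{m-1}}$ is exactly the surface $S_{m-1}$ built with $n_{m-1}$ replaced by $n_{m-1}+k$ (or $n_{m-1}-k$, depending on orientation). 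In particular, $f^k_{T_{m-1}}$ induces
\[
\rho_k\colon \pi_1(S)\to\Gamma,\qquad \rho_k|_{K_{m-2}}=\mathrm{id},\qquad \rho_k(t)=\beta_0^{n_0}\beta_{m-1}^{-(n_{m-1}+k)},
\]
where $t$ is the stable letter. Since $n_{m-1}+k>N_{m-1}$, the argument of \Cref{thm6.5} applies verbatim, so every $f^k_{T_{m-1}}$ is $\pi_1$-injective with image $K^{(k)}=\langle K_{m-2},\rho_k(t)\rangle$.

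Next, I will suppose $f^k_{T_{m-1}}$ and $f^j_{T_{m-1}}$ are homotopic with $k\ne j$. Then there are a homeomorphism $q$ and an element $g\in\Gamma$ such that
\[
\rho_j\circ q_*=c_g\circ\rho_k .
\]
From this I aim for a contradiction by comparing the images. One route is to show that $K^{(k)}$ and $K^{(j)}$ are not conjugate in $\Gamma$. A cleaner route, which I would try first, uses the free homotopy class of a single curve. Choose a loop $\delta$ on $S$ that crosses the annulus $A_{m-1}$ exactly once: it runs from $\gamma_{m-1,1}$ through $F_1$ and $F_2$ and back through the annulus. Its image under the $k$-th spin is $w\,\beta_0^{n_0}\beta_{m-1}^{-(n_{m-1}+k)}$ with $w\in K_{m-2}$ fixed. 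The same conjugacy class cannot arise from two different exponents, because the complex translation lengths of $w\beta_0^{n_0}\beta_{m-1}^{-n}$ grow roughly like $\log|n|$. This growth comes from the standard estimate for a product containing a large parabolic power, so these classes are pairwise distinct for large $n$.

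The difficulty is that $q$ need not preserve $\delta$. I will therefore use a mapping-class-independent invariant: the set of conjugacy classes in $\Gamma$ of the images of all elements of $\pi_1(S)$ of bounded word length, or more robustly the subgroup $K^{(k)}$ up to $\Gamma$-conjugacy. To separate these subgroups, I will use the Maskit combination structure. The parabolic elements of $K^{(k)}$ that lie in the cusp group $\langle\alpha_{m-1},\beta_{m-1}\rangle$ are, up to conjugacy in $K^{(k)}$, only $J_1$ and $J_2$, and the pair of cusp points corresponding to them differs by an element whose $\beta_{m-1}$-exponent is $n_{m-1}+k$. A conjugation $c_g$ taking $K^{(k)}$ to $K^{(j)}$ must carry the pair of $T_{m-1}$-coannular cusps to the corresponding pair, and so must preserve this exponent modulo the finite ambiguity coming from $K_{m-2}$. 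That forces $k=j$.

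I expect the main obstacle to be making this last invariant rigorous, namely showing that the "relative height" of the last annulus is a conjugacy invariant of the subgroup. This rests on the fact that $J_1$ and $J_2$ are not conjugate in $K_{m-2}$, which was shown in \Cref{thm6.5}, and on controlling possible conjugating elements of the cusp stabilizer. If this proves too delicate, I would fall back on the translation-length argument. That would give non-homotopy for all but finitely many pairs, which already suffices for \Cref{thm:coannular}.
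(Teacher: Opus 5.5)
Your first step, recognizing $f^k_{T_{m-1}}$ as the surface of Theorem~\ref{thm6.5} with the last height shifted by $k$ so that $\pi_1$-injectivity persists, is essentially what the paper does. The gap is in the step that actually proves the lemma: you never produce an invariant that is insensitive to the reparametrization $q$.

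\emph{The single-curve route and the fallback.} The single-curve argument fails for the reason you note, and so does your fallback. Knowing that $\rho_k(\delta)$ and $\rho_j(\delta)$ are not conjugate says nothing about whether $c_g\circ\rho_k=\rho_j\circ q_*$ for some other $q$. So it does not give non-homotopy even for all but finitely many pairs.

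\emph{The ``relative height'' invariant.} It is not well defined as you describe it.
\begin{enumerate}
\item Since $\beta_{m-1}$ fixes $p_1$, one has $p_2=\rho_k(t)(p_1)=\beta_0^{n_0}(p_1)$ for every $k$. So the pair of cusp points carries no information about $k$; the $k$-dependence sits only in which element of the coset $\beta_0^{n_0}\,\mathrm{Stab}_\Gamma(p_1)$ lies in $K^{(k)}$.
\item In $K^{(k)}$ the groups $J_1$ and $J_2$ are conjugate by the stable letter.
\item Other boundary curves of $F$ may lie on $T_{m-1}$, since the $T_j$ need not be distinct.
\item A conjugator $g$ may be any element of the rank-$2$ cusp stabilizer, or may come from a $q$ that permutes the annulus cores or exchanges $F_1$ and $F_2$.
\end{enumerate}
The ``finite ambiguity coming from $K_{m-2}$'' is asserted, not proved. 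Controlling all such $g$ would require a real analysis (normalizers and commensurators of $H$ in $\Gamma$, and so on) that the proposal does not supply.

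\emph{The missing idea.} Use an invariant that ignores $q$ automatically: the class $f_*[S]\in H_2(\widehat M;\Z)$. If $f^j\circ q\simeq f^k$ for a homeomorphism $q$, then $f^k_*[S]=\pm f^j_*[S]$. As a $2$-chain, one spin cuts $S$ along $\alpha$ and inserts $T_{m-1}$ cut open along $\alpha$. So, with a fixed orientation convention, $[f^k_{T_{m-1}}(S)]=[S]+k[T_{m-1}]$.

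Now use the exact sequence $H_3(\widehat M,\partial\widehat M)\to H_2(\partial\widehat M)\to H_2(\widehat M)$. The kernel of the second map is generated by $[\partial\widehat M]=\sum_i[T_i]$. Hence when $\widehat M$ has at least two boundary tori, $(k-j)[T_{m-1}]\neq 0$ in $H_2(\widehat M;\Z)$ for $k\neq j$. This is the paper's proof, and it needs none of the Kleinian-group bookkeeping.

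If $q$ is allowed to reverse orientation, you must also exclude $[S]+k[T_{m-1}]=-([S]+j[T_{m-1}])$. This rules out at most one $j$ for each $k$, which is harmless for Theorem~\ref{thm:coannular}.
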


\begin{proof}
    By the proof of \Cref{thm6.5}, we know that spinning around the last torus boundary component $T_{m-1}$ will always generate incompressible surfaces. Hence it suffices to show that they are pairwise non-homotopic.

    Since $\widehat{M} $ has at least 3 torus boundary components, for each $i$, the torus boundary component $[T_i]$ is nonzero in $H_2(\widehat{M}  ;\mathbb{Z})$. The spin applied to $S$ around $T_{m-1}$ at the level of $H_2$ amounts to $[S]+[T_{m-1}]$.
    By $[T_{m-1}]\neq0$, when $k\neq j$, 
    $$[f_{T_{m-1}}^k(\tilde{S})]=[\tilde{S}]+k[T_{m-1}]\neq [\tilde{S}]+j[T_{m-1}]=[f_{T_{m-1}}^j(\tilde{S})],$$ 
    which implies that $f_{T_{m-1}}^k$ and $f_{T_{m-1}}^j$ are not homotopic. 
\end{proof}

\begin{proof}[Proof of Theorem \ref{thm:coannular}.]
By \Cref{lem:clrlemma2.1}, we first pass to a finite cover $\widehat{M}  $ of $M$ so that $\widehat{M}  $ has at least $3$ torus boundary components. 
By \Cref{thm6.5} and \Cref{lemma6.4}, we have a sequence of incompressible closed coannular surfaces $f^i\from S\rightarrow \widehat{M}$ for $i\geq 0$ such that $f^k$ and $f^j$ are not homotopic whenever $k\ne j$.

Since $\widehat{M}  $ is a finite cover of $M$, only finitely many distinct conjugacy classes in $\pi_1(\widehat{M})$ project down to the same conjugacy class in $\pi_1(M)$. Therefore, we obtain infinitely many coannular surface subgroups of genus $g$ up to conjugacy in $\pi_1(M)$.
\end{proof}

\begin{rem}
The number of embedded essential closed surfaces of bounded genus up to homotopy in a cusped hyperbolic $3$-manifold is finite, which follows from a combination of [\citenum{tjIsotopyIncompressible}, Theorem 4.5]  and [\citenum{dgrCountingEssential}, Lemma 4.4]. 

In \cite{ptPolynomialsBoundsCusped}, Purcell and Tsvietkova gave explicit polynomial bounds for the number of \poi embedded surfaces, possibly with boundary, in alternating link complements. We refer to \cite{dgrCountingEssential,ptPolynomialsBoundsCusped}
and references therein for related work on the number of \poi embedded surfaces in $3$-manifolds. 
\end{rem}